\newtheorem{theorem}{Theorem}[section]
\newtheorem{corollary}[theorem]{Corollary}
\newtheorem{lemma}[theorem]{Lemma}
\newtheorem{proposition}[theorem]{Proposition}
\newtheorem*{lemma*}{Lemma}
\newtheorem*{proposition*}{Proposition}
\newtheorem*{theorem*}{Theorem}
\newtheorem*{corollary*}{Corollary}
\newtheorem*{claim*}{Claim}
\theoremstyle{definition}
\newtheorem{definition}[theorem]{Definition}
\newtheorem{example}[theorem]{Example}
\newtheorem*{definition*}{Definition}
\newtheorem{remark}[theorem]{Remark}
\newtheorem{question}[theorem]{Question}
\newcommand{\im}{\operatorname{im}}
\newcommand{\Homeo}{\operatorname{Homeo}}
\newcommand{\Ab}{\operatorname{Ab}}
\newcommand{\res}{\operatorname{res}}
\newcommand{\trans}{\operatorname{trans}}
\newcommand{\id}{\operatorname{id}}
\newcommand{\N}{\mathbb{N}}
\newcommand{\R}{\mathbb{R}}
\newcommand{\HH}{\operatorname{H}}
\newcommand{\Ha}{\operatorname{H}_a}
\newcommand{\defe}{\operatorname{def}}
\newcommand{\dist}{\operatorname{dist}}
\newcommand{\Uf}{\mathfrak{U}}
\newcommand{\uf}{\mathfrak{u}}
\newcommand{\hR}{{}^{*}\mathbb{R}}
\newcommand{\hRb}{{}^{*}\mathbb{R}_b}
\newcommand{\hRinf}{{}^{*}\mathbb{R}_{inf}}
\newcommand{\hGamma}{{}^{*}\Gamma}
\newcommand{\hLambda}{{}^{*}\Lambda}
\newcommand{\hS}{{}^{*}S}
\newcommand{\V}{\mathcal{V}}
\newcommand{\Vb}{\mathcal{V}_b}
\newcommand{\Vinf}{\mathcal{V}_{inf}}
\newcommand{\Vtilde}{\tilde{\mathcal{V}}}
\newcommand{\W}{\mathcal{W}}
\newcommand{\Wtilde}{\tilde{\mathcal{W}}}
\newcommand{\LL}{\mathcal{L}^\infty}
\newcommand{\LLb}{\mathcal{L}^\infty_b}
\newcommand{\LLinf}{\mathcal{L}^\infty_{inf}}
\newcommand{\LLtilde}{\tilde{\mathcal{L}}^\infty}
\newcommand{\eps}{\varepsilon}
\begin{document}

\title{Ulam stability of lamplighters and Thompson groups}
\author{Francesco Fournier-Facio and Bharatram Rangarajan}
\date{\today}
\maketitle

\begin{abstract}
We show that a large family of groups is uniformly stable relative to unitary groups equipped with submultiplicative norms, such as the operator, Frobenius, and Schatten $p$-norms. These include lamplighters $\Gamma \wr \Lambda$ where $\Lambda$ is infinite and amenable, as well as several groups of dynamical origin such as the classical Thompson groups $F, F', T$ and $V$. We prove this by means of vanishing results in asymptotic cohomology, a theory introduced by the second author, Glebsky, Lubotzky and Monod, which is suitable for studying uniform stability. Along the way, we prove some foundational results in asymptotic cohomology, and use them to prove some hereditary features of Ulam stability. We further discuss metric approximation properties of such groups, taking values in unitary or symmetric groups.
\end{abstract}

\section{Introduction}

Let $\Gamma$ be a countable discrete group, and let $\Uf$ be a family of finite-dimensional unitary groups. The problem of stability asks whether every almost-homomorphism $\Gamma \to U \in \Uf$ is close to a homomorphism. To formalize this we need to choose a norm, and a way to interpret these approximate notions. We focus on the classical setting of \emph{uniform} defects and distances, with respect to \emph{submultiplicative} norms.

Let $\Uf := \{ (U(k), \| \cdot \|) \}$ be a family of finite-dimensional unitary groups equipped with bi-invariant submultiplicative norms $\| \cdot \|$ (we allow $U(k)$ to appear multiple times with different norms). For instance $\| \cdot \|$ could be the operator norm - the most classical case - or more generally a Schatten $p$-norm.
Given a map $\phi : \Gamma \to U(k)$, we define its \emph{defect} to be
$$\defe(\phi) := \sup\limits_{g, h \in \Gamma} \| \phi(gh) - \phi(g) \phi(h)\|.$$
Given another map $\psi : \Gamma \to U(k)$, we define the \emph{distance} between them to be
$$\dist(\phi, \psi) := \sup\limits_{g \in \Gamma} \| \phi(g) - \psi(g) \|.$$

\begin{definition}
A \emph{uniform asymptotic homomorphism} is a sequence of maps $\phi_n : \Gamma \to U(k_n)$ such that $\defe(\phi_n) \to 0$. We denote this simply by $\phi : \Gamma \to \Uf$.
We say that $\phi, \psi : \Gamma \to \Uf$ are \emph{uniformly asymptotically close} if they have the same range degrees and $\dist(\phi_n, \psi_n) \to 0$.

The group $\Gamma$ is \emph{uniformly $\Uf$-stable} if every uniform asymptotic homomorphism is uniformly asymptotically close to a sequence of homomorphisms.
\end{definition}

We can also talk quantitatively about stability, by asking how close a homomorphism we can choose, in terms of the defect. This leads to the notion of \emph{stability with a linear estimate}, which will be relevant for us and which we define precisely in Section \ref{ss:preli:stab}. \\

Early mentions of similar problems can be found in the works of von Neumann \cite{vN} and Turing \cite{turing}. In \cite[Chapter 6]{ulam} Ulam discussed more general versions of stability, which has since inspired a large body of work. Uniform $\Uf$-stability has been studied mostly when $\Uf$ is the family of unitary groups equipped with the operator norm, for which the notion is typically referred to as \emph{Ulam stability}. In this contest, Kazhdan proved stability of amenable groups \cite{kazhdan}, while Burger, Ozawa and Thom proved stability of certain special linear groups over $S$-integers, and instability of groups admitting non-trivial quasimorphisms \cite{BOT}.

More recently, the second author, Glebsky, Lubotzky and Monod proved Ulam stability of certain lattices in higher rank Lie groups, with respect to arbitrary submultiplicative norms \cite{mainref}. For the proof, they introduce a new cohomology theory, called \emph{asymptotic cohomology}, and prove that stability is implied by the vanishing of certain asymptotic cohomology classes $\alpha \in \HH^2_a(\Gamma, \V)$. We refer the reader to Section \ref{ss:preli:asy} for the relevant definitions. \\

The goal of this paper is to further the understanding of asymptotic cohomology, and apply this to prove new stability results. The main one is the stability of the classical Thompson groups:

\begin{theorem}[Section \ref{s:thompson}]
\label{intro:thm:F}

Thompson's groups $F, F', T$ and $V$ are uniformly $\Uf$-stable, with a linear estimate.
\end{theorem}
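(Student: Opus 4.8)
The plan is to derive Theorem~\ref{intro:thm:F} from a vanishing theorem in asymptotic cohomology. By \cite{mainref}, a uniform asymptotic homomorphism $\phi : \Gamma \to \Uf$ determines a class $\alpha(\phi) \in \HH^2_a(\Gamma, \V_\phi)$ in a dual asymptotic coefficient $\Gamma$-module $\V_\phi$ built from $\phi$, and $\phi$ is uniformly asymptotically close to a sequence of homomorphisms whenever $\alpha(\phi) = 0$; the quantitative form of this criterion yields stability with a linear estimate. So it is enough to prove that $\HH^2_a(\Gamma, \V) = 0$ --- and I will aim for $\HH^n_a(\Gamma, \V) = 0$ in all degrees $n \geq 1$ --- for every dual asymptotic coefficient $\Gamma$-module $\V$, when $\Gamma$ is any of $F'$, $F$, $T$ and $V$.

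I would begin with $F'$, $T$ and $V$, which act faithfully on the interval, the circle and the Cantor set. In each case any finitely generated subgroup can be conjugated off itself inside the group, producing commuting conjugates of the kind used to prove that these groups are boundedly acyclic (Monod--Nariman, Fournier-Facio--Lodha). The technical core would be an asymptotic-cohomology analogue of that phenomenon: a group acting on a set with such a displacement property satisfies $\HH^n_a(-, \V) = 0$ for all $n \geq 1$ and every dual $\V$. The proof would follow the Mather-type swindle familiar from bounded cohomology --- using the displacement data to split a cocycle and to build a contracting homotopy --- but now with cochains valued in ultraproducts of matrix algebras. The delicate point is that the infinitely iterated constructions of the bounded setting have to be replaced by internal, hyperfinite ones, or shown to descend to the ultraproduct; this is where I expect the real work to be, and presumably where the foundational results in asymptotic cohomology advertised in the abstract are needed.

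For $F$, which is not perfect, I would instead exploit the short exact sequence $1 \to F' \to F \to \mathbb{Z}^2 \to 1$ arising from the germ homomorphisms at the two endpoints. Since $\mathbb{Z}^2$ is amenable, its asymptotic cohomology vanishes in all positive degrees with arbitrary dual coefficients --- the asymptotic form of Kazhdan's theorem from \cite{mainref} --- and the previous step gives $\HH^n_a(F', \V) = 0$ for $n \geq 1$. Feeding these into the Lyndon--Hochschild--Serre machinery for asymptotic cohomology --- in this range the five-term inflation--restriction sequence suffices, which I would record among the foundational results --- yields $\HH^2_a(F, \V) = 0$ for every dual $\V$.

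Combining the four cases with the criterion of \cite{mainref} gives uniform $\Uf$-stability with a linear estimate for $F'$, $F$, $T$ and $V$. The main obstacle, to reiterate, is the swindle of the second paragraph: transporting an argument that is natively about infinite sums and infinite composites of disjointly supported data into a world where the coefficients are ultraproducts and the admissible operations are the internal ones. Assembling the supporting apparatus --- the inflation--restriction sequence, the behaviour of $\HH_a$ under restriction and induction, and amenable vanishing with general dual coefficients --- is the other substantial ingredient.
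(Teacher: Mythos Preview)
Your strategy for $T$ and $V$ has a genuine gap: the target statement $\HH^2_a(T, \V) = 0$ for all dual $\V$ is \emph{false}. The central extension $1 \to \mathbb{Z} \to \widetilde{T} \to T \to 1$ coming from the lift to $\mathbb{R}$ carries a nontrivial quasimorphism (the translation number on $\widetilde{T}$), so $\widetilde{T}$ is not uniformly $U(1)$-stable and hence $\HH^2_a(\widetilde{T}, \hR) \neq 0$; the mapping theorem for amenable kernels (your own inflation--restriction input) then forces $\HH^2_a(T, \hR) \neq 0$. So no displacement swindle can produce the vanishing you aim for, and the analogy with Monod--Nariman breaks: bounded acyclicity of $T$ with trivial real coefficients does \emph{not} transfer to asymptotic cohomology. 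The paper accordingly does not argue via $\HH^2_a$-vanishing for $T$ or $V$ at all; instead it uses a bounded-generation trick in the spirit of Burger--Ozawa--Thom: $T$ is boundedly generated by conjugates of germ stabilizers isomorphic to $F'$, every homomorphism $F' \to U(n)$ is trivial (simplicity plus non-linearity), and $F'$ is already known to be stable, so any uniform asymptotic homomorphism of $T$ is close to the trivial one. The same scheme handles $V$.

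For $F'$ and $F$ your outline is closer in spirit, but the route differs and your coefficient hypothesis is too generous. The paper does not attempt a direct Mather swindle in asymptotic cohomology; it proves vanishing for lamplighters $\Gamma \wr \Lambda$ with $\Lambda$ infinite amenable via a multiply ergodic Zimmer-amenable space (the Bernoulli shift), and this step genuinely requires the coefficients to be \emph{finitary} (ultraproducts of finite-dimensional, or at least separable, spaces) --- the result fails for general dual modules. One then passes to $F'$ by observing that $F'$ contains a coamenable subgroup that is a quotient of $F' \wr \mathbb{Z}$ by a metabelian kernel, and uses injectivity of restriction to coamenable subgroups together with the pullback isomorphism for amenable kernels. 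Your $F$ step via the extension $1 \to F' \to F \to \mathbb{Z}^2 \to 1$ is fine and essentially equivalent to the paper's use of coamenability of $F'$ in $F$; but note that the five-term sequence is not needed --- injectivity of restriction along a coamenable inclusion already does the job.
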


As remarked by Arzhantseva and P\u{a}unescu \cite[Open problem]{arzhantsevapaunescu}, the analogous statement for pointwise stability in permutation of $F$ would imply that $F$ is not sofic, thus proving at once the existence of a non-sofic group and the non-amenability of $F$: two of the most remarkable open problems in modern group theory. We will discuss these problems and their relation to our results in Section \ref{s:last}. \\

Theorem \ref{intro:thm:F} for $F$ and $F'$ will follow from a stability result for certain \emph{lamplighters}. Given groups $\Gamma, \Lambda$, the corresponding lamplighter (or \emph{restricted wreath product}) is the group $\Gamma \wr \Lambda = (\oplus_\Lambda \Gamma)\rtimes \Lambda$, where $\Lambda$ acts by shifting the coordinates.

\begin{theorem}
\label{intro:thm:lamplighters}

Let $\Gamma, \Lambda$ be two countable groups, where $\Lambda$ is infinite and amenable. Then $\Gamma \wr \Lambda$ is uniformly $\Uf$-stable, with a linear estimate.
\end{theorem}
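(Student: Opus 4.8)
The plan is to leverage the machinery from \cite{mainref}: it suffices to show that $\HH^2_a(\Gamma \wr \Lambda, \V)$ vanishes for every coefficient module $\V$ arising from a uniform asymptotic homomorphism, and moreover that the vanishing comes with the right quantitative control to yield a linear estimate. So the real content is a vanishing statement in asymptotic cohomology for lamplighters $\Gamma \wr \Lambda$ with $\Lambda$ infinite amenable. I would structure the argument around the semidirect product decomposition $\Gamma \wr \Lambda = N \rtimes \Lambda$ with $N = \oplus_\Lambda \Gamma$, and try to run a Lyndon--Hochschild--Serre-type spectral sequence, or more hands-on, a two-step argument: first kill cohomology in the $N$-direction, then in the $\Lambda$-direction.

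The key steps, in order: (1) Establish the relevant foundational tools for asymptotic cohomology — a version of the LHS spectral sequence, or at least the inflation-restriction exact sequence in low degrees, together with the fact that amenable groups have vanishing asymptotic cohomology in positive degrees with any dual coefficients (this is the asymptotic analogue of Kazhdan/Johnson, presumably proved earlier in the paper). (2) Show that $\HH^\bullet_a(N, \V) $ can be computed as a limit/colimit over the finite sub-sums $\oplus_{F} \Gamma$ for $F \subseteq \Lambda$ finite, and that because $\Lambda$ is infinite the shift action makes the relevant invariants/coinvariants collapse — this is the ``infinite lamplighter trick'': any cocycle on $N$ is, after shifting by a group element supported far away, cohomologous to one supported on a translate, and averaging/telescoping over an infinite orbit forces it to be a coboundary. (3) Feed the vanishing over $N$ into the spectral sequence so that $\HH^2_a(\Gamma \wr \Lambda, \V)$ is controlled by $\HH^2_a(\Lambda, \V^N)$ and $\HH^1_a(\Lambda, \HH^1_a(N,\V))$, both of which vanish: the former because $\Lambda$ is amenable, the latter because $\HH^1_a(N,\V)=0$ from step (2). (4) Track constants throughout to ensure the estimate is linear in the defect, invoking the quantitative form of the main criterion from \cite{mainref}.

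I expect the main obstacle to be step (2): making precise the sense in which an asymptotic $2$-cocycle on the infinite direct sum $N = \oplus_\Lambda \Gamma$ is an asymptotic coboundary, uniformly. Unlike ordinary group cohomology, here one is working with the asymptotic (ultraproduct) coefficient modules, so the ``telescoping over an infinite orbit'' must be carried out internally in ${}^*\R$ and the resulting primitive must have bounded norm — one cannot naively sum infinitely many pieces. The trick will presumably be that the shift action of the infinite group $\Lambda$ provides, for each finite-support configuration, an element moving its support off itself, and a Følner/amenability argument on $\Lambda$ lets one average these displacements; the subtlety is uniformity of the bounds as the support grows. A secondary obstacle is verifying that the spectral sequence or inflation-restriction sequence is available in the asymptotic setting with the needed exactness and norm control; if the full spectral sequence is not developed, I would instead argue directly with explicit cocycle formulas, decomposing a $2$-cocycle on $N \rtimes \Lambda$ along the factors and reducing to the two vanishing inputs by hand. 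Once these are in place, the passage to Thompson's $F$ and $F'$ in Theorem~\ref{intro:thm:F} is by realizing them (or finite-index/quotient-related groups) via the lamplighter result plus the hereditary properties of Ulam stability the paper establishes.
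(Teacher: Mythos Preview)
Your approach is genuinely different from the paper's, and there is a real gap in step (2). The paper does \emph{not} use the semidirect product decomposition or any LHS-type argument. Instead, following Monod's bounded-cohomology proof \cite{monod:lamplighters}, it exhibits the Bernoulli space $S = \Gamma^\Lambda$ (with a product measure) as a Zimmer-amenable $(\Gamma \wr \Lambda)$-space, so that by Theorem~\ref{thm:zimmer} the complex $\LLtilde((\hS)^\bullet,\V)^{{}^*(\Gamma\wr\Lambda)}$ computes $\Ha^\bullet(\Gamma\wr\Lambda,\V)$. The point is then that the $(\Gamma\wr\Lambda)$-action on $S^m$ is ergodic for \emph{every} $m\geq 1$ (Kolmogorov's zero--one law, using that $\Lambda$ is infinite); an approximate, ultraproduct version of ``ergodic with separable coefficients'' (Lemmas~\ref{lem:approx:ergodicity}--\ref{lem:approx:doubleergodicity}, Proposition~\ref{prop:doubleergodicity:constant}) then forces each $\LLtilde((\hS)^m,\V)^{{}^*(\Gamma\wr\Lambda)}$ to reduce to $\Vtilde^{{}^*(\Gamma\wr\Lambda)}$, and the resulting constant complex has vanishing cohomology in positive degrees. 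The separability of each $V_n$ (guaranteed by the finitary hypothesis) is used exactly here, in the countable-cover argument of Lemma~\ref{lem:approx:doubleergodicity}; this is why the theorem fails for infinite-dimensional targets (Remark~\ref{rem:sharp:lamplighters}).

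The gap in your plan is the claim $\HH^1_a(N,\V)=0$ for $N=\oplus_\Lambda\Gamma$. Your ``shift trick'' conjugates by elements of $\Lambda$, which act by \emph{outer} automorphisms on $N$; this does not produce cohomologous cocycles in $\HH^\bullet_a(N,\V)$, only cocycles whose classes are related by the $\Lambda$-action. In fact the vanishing of $\HH^q_a(N,\V)$ itself is false in general: $N$ retracts onto each copy of $\Gamma$, so when $\Gamma$ is, say, a free group, classes pull back nontrivially. What one could hope to salvage is that the $\Lambda$-\emph{invariants} in $\HH^q_a(N,\V)$ vanish, but your telescoping sketch does not establish this either, and in any case feeding that into a spectral sequence requires an LHS machine for asymptotic cohomology that is not available in \cite{mainref} and whose coefficient modules $\HH^q_a(N,\V)$ are not obviously dual asymptotic Banach modules of the required form. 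A further warning sign: your outline never invokes the finitary hypothesis on $\V$, yet Remark~\ref{rem:sharp:lamplighters} shows the conclusion is false without it.
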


By itself, Theorem \ref{intro:thm:lamplighters} provides a plethora of examples of uniformly $\Uf$-stable groups, to a degree of flexibility that was not previously available. For instance, using classical embedding results \cite{embedding} it immediately implies the following:

\begin{corollary}
\label{intro:cor:many}

Every countable group embeds into a $3$-generated group which is uniformly $\Uf$-stable, with a linear estimate.
\end{corollary}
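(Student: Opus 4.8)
The plan is to combine Theorem \ref{intro:thm:lamplighters} with the classical Higman--Neumann--Neumann embedding theorem \cite{embedding}, which asserts that every countable group embeds into a group generated by two elements. The only instance of Theorem \ref{intro:thm:lamplighters} needed here is the case $\Lambda = \mathbb{Z}$, which is infinite and amenable.

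Concretely, let $G$ be a countable group. First I would invoke \cite{embedding} to embed $G$ into a $2$-generated group $H = \langle a, b \rangle$. Next, form the lamplighter $H \wr \mathbb{Z} = (\oplus_{\mathbb{Z}} H) \rtimes \mathbb{Z}$ and observe that it is generated by three elements, namely the images of $a$ and $b$ under the inclusion of $H$ into the coordinate $0$ of $\oplus_{\mathbb{Z}} H$, together with a generator $t$ of the acting copy of $\mathbb{Z}$: conjugating the coordinate-$0$ copy of $H$ by powers of $t$ produces the copy of $H$ in every coordinate, hence all of $\oplus_{\mathbb{Z}} H$, while $t$ itself surjects onto the quotient $\mathbb{Z}$. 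Since $\mathbb{Z}$ is infinite and amenable, Theorem \ref{intro:thm:lamplighters} applies and shows that $H \wr \mathbb{Z}$ is uniformly $\Uf$-stable with a linear estimate. Finally, composing the embedding $G \hookrightarrow H$ with the inclusion of $H$ as the coordinate-$0$ copy in $H \wr \mathbb{Z}$ exhibits $G$ as a subgroup of the $3$-generated, uniformly $\Uf$-stable group $H \wr \mathbb{Z}$, which is exactly the claimed statement.

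There is essentially no obstacle here: the argument is a direct combination of an off-the-shelf embedding theorem with Theorem \ref{intro:thm:lamplighters}, and in particular it does not require knowing that uniform $\Uf$-stability is inherited by subgroups (it is not, in general — we only need the \emph{ambient} $3$-generated group to be stable). The one point worth spelling out is the generator count for $H \wr \mathbb{Z}$, which is the elementary observation above; more generally, starting from a $k$-generated overgroup of $G$ yields a $(k+1)$-generated uniformly $\Uf$-stable overgroup, and $k = 2$ is the best one can do by this route, giving the bound $3$.
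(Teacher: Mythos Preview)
Your proposal is correct and follows exactly the approach indicated in the paper: the paper simply states that the corollary follows immediately from Theorem \ref{intro:thm:lamplighters} together with the classical embedding result \cite{embedding}, and your argument spells out precisely this, including the elementary verification that $H \wr \mathbb{Z}$ is $3$-generated when $H$ is $2$-generated.
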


In particular, this gives a proof that there exist uncountably many finitely generated uniformly $\Uf$-stable groups, a fact which could also be obtained by applying Kazhdan's Theorem \cite{kazhdan} to an infinite family of finitely generated amenable groups, such as the one constructed by B. H. Neumann \cite{BH}. \\

In order to obtain stability of $F$ and $F'$ from Theorem \ref{intro:thm:lamplighters}, we exploit \emph{coamenability}. Recall that a subgroup $\Lambda \leq \Gamma$ is coamenable if the coset space $\Gamma / \Lambda$ admits a $\Gamma$-invariant mean. It is well known that $F'$ and $F$ contain a coamenable lamplighter $F \wr \mathbb{Z}$. Therefore the stability of $F$ and $F'$ (Corollary \ref{cor:F}) follows from Theorem \ref{intro:thm:lamplighters}, and the following result:

\begin{proposition}
\label{intro:prop:coamenable}

Let $\Lambda \leq \Gamma$ be coamenable. If $\Lambda$ is uniformly $\Uf$-stable with a linear estimate, then so is $\Gamma$.
\end{proposition}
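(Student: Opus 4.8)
The plan is to pass to asymptotic cohomology. Let $\phi : \Gamma \to \Uf$ be a uniform asymptotic homomorphism; we must show it is uniformly asymptotically close to a sequence of homomorphisms, and quantitatively so. Following \cite{mainref} (see Section~\ref{ss:preli:asy}), $\phi$ determines a ``defect class'' $[\phi] \in \HH^2_a(\Gamma, \V_\phi)$, assembled from the defects $\defe(\phi_n) \to 0$, where $\V_\phi$ is a coefficient module built from the ultraproduct of the matrix algebras $M_{k_n}(\mathbb{C})$ (with the adjoint action twisted by $\phi$); the vanishing of $[\phi]$ implies that $\phi$ is uniformly asymptotically close to homomorphisms, with a linear estimate. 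Restricting to $\Lambda$ one has $\res^\Gamma_\Lambda [\phi] = [\phi|_\Lambda] \in \HH^2_a(\Lambda, \V_\phi)$; and since $\Lambda$ is uniformly $\Uf$-stable with a linear estimate, $\phi|_\Lambda$ is uniformly asymptotically close to a sequence of homomorphisms, whence $[\phi|_\Lambda] = 0$ (a genuine homomorphism has vanishing defect cocycle, and uniformly asymptotically close maps have cohomologous defect cocycles, the difference being the coboundary of the infinitesimal $1$-cochain $n \mapsto \phi_n(\cdot)\rho_n(\cdot)^{-1}$). Hence it suffices to prove that, for a coamenable subgroup $\Lambda \leq \Gamma$, the restriction map $\res^\Gamma_\Lambda : \HH^2_a(\Gamma, \V) \to \HH^2_a(\Lambda, \V)$ is injective for every coefficient module $\V$ of the relevant type.

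To establish this injectivity I would adapt the classical transfer argument from bounded cohomology: one produces a map $\trans : \HH^\bullet_a(\Lambda, \V) \to \HH^\bullet_a(\Gamma, \V)$ with $\trans \circ \res^\Gamma_\Lambda = \id$. Fix a $\Gamma$-invariant mean $\mathfrak{m}$ on $\ell^\infty(\Gamma/\Lambda)$ and a set-theoretic section $\sigma : \Gamma/\Lambda \to \Gamma$. Given a homogeneous asymptotic $\Lambda$-cochain $c$ with values in $\V$, spread it to a $\Lambda$-equivariant cochain on $\Gamma$ using $\sigma$, and then average over $\Gamma/\Lambda$ against $\mathfrak{m}$ — performed in each of the finitely many matrix entries, or equivalently via a weak-$*$ realization of $\mathfrak{m}$ — to obtain a cochain which is $\Gamma$-equivariant, the $\Gamma$-invariance of $\mathfrak{m}$ erasing the dependence on $\sigma$. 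One checks that $\trans$ is a cochain map, hence descends to cohomology, and that it splits restriction; this gives the injectivity, hence $[\phi] = 0$, hence the Proposition. Since the final conclusion is extracted from the vanishing of the single class $[\phi]$, the quantitative ``vanishing implies stability with a linear estimate'' step of Section~\ref{ss:preli:asy} hands back precisely a linear estimate for $\Gamma$; and since $\trans$ is built from a mean it does not expand norms at the level of representative cochains, so no quantitative deterioration is introduced along the way.

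The main obstacle is making the mean-averaging step legitimate inside the asymptotic-cohomology formalism. The coefficient modules here are not Banach spaces but ``$L^\infty$-type'' modules over the nonstandard reals $\hR$, equipped with an internal/measurable structure over a Loeb-type space; one must therefore verify that the integral of a bounded $\V$-valued function on $\Gamma/\Lambda$ against the finitely additive mean $\mathfrak{m}$ again lies in $\V$ — the analogue, in this setting, of the requirement in bounded cohomology that the coefficients form a dual Banach module, which holds here by the very construction of the modules $\V_\phi$ — and that this averaging is $\hR$-linear and preserves the internal constraints defining asymptotic cochains, so that the output lands in the asymptotic cochain complex rather than in a merely bounded cochain complex. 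Establishing these compatibilities is the real work; once they are in place (as part of the foundational material on asymptotic cohomology developed in the paper), the transfer construction and the deduction above proceed essentially as in the classical case.
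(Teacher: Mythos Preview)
Your proposal is correct and follows essentially the same route as the paper: reduce to vanishing of the Ulam class via Theorem~\ref{thm:UlamviaHa}, observe that restriction sends Ulam classes to Ulam classes (Lemma~\ref{lem:restriction:Ulam}), and prove injectivity of restriction by constructing a transfer map from the invariant mean on $\Gamma/\Lambda$ (this is Proposition~\ref{intro:prop:coamenable:ac}, with the averaging made rigorous in Lemma~\ref{lem:internalmean}). One small slip: the $1$-cochain $\frac{1}{\eps}\log(\phi_n(\cdot)\rho_n(\cdot)^{-1})$ you invoke is \emph{bounded} (since the linear estimate gives $\dist(\phi|_\Lambda,\rho)=O_\omega(\eps)$), not infinitesimal---but bounded is exactly what is needed for it to define a cochain in the quotient complex $\LLtilde$, so your conclusion that $[\phi|_\Lambda]=0$ stands.
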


This can be seen as a relative version of the celebrated result of Kazhdan, stating that amenable groups are uniformly $\Uf$-stable \cite{kazhdan}. To complete the picture, we also prove another relative version of Kazhdan's Theorem, which is sort of dual to Proposition \ref{intro:prop:coamenable}:

\begin{proposition}
\label{intro:prop:mapping}

Let $N \leq \Gamma$ be an amenable normal subgroup. If $\Gamma$ is uniformly $\Uf$-stable with a linear estimate, then so is $\Gamma / N$.
\end{proposition}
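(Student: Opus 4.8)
The plan is to pull everything back to $\Gamma$ along the quotient map $\pi \colon \Gamma \twoheadrightarrow \Gamma/N$, use the assumed stability of $\Gamma$, and then observe that the homomorphisms of $\Gamma$ produced this way must already be trivial on $N$, hence descend to $\Gamma/N$. Concretely, let $\phi = (\phi_n)$ with $\phi_n \colon \Gamma/N \to U(k_n)$ be a uniform asymptotic homomorphism, and set $\tilde\phi_n := \phi_n \circ \pi \colon \Gamma \to U(k_n)$. Since $\pi$ is surjective we have $\defe(\tilde\phi_n) = \defe(\phi_n) \to 0$, so $\tilde\phi := (\tilde\phi_n)$ is a uniform asymptotic homomorphism of $\Gamma$. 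As $\Gamma$ is uniformly $\Uf$-stable with a linear estimate, there are homomorphisms $\rho_n \colon \Gamma \to U(k_n)$ and a constant $C$, independent of $n$, with $\dist(\tilde\phi_n, \rho_n) \le C \, \defe(\phi_n)$ for every $n$ with $\defe(\phi_n)$ below the relevant threshold.

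The next step is to show that $\rho_n$ kills $N$ for all but finitely many $n$. Since $\rho_n$ is a genuine homomorphism, $\rho_n(1) = I$; and because $\tilde\phi_n(1) = \phi_n(1)$ this already gives $\|\phi_n(1) - I\| \le C\,\defe(\phi_n)$. For $g \in N$ we have $\tilde\phi_n(g) = \phi_n(\pi(g)) = \phi_n(1)$, hence
\[
\|\rho_n(g) - I\| \;\le\; \|\rho_n(g) - \tilde\phi_n(g)\| + \|\phi_n(1) - I\| \;\le\; 2C\,\defe(\phi_n).
\]
So $\rho_n(N)$ is a subgroup of $U(k_n)$ contained in the ball of radius $2C\,\defe(\phi_n)$ about $I$. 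Now I would invoke a uniform \emph{no small subgroups} estimate: there is an absolute constant $\eta > 0$ — one may take $\eta = \sqrt 3$ — such that for every $k$, every unitarily bi-invariant submultiplicative norm on $M_k(\mathbb{C})$, and every $u \in U(k)$ with $u \ne I$, one has $\sup_{m \in \mathbb{Z}} \|u^m - I\| \ge \eta$. Granting this, as soon as $2C\,\defe(\phi_n) < \eta$ — which holds for all but finitely many $n$ — the subgroup $\rho_n(N)$ is trivial, so $\rho_n$ factors through $\pi$ as $\rho_n = \bar\rho_n \circ \pi$ for a homomorphism $\bar\rho_n \colon \Gamma/N \to U(k_n)$. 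For such $n$ and any $x \in \Gamma/N$, choosing $g \in \pi^{-1}(x)$ yields $\|\phi_n(x) - \bar\rho_n(x)\| = \|\tilde\phi_n(g) - \rho_n(g)\| \le C\,\defe(\phi_n)$, whence $\dist(\phi_n, \bar\rho_n) \le C\,\defe(\phi_n)$; taking $\bar\rho_n$ trivial for the remaining finitely many $n$ then exhibits $\Gamma/N$ as uniformly $\Uf$-stable with a linear estimate.

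The only step with real content is the uniform no-small-subgroups estimate, and this is where I expect any difficulty to lie. A nontrivial $u \in U(k)$ has an eigenvalue $\lambda \ne 1$, and $\{\lambda^m : m \in \mathbb{Z}\}$ is either the group of $q$-th roots of unity for some $q \ge 2$ or dense in the unit circle; either way some $u^m$ has an eigenvalue at distance $\ge \sqrt 3$ from $1$. Since $u^m - I$ is normal, its value under any unitarily invariant norm is at least $\sqrt 3$ times the norm of a rank-one projection $p$, and submultiplicativity forces $\|p\| \ge 1$ because $p = p^2$; this bound is visibly uniform in $k$ and in the norm. (Amenability of $N$ is not in fact needed along this route; one could alternatively appeal to Kazhdan's theorem to stabilize $\tilde\phi_n|_N$ first, but that reduces to the same input.) Beyond this, the only thing to verify carefully is that the constants $C$, $\eta$ and the threshold are genuinely independent of $n$, which is precisely what the linear estimate and the uniform no-small-subgroups bound above provide.
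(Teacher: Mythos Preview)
Your argument is correct and genuinely different from the paper's. The paper works entirely inside the asymptotic-cohomology machinery: it shows that the pullback $\Ha^2(\Gamma/N,\W)\to\Ha^2(\Gamma,\W)$ is an isomorphism for dual asymptotic ${}^{*}(\Gamma/N)$-modules (Proposition~\ref{intro:prop:mapping:ac}, via the Zimmer-amenability of the discrete $\Gamma$-space $\Gamma/N$), checks that Ulam classes pull back to Ulam classes, and then invokes Theorem~\ref{thm:UlamviaHa} on both sides. Amenability of $N$ is used precisely to make $\Gamma/N$ a Zimmer-amenable $\Gamma$-space, and the payoff is that one obtains the cohomological isomorphism as a result of independent interest. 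Your route, by contrast, is elementary and bypasses cohomology entirely: pull the asymptotic homomorphism back along $\pi$, stabilise in $\Gamma$, and use a uniform no-small-subgroups bound (which, as you correctly argue, holds with constant $\sqrt 3$ for every submultiplicative unitarily invariant norm in every finite dimension) to force the resulting homomorphisms to kill $N$. As you observe, amenability of $N$ plays no role in this argument, so you in fact prove the stronger statement that uniform $\Uf$-stability with a linear estimate passes to \emph{all} quotients; the finite-dimensionality of the targets is what makes the eigenvalue argument available. The only point worth spelling out more carefully is the interpretation of ``linear estimate'': in the paper's ultraproduct formulation the bound $\dist(\tilde\phi_n,\rho_n)\le C\,\defe(\phi_n)$ is guaranteed only on an $\omega$-large set of indices, but this is exactly what you need to conclude the same for $\Gamma/N$, so no harm is done.
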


The fact that Theorem \ref{intro:thm:F} follows from Theorem \ref{intro:thm:lamplighters} and Proposition \ref{intro:prop:coamenable} is not special to Thompson's group $F$: this phenomenon is typical of several groups of piecewise linear and piecewise projective homeomorphisms, which enjoy some kind of self-similarity properties (Theorem \ref{thm:selfsimilar} and Corollary \ref{cor:bsupp}).
Stability of $T$ and $V$ then follow from these results, together with a bounded generation argument analogous to the one from \cite{BOT} (Corollaries \ref{cor:T} and \ref{cor:V}). \\

As we mentioned above, the tool underlying the proofs of Theorem \ref{intro:thm:lamplighters} and Proposition \ref{intro:prop:coamenable} is asymptotic cohomology, in particular the vanishing of certain classes in degree $2$. In this framework, Theorem \ref{intro:thm:lamplighters} takes the following form:

\begin{theorem}
\label{intro:thm:lamplighters:ac}

Let $\Gamma, \Lambda$ be two countable groups, where $\Lambda$ is infinite and amenable. Then $\Ha^n(\Gamma \wr \Lambda, \V) = 0$ for all $n \geq 1$ and all finitary dual asymptotic Banach $\hGamma$-modules $\V$.
\end{theorem}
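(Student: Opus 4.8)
The plan is to prove $\Ha^n(\Gamma \wr \Lambda, \V) = 0$ for all $n \geq 1$ by exploiting the semidirect product structure $\Gamma \wr \Lambda = (\oplus_\Lambda \Gamma) \rtimes \Lambda$ together with the amenability of $\Lambda$. The key idea is that amenability should make asymptotic cohomology behave like bounded cohomology: cohomology of an amenable group with coefficients in a dual (asymptotic Banach) module vanishes in positive degrees, by an averaging argument using an invariant mean on $\Lambda$ applied to the ultrapower. So I would set up a Lyndon--Hochschild--Serre-type spectral sequence (or, more elementarily since we only want a vanishing statement, an inflation-restriction style dévissage) for the extension $1 \to \oplus_\Lambda \Gamma \to \Gamma \wr \Lambda \to \Lambda \to 1$. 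The $E_2$ page reads $\Ha^p(\Lambda, \Ha^q(\oplus_\Lambda \Gamma, \V))$, and the amenability of $\Lambda$ kills all terms with $p \geq 1$, provided the inner cohomology groups $\Ha^q(\oplus_\Lambda \Gamma, \V)$ are again dual asymptotic Banach $\hLambda$-modules (or at least modules to which the vanishing applies). So the whole theorem reduces to showing $\Ha^q(\oplus_\Lambda \Gamma, \V) = 0$ for all $q \geq 1$, i.e.\ the statement for the base group $N := \oplus_\Lambda \Gamma$.

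For the base group $N = \oplus_\Lambda \Gamma$, I would use that $N$ is a directed union of the finite sub-sums $N_S = \oplus_{S} \Gamma$ over finite $S \subseteq \Lambda$, and more importantly that $\Lambda$ acts on $\Lambda$ by translation with the property that any finite $S$ can be pushed off itself. The crucial structural input is that $\V$ is \emph{finitary}: this should mean, roughly, that every asymptotic vector is ``supported'' on a bounded/finite portion of the data, so that cochains on $N$ are controlled by cochains on the finite sub-lamplighters $N_S$. Combined with the infinitude of $\Lambda$, one gets a ``displacement'' or ``flabbiness'' phenomenon: a cocycle on $N$ can be corrected by a coboundary after translating its support far away, using an element of $\Lambda$ that moves $S$ to a disjoint copy, together with the commutativity $[N_S, N_{S'}] = 1$ for $S \cap S' = \emptyset$. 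This is the same mechanism that forces vanishing of bounded cohomology of groups with ``commuting conjugates'' or of mitotic/binate groups — an Eilenberg swindle / Mather-style infinite iteration argument — and I expect it to transfer to asymptotic cohomology once the finitary hypothesis is used to make the infinite sums/limits converge in the internal (nonstandard) sense. Concretely: given an asymptotic $n$-cocycle $\omega$ on $N$, find $t \in \Lambda \wr$-translates whose supports are mutually disjoint, build the formal infinite shift, and show the cocycle is cohomologous to its own shift, hence to zero.

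The main obstacle, I expect, is the foundational bookkeeping rather than any single clever step: specifically, (i) establishing the spectral sequence — or a workable substitute — in asymptotic cohomology, including checking that $\Ha^q$ of a (possibly infinitely generated) group with dual asymptotic Banach coefficients is again a dual asymptotic Banach module with a compatible $\hLambda$-action, so that the amenable-vanishing lemma applies on the outside; and (ii) making precise the role of the finitary hypothesis on $\V$ in the Eilenberg-swindle step for $N = \oplus_\Lambda \Gamma$, since asymptotic cohomology lives in an ultraproduct and one must ensure that the infinite translation-and-sum construction defines a legitimate internal cochain with controlled norm. I anticipate both of these are handled by the ``foundational results in asymptotic cohomology'' advertised in the introduction (an analogue of the fact that cohomology of amenable groups with dual coefficients vanishes, plus a Shapiro/induction lemma), so the proof should consist of: first reduce to $N$ via amenability of $\Lambda$; then prove vanishing for $\oplus_\Lambda \Gamma$ by the disjoint-support swindle using infinitude of $\Lambda$ and finitariness of $\V$; and finally assemble these, noting the linear estimate is automatic from the quantitative/metric nature of the vanishing arguments.
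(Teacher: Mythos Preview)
Your approach is genuinely different from the paper's, and it contains a real gap.

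The paper does \emph{not} proceed via the extension $1 \to N \to \Gamma \wr \Lambda \to \Lambda \to 1$. Instead, following Monod's bounded-cohomology computation, it finds a single Zimmer-amenable probability $(\Gamma \wr \Lambda)$-space, namely the Bernoulli shift $S = \Gamma^{\Lambda}$ with a full-support product measure, and computes $\Ha^\bullet$ via the complex $\LLtilde((\hS)^\bullet, \V)^{{}^*(\Gamma \wr \Lambda)}$ (Theorem~\ref{thm:zimmer}). The Kolmogorov zero--one law makes $S^m$ ergodic for every $m$; an approximate-ergodicity lemma (using that each $V_n$ is separable, which is where finitariness enters) then forces every asymptotically invariant cochain $(\hS)^m \to \V$ to be essentially constant. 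The cochain complex thus collapses to $0 \to \Vtilde^{\hGamma} \to \Vtilde^{\hGamma} \to \cdots$ with differentials alternating between the identity and zero, giving vanishing in all positive degrees.

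The gap in your plan is the claim that the theorem ``reduces to showing $\Ha^q(\oplus_\Lambda \Gamma, \V) = 0$ for all $q \geq 1$.'' This is false already for trivial coefficients. Take $\Gamma = F_2$ and $\Lambda = \mathbb{Z}$: then $F_2$ is a retract of $N = \oplus_{\mathbb{Z}} F_2$, and since $F_2$ admits nontrivial quasimorphisms one has $\Ha^2(F_2, \hR) \neq 0$, hence $\Ha^2(N, \hR) \neq 0$. (The paper itself notes, in Section~\ref{s:sharp}, that $\oplus_n F_n$ is not uniformly $U(1)$-stable.) The Eilenberg-swindle you sketch cannot work on $N$ alone: the translation by $t \in \Lambda$ that disjoins supports is an \emph{outer} automorphism of $N$, so a cocycle being ``cohomologous to its shift'' only says the class is $\Lambda$-invariant, not that it is zero. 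What your spectral sequence (if it existed) would actually give is $\Ha^n(\Gamma \wr \Lambda, \V) \cong \Ha^n(N, \V)^{\Lambda}$, and it is this $\Lambda$-invariant part that must vanish---a genuinely different (and harder) statement than the one you aim for. There are also the foundational issues you flag yourself: a Lyndon--Hochschild--Serre spectral sequence is not established for asymptotic cohomology, and there is no reason for $\Ha^q(N, \V)$, a quotient of a Banach space by a possibly non-closed subspace, to again be a \emph{dual} asymptotic Banach module, which is what the amenable-vanishing step on the $E_2$ page would require.
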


Here the word \emph{finitary} refers to the fact that these modules arise from stability problems with respect to finite-dimensional unitary representations. This hypothesis is crucial: see Remark \ref{rem:sharp:lamplighters}.
Propositions \ref{intro:prop:coamenable} and \ref{intro:prop:mapping} also follow from results in asymptotic cohomology, that this time does not need the finitary assumption:

\begin{proposition}
\label{intro:prop:coamenable:ac}

Let $\Lambda \leq \Gamma$ be coamenable. Then the restriction map $\Ha^n(\Gamma, \V) \to \Ha^n(\Lambda, \V)$ is injective, for all $n \geq 0$ and all dual asymptotic Banach $\hGamma$-modules $\V$.
\end{proposition}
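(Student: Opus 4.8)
The strategy is to reproduce, in the asymptotic framework, the classical argument that coamenable subgroups induce injections in bounded cohomology: we construct a \emph{transfer} map $\trans\colon\Ha^n(\Lambda,\V)\to\Ha^n(\Gamma,\V)$ which splits $\res$ exactly. The \emph{finitary} hypothesis plays no role here; and for $n=0$ the claim is trivial, since $\res$ is then the inclusion $\V^{\hGamma}\hookrightarrow\V^{\hLambda}$, so the content lies in degrees $n\ge 1$.

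The first step is to transport coamenability to the ultrapower. Coamenability of $\Lambda\le\Gamma$ furnishes a $\Gamma$-invariant mean $m\in\ell^\infty(\Gamma/\Lambda)^{*}$; by the transfer principle ${}^{*}m$ is an \emph{internal}, $\hGamma$-invariant mean on the internal bounded functions on ${}^{*}(\Gamma/\Lambda)=\hGamma/\hLambda$, valued in $\hR$ and inheriting positivity, normalization and left $\hGamma$-invariance. This internal mean is the only new ingredient.

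The second step is to define the transfer on the standard complex computing asymptotic cohomology (Section \ref{ss:preli:asy}). As in ordinary bounded cohomology, the complex of internal bounded cochains $\hGamma^{\bullet+1}\to\V$ computes $\Ha^\bullet(\Gamma,\V)$ through its $\hGamma$-invariants and $\Ha^\bullet(\Lambda,\V)$ through its $\hLambda$-invariants — the latter being the asymptotic analogue of the usual change-of-group observation — and $\res$ is the inclusion of the former into the latter. Write $\V=\W^{*}$ for its predual (this is where the \emph{dual} hypothesis enters), and for an $\hLambda$-invariant internal bounded cochain $f\colon\hGamma^{n+1}\to\V$ set
$$\trans(f)(g_0,\dots,g_n):={}^{*}m_{x\hLambda}\bigl[\,x\cdot f(x^{-1}g_0,\dots,x^{-1}g_n)\,\bigr],$$
the mean being taken weak-$*$: for each $w\in\W$, the internal $\hR$-valued function $x\hLambda\mapsto\langle x\cdot f(x^{-1}g_0,\dots,x^{-1}g_n),\,w\rangle$ on $\hGamma/\hLambda$ is well defined on cosets ($f$ being $\hLambda$-invariant) and bounded ($f$ being bounded), so applying ${}^{*}m$ and letting $w$ range over $\W$ yields an element of $\W^{*}=\V$. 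Since everything here is an internal formula applied to internal data, $\trans(f)$ is again an internal bounded cochain, and $\trans$ is norm-nonincreasing. It then remains to check, exactly as classically, that $\trans(f)$ is $\hGamma$-invariant (using left-invariance of ${}^{*}m$), that $\trans$ commutes with the coboundary (which acts only on the $(g_0,\dots,g_n)$ slots, not on the averaging variable), and that $\trans(f)=f$ whenever $f$ is already $\hGamma$-invariant (normalization of the mean). Hence $\trans\circ\res=\id$ at the cochain level, and therefore in cohomology, so $\res$ is injective.

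The only points requiring genuine care — but no new idea — are the bookkeeping at the interface between the $\hR$-valued internal mean and the asymptotic Banach module structure on $\V$ (confirming that the weak-$*$ average stays within the category of dual asymptotic Banach $\hGamma$-modules, with the correct internal bound), and the preliminary identification of $\Ha^\bullet(\Lambda,\V)$ with the cohomology of the $\hLambda$-invariants of the $\hGamma$-complex. Both are settled by unwinding the definitions of Section \ref{ss:preli:asy} together with transfer.
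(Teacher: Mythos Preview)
Your approach is correct and matches the paper's: construct a transfer via the invariant mean on $\Gamma/\Lambda$, transported internally and applied weak-$*$, and verify it splits $\res$ on the cochain level. Two places where the paper is more careful than your sketch suggests. First, a lift $f$ of an element of $\LLtilde((\hGamma)^{n+1},\V)^{\hLambda}$ is only $\hLambda$-invariant \emph{modulo infinitesimals}, not on the nose, so your function $x\hLambda\mapsto\langle x\cdot f(x^{-1}g_0,\dots,x^{-1}g_n),w\rangle$ is not literally well-defined on cosets; the paper resolves this by fixing a set of coset representatives and restricting to it (see Lemma \ref{lem:internalmean} and the surrounding proof), after which the residual ambiguity is infinitesimal and disappears in $\Vtilde$. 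Second, the identification of $\Ha^\bullet(\Lambda,\V)$ with the cohomology of the $\hLambda$-invariants of the $\hGamma$-complex is not just ``unwinding definitions'': in the asymptotic setting it relies on the Zimmer-amenable machinery (Corollary \ref{cor:zimmer:subgroup}, via Theorem \ref{thm:zimmer:precise}). Neither point alters the architecture, but both are where the real work hides.
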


\begin{proposition}
\label{intro:prop:mapping:ac}

Let $N \leq \Gamma$ be an amenable normal subgroup. Then the pullback map $\Ha^n(\Gamma/N, \V) \to \Ha^n(\Gamma, \V)$ is an isomorphism, for all $n \geq 0$ and all dual asymptotic Banach ${}^{*} (\Gamma/N)$-modules $\V$.
\end{proposition}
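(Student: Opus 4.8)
The plan is to prove this exactly the way Gromov's mapping theorem is proved in bounded cohomology (equivalently, via a degenerate Lyndon--Hochschild--Serre argument for the amenable normal subgroup $N$), carried out inside the asymptotic-cohomology formalism: I would pull $\V$ back to a resolution manufactured from $\Gamma/N$, show that this resolution is relatively injective over $\hGamma$ thanks to the amenability of $N$, and then observe that its $\hGamma$-invariants are on the nose the cochains computing $\Ha^{*}(\Gamma/N,\V)$, so that the pullback map $p^{*}$ is realised by an identity map of cochain complexes.

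First I would record two translations. By the transfer principle one has ${}^{*}(\Gamma/N)=\hGamma/{}^{*}N$, so a ${}^{*}(\Gamma/N)$-module is precisely a $\hGamma$-module on which ${}^{*}N$ acts trivially, and ${}^{*}p\colon\hGamma\to{}^{*}(\Gamma/N)$ is the map along which the $\hGamma$-structure on $\V$ is obtained. Secondly, an invariant mean on the amenable group $N$ transfers to an internal ${}^{*}\R$-valued ${}^{*}N$-invariant mean ${}^{*}m$ on internal bounded functions on ${}^{*}N$; this is the only use of amenability, and ${}^{*}m$ is the device for averaging cochains. Throughout I would use that $\Ha^{*}(\Gamma,\V)$ (resp.\ $\Ha^{*}(\Gamma/N,\V)$) is computed by the $\hGamma$-invariants (resp.\ ${}^{*}(\Gamma/N)$-invariants) of any strong resolution of $\V$ by relatively injective asymptotic modules, as developed in Section~\ref{ss:preli:asy}.

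Next I would consider the resolution $\mathcal{A}^{\bullet}$ of $\V$ in which $\mathcal{A}^{k}$ is the module of bounded asymptotic functions $({}^{*}(\Gamma/N))^{k+1}\to\V$ with the diagonal $\hGamma$-action via ${}^{*}p$ --- the homogeneous bar resolution of $\Gamma/N$, viewed over $\hGamma$. Inserting a fixed basepoint coset gives a uniformly bounded contracting homotopy, so this is a strong resolution of $\V$; and since the $\hGamma$-action factors through ${}^{*}(\Gamma/N)$, the $\hGamma$-invariants and the ${}^{*}(\Gamma/N)$-invariants of $\mathcal{A}^{\bullet}$ coincide verbatim, the latter being by construction the complex computing $\Ha^{*}(\Gamma/N,\V)$. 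Thus the whole statement reduces to showing that each $\mathcal{A}^{k}$ is relatively injective \emph{as an asymptotic $\hGamma$-module}: once that is in hand, $p^{*}$ is identified with the identity of $(\mathcal{A}^{\bullet})^{\hGamma}=(\mathcal{A}^{\bullet})^{{}^{*}(\Gamma/N)}$ and hence is an isomorphism in every degree $n\ge 0$.

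The relative injectivity of $\mathcal{A}^{k}$ over $\hGamma$ is where I expect the real work, and it is the asymptotic analogue of the classical fact that $\ell^{\infty}(G/H,E)$ is a relatively injective $G$-module when $H$ is amenable. The strategy is the usual one: presenting $\mathcal{A}^{k}$ as bounded asymptotic functions on $\hGamma/{}^{*}N$ valued in a coefficient asymptotic module, one solves the defining extension problem first merely ${}^{*}N$-equivariantly (which is automatic) and then averages the lift over ${}^{*}N$ via ${}^{*}m$ to upgrade it to a $\hGamma$-equivariant lift, legitimate because ${}^{*}N$ acts trivially on $\V$. The delicate part will be staying within the formalism: verifying that the lifts, homotopies and averages have norms bounded uniformly along the ultrafilter, that ${}^{*}m$-averaging is norm non-increasing and compatible with the passage to the quotient by infinitesimals, and that the outcome is a genuine morphism of asymptotic $\hGamma$-modules. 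A slightly more concrete alternative with the same quantitative core bypasses relative injectivity altogether: average a homogeneous asymptotic $n$-cocycle on $\hGamma^{n+1}$ coordinatewise over ${}^{*}N$ using ${}^{*}m$ to obtain a cocycle that descends to ${}^{*}(\Gamma/N)$, exhibit it as cohomologous to the original after pulling back by a partial-averaging homotopy, and check that this produces a two-sided inverse to $p^{*}$.
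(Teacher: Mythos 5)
Your proposal is correct and is in substance the paper's own argument: the paper observes that $\Gamma/N$ with the counting measure is a \emph{discrete Zimmer-amenable} $\Gamma$-space (its point stabilizers are the conjugates of $N$, hence amenable), so the complex $\LLtilde(({}^{*}(\Gamma/N))^{\bullet},\V)^{\hGamma}$ computes $\Ha^{\bullet}(\Gamma,\V)$ via the orbit map at the basepoint coset $N$ --- which is literally the pullback of cochains --- and this complex coincides verbatim with the standard one for $\Gamma/N$ since the action factors through ${}^{*}(\Gamma/N)$. The only difference is one of packaging: where you propose to establish relative injectivity of $\mathcal{A}^{k}$ by hand via averaging over an internal mean on ${}^{*}N$, the paper outsources exactly that step to the already-established resolution theorem for Zimmer-amenable spaces (Theorem~\ref{thm:zimmer:precise}, from \cite[Theorem 4.3.3]{mainref}), so your ``delicate part'' is precisely the content of the cited result.
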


Despite the lack of a general theorem connecting the two theories, asymptotic cohomology seems to be closely connected to \emph{bounded cohomology}, a well-established cohomology theory \cite{johnson, gromov, ivanov, monod:book, frigerio} that has become a fundamental tool in rigidity theory. The vanishing result for asymptotic cohomology of lattices leading to stability \cite{mainref} follows closely the vanishing result for bounded cohomology of high-rank lattices \cite{lattices, rigidity, monodshalom}.
Similarly, our proofs of Theorem \ref{intro:thm:lamplighters:ac} and Propositions \ref{intro:prop:coamenable:ac} and \ref{intro:prop:mapping:ac} follow closely the corresponding bounded-cohomological results: for Theorem \ref{intro:thm:lamplighters:ac} this was recently proven by Monod \cite{monod:lamplighters}, while for Proposition \ref{intro:prop:coamenable:ac} this is a foundational result in bounded cohomology \cite[8.6]{monod:book} (see also \cite{coamenable}), and Proposition \ref{intro:prop:mapping:ac} is an analogue of Gromov's Mapping Theorem \cite{gromov}. Note that the bounded cohomology of $T$ and $V$ has also been recently computed \cite{binate, monodnariman, konstantin}, but only with trivial real coefficients, and our proofs are of a different nature.

We thus hope that the steps we undertake to prove our main results will be useful to produce more computations in asymptotic cohomology, and therefore more examples of uniformly $\Uf$-stable, and in particular Ulam stable, groups. \\

Our results have applications to the study of approximating properties of groups. While questions on pointwise approximation, such as soficity, hyperlinearity, and matricial finiteness, are in some sense disjoint from the content of this paper, our stability results imply that some of the groups considered in this paper are not \emph{uniformly approximable} with respect to the relevant families $\Uf$ (Corollary \ref{cor:F:uniapprox}). We are also able to treat the case of symmetric groups endowed with the Hamming distance, by a more direct argument (Proposition \ref{prop:sym}). \\

We end this introduction by proposing a question. There is a notion of \emph{strong Ulam stability}, where the approximations take values in unitary groups of possibly infinite-dimensional Hilbert spaces, with the operator norm. It is a well-known open question whether strong Ulam stability coincides with amenability. In this direction it is known that strong Ulam stable groups have no non-abelian free subgroups \cite[Theorem 1.2]{BOT}, but there exist groups without non-abelian free subgroups that are not strong Ulam stable \cite{alpeev}.

On the other hand, our results also prove uniform $\Uf$-stability stability of the piecewise projective groups of Monod \cite{PP1} and Lodha--Moore \cite{PP3}, which are nonamenable and without free subgroups (see Section \ref{s:PLPP}). Therefore we ask the following:

\begin{question}\label{q}
Let $\Gamma$ be a countable group without non-abelian free subgroups. Is $\Gamma$ uniformly $\Uf$-stable (with a linear estimate)? Or at least Ulam stable?

In particular, are all countable torsion groups Ulam stable?
\end{question}

In other words: if $\Gamma$ is not Ulam stable, must $\Gamma$ contain a non-abelian free subgroup? To our knowledge it is not even known if groups admitting non-trivial quasimorphisms must contain non-abelian free subgroups: see \cite{manning} and \cite{laws} for partial results in this direction. \\

\textbf{Conventions:} All groups are assumed to be discrete and countable. The set of natural numbers $\N$ starts at $0$. A non-principal ultrafilter $\omega$ on $\N$ is fixed for the rest of the paper. \\

\textbf{Outline:} We start in Section \ref{s:preli} by reviewing the framework of asymptotic cohomology and its applications to stability, as developed in \cite{mainref}.
In Section \ref{s:hereditary} we discuss hereditary properties for Ulam stability, and prove Propositions \ref{intro:prop:coamenable} and \ref{intro:prop:mapping}. We then move to lamplighters and prove Theorem \ref{intro:thm:lamplighters} in Section \ref{s:lamplighters}, then to Thompson groups proving Theorem \ref{intro:thm:F} in Section \ref{s:thompson}. In Section \ref{s:sharp} we provide examples showing that some of our results and some of the results from \cite{mainref} are sharp, and conclude in Section \ref{s:last} by discussing applications to the study of metric approximations of groups. \\

\textbf{Acknowledgements:} The authors are indebted to Alon Dogon, Lev Glebsky, Alexander Lubotzky and Nicolas Monod for useful conversations.

\section{Uniform stability and asymptotic cohomology}
\label{s:preli}

In this section, we shall briefly summarize the notion of defect diminishing that allows us to formulate the stability problem as a problem of lifting of homomorphisms with abelian kernel, which in turn motivates the connection to second cohomology. For a more detailed description, refer to Section $2$ in \cite{mainref}.

\subsection{Uniform stability and defect diminishing}
\label{ss:preli:stab}
We begin by reviewing some basic notions of ultraproducts and non-standard analysis, before formulating the stability problem as a homomorphism lifting problem. For this, it is convenient to describe a uniform asymptotic homomorphism (which is a sequence of maps) as one map of ultraproducts. This in turn allows us to perform a soft analysis to obtain a (true) homomorphism to a quotient group. 
Recall that $\omega$ is a fixed non-principal ultrafilter on $\N$. The \emph{algebraic ultraproduct} $\prod_{\omega}X_n$ of an indexed collection $\{X_n\}_{n \in \N}$ of sets is defined to be $\prod_{\omega} X_n := \prod_{n \in \N} X_n / \sim$ where for $\{x_n\}_{n \in \N}, \{y_n\}_{n \in \N} \in \prod_{n \in \N}X_n$, we define $\{x_n\}_{n \in \N} \sim \{y_n\}_{n \in \N}$ if $\{n \space : \space x_n=y_n\} \in \omega$. Ultraproducts can be made to inherit algebraic structures of their building blocks. For instance, for a group $\Gamma$, the ultraproduct $\prod_{\omega}\Gamma$, called the \emph{ultrapower} and denoted $\hGamma$, is itself a group. Another important example we will use is the field of hyperreals $\hR$, the ultrapower of $\mathbb{R}$.\\

Objects (sets, functions, etc.) that arise as ultraproducts of standard objects are referred to as \emph{internal}. Important examples of non-internal objects are the subsets $\hR_b$ of \emph{bounded} hyperreals, consisting of elements $\{x_n\}_{\omega} \in \hR$ for which there exists $S \in \omega$ and $C \in \R_{\geq 0}$ such that $\lvert x_n \rvert \leq C$ for every $n \in S$, and the subset $\hR_{inf}$ of \emph{infinitesimals}, consisting of elements $\{x_n\}_{\omega} \in \hR$ such that for every real $\eps > 0$, there exists $S \in \omega$ such that $\lvert x_n \rvert < \eps$ for every $n \in S$.

For $x,y \in \hR$, write $x=O_{\omega}(y)$ if $x/y \in \hR_b$, and write $x=o_{\omega}(y)$ if $x/y \in \hR_{inf}$. In particular, $x \in \hR_b$ is equivalent to $x=O_{\omega}(1)$ while $\eps \in \hR_{inf}$ is equivalent to $\eps = o_{\omega}(1)$.
The subset $\hR_b$ forms a valuation ring with $\hR_{inf}$ being the unique maximal ideal, with quotient $\hR_b/\hR_{inf} \cong \R$. The quotient map $st:\hR_b \to \R$ is known as the \emph{standard part} map or \emph{limit along the ultrafilter $\omega$}. The previous construction can also be replicated for Banach spaces. Let $\{W_n\}_{n \in \N}$ be a family of Banach spaces. Then $\mathcal{W}=\prod_{\omega}W_n$ can be given the structure of a $\hR$-vector space. In fact, it also comes equipped with a $\hR$-valued norm, allowing us to define the external subsets $\mathcal{W}_b$ and $\mathcal{W}_{inf}$. The quotient $\Wtilde := \mathcal{W}_b/\mathcal{W}_{inf}$ is a real Banach space.\\

Given a uniform asymptotic homomorphism $\{\phi_n:\Gamma \to U(k_n)\}_{n \in \N}$ with $\defe(\phi_n) =: \eps_n \to 0$, construct the internal map $\phi : \hGamma \to \prod_{\omega}U(k_n)$ where $\phi := \prod_{\omega}\phi_n$, with (hyperreal) defect $\eps := \{\eps_n\}_{\omega} \in \hR_{inf}$. Then the question of uniform stability with a linear estimate can be rephrased as asking whether there exists an internal homomorphism $\psi:\hGamma \to \prod_{\omega}U(k_n)$ such that their (hyperreal) distance satisfies $\dist(\phi,\psi) := \{\dist(\phi_n,\psi_n)\}_{\omega} = O_{\omega}(\eps)$. 

The advantage of rephrasing the question in terms of internal maps is that an internal map $\phi:\hGamma \to \prod_{\omega}U(k_n)$ with defect $\eps \in \hR_{inf}$ induces a true homomorphism $\tilde{\phi}:\hGamma \to \prod_{\omega}U(k_n)/B(\eps)$ where $B(\eps)$ is the (external) normal subgroup of $\prod_{\omega}U(k_n)$ comprising elements that are at a distance $O_{\omega}(\eps)$ from the identity. In particular, the question of uniform stability with a linear estimate can equivalently be rephrased as asking whether given such an internal map $\phi$, can the homomorphism $\tilde{\phi}:\hGamma \to \prod_{\omega}U(k_n)/B(\eps)$ be lifted to an internal homomorphism $\psi:\hGamma \to \prod_{\omega}U(k_n)$.\\

Reinterpreting uniform stability with a linear estimate as a homomorphism lifting problem motivates a cohomological approach to capturing the obstruction. However, the obstacle here is that the kernel $B(\eps)$ of the lifting problem is not abelian. This can be handled by lifting in smaller steps so that each step involves an abelian kernel. Define a normal subgroup $I(\eps)$ of $B(\eps)$ comprising elements that are at a distance of $o_{\omega}(\eps)$ from the identity. Then we can attempt to lift $\tilde{\phi}:\hGamma \to \prod_{\omega}U(k_n)/B(\eps)$ to an internal map $\psi:\hGamma \to \prod_{\omega}U(k_n)$ that is a homomorphism modulo $I(\eps)$. The problem is simpler from the cohomological point of view: since the norms are submultiplicative, the kernel $B(\eps)/I(\eps)$ of this lifting problem is abelian. The group $\Gamma$ is said to have the \emph{defect diminishing property} with respect to $\Uf$ if such a lift exists; more explicitly, $\Gamma$ has the defect diminishing property if for every uniform asymptotic homomorphism $\phi : \Gamma \to \Uf$ there exists a uniform asymptotic homomorphism $\psi$ with the same range such that $\dist(\phi, \psi) = O_{\omega}(\defe(\phi))$ and $\defe(\psi) = o_{\omega}(\defe(\phi))$.

\begin{theorem}[{\cite[Theorem 2.3.11]{mainref}}]
\label{thm:defdim}

$\Gamma$ has the defect diminishing property with respect to $\Uf$ if and only if $\Gamma$ is uniformly $\Uf$-stable with a linear estimate.
\end{theorem}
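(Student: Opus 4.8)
The plan is to prove the two implications separately, the second being the substantial one. The implication ``uniformly $\Uf$-stable with a linear estimate $\Rightarrow$ defect diminishing'' is immediate: given a uniform asymptotic homomorphism $\phi : \Gamma \to \Uf$ with $\defe(\phi) = \eps$, stability with a linear estimate furnishes a sequence of genuine homomorphisms $\psi$ with the same range and $\dist(\phi,\psi) = O_\omega(\eps)$, and since $\defe(\psi) = 0 = o_\omega(\eps)$, this $\psi$ already witnesses defect diminishing for $\phi$.

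For the converse I would run a Newton-type iteration. Starting from $\phi_0 := \phi$ with $\eps_0 := \defe(\phi_0) \in \hR_{inf}$, apply defect diminishing repeatedly to produce uniform asymptotic homomorphisms $\phi_0, \phi_1, \phi_2, \dots$, all with the same range degrees $k_n$, satisfying $\dist(\phi_j, \phi_{j+1}) = O_\omega(\defe(\phi_j))$ and $\defe(\phi_{j+1}) = o_\omega(\defe(\phi_j))$ for every $j$. The point is that the $o_\omega$-gain is far stronger than a geometric iteration needs: by choosing suitable representing sequences and a nested family $S_1 \supseteq S_2 \supseteq \cdots$ of sets in $\omega$, one arranges that along $S_j$ the defects decay at least geometrically, $\defe(\phi_j) \le 2^{-j}\eps_0$, while the ``moves'' $\dist(\phi_j,\phi_{j+1})$ form a telescoping series summing to $O_\omega(\eps_0)$. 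The approximating object $\psi$ is then the limit of the $\phi_j$: the telescoping estimate gives $\dist(\phi,\psi) = O_\omega(\eps_0)$, and---this is the delicate point---$\psi$ should have defect exactly zero, hence be a genuine sequence of homomorphisms, yielding stability with a linear estimate. Equivalently, in the lifting language of Section~\ref{ss:preli:stab}, one is bootstrapping the single-step lift of $\tilde\phi : \hGamma \to \prod_\omega U(k_n)/B(\eps)$ through $\prod_\omega U(k_n)/I(\eps)$ provided by defect diminishing, applied over and over to smaller and smaller infinitesimals, into a lift all the way to $\prod_\omega U(k_n)$.

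The main obstacle is making sense of that limit. The iteration is indexed by the standard integers, but the ambient group $\prod_\omega U(k_n)$ and all the error bounds live in the hyperreal world, and the sets $S_j$ on which the geometric decay holds shrink with $j$, so $\bigcap_j S_j$ need not lie in $\omega$. A naive diagonal $\psi_n := \phi_{j(n),n}$ with $j(n)\to\infty$ along $\omega$ only yields $\defe(\psi_n)\to 0$, i.e. another uniform asymptotic homomorphism, not an honest one. To force the defect to be genuinely zero one must combine the internal completeness of the space of internal maps $\hGamma \to \prod_\omega U(k_n)$ under $\dist$ with the countable ($\aleph_1$-)saturation of the ultrapower: the countable type asserting ``$x$ is within $O_\omega(\eps_0)$ of $\phi_0$ and $\defe(x)\le 2^{-j}\eps_0$ for all $j$'' is finitely satisfiable by the $\phi_j$, hence realized, and a final refinement---either iterating the construction transfinitely and controlling limit stages, or a more careful diagonal bookkeeping over $n$---upgrades the conclusion to $\defe(\psi)=0$. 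Organizing this passage to the limit cleanly is the technical heart; the rest is routine manipulation of the $O_\omega$/$o_\omega$ calculus.
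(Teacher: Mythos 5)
Your easy direction is fine, and the iterative shape of the hard direction is the right one, but the passage to the limit --- which you yourself flag as ``the technical heart'' --- is exactly where the argument as described breaks down, and neither of the two fixes you gesture at will close it. Countable saturation realizes the type ``$\dist(\phi_0,x)=O_\omega(\eps_0)$ and $\defe(x)\le 2^{-j}\eps_0$ for all $j$'', but the element it produces only has defect below $2^{-j}\eps_0$ for every \emph{standard} $j$; there are plenty of strictly positive hyperreals with that property, and an internal map whose defect is a tiny nonzero infinitesimal is still not an internal homomorphism. Transfinite iteration has the same problem: the defects $\eps^{(\alpha)}$ form a strictly decreasing chain of positive infinitesimals with no mechanism forcing them to reach $0$ at any stage. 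And, as you correctly observe, diagonalizing over $n$ only reproduces an asymptotic homomorphism. So the iteration, carried out entirely inside the ultraproduct, can never terminate: ``smaller than everything standard'' is not ``zero'' in $\hR$.

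The missing idea is a \emph{de-internalization} step before iterating. From the defect diminishing property one first extracts a genuinely standard, uniform statement: there exist standard constants $C,\delta>0$ such that every single map $\phi:\Gamma\to U(k)$ with $\defe(\phi)\le\delta$ admits $\psi$ with $\dist(\phi,\psi)\le C\,\defe(\phi)$ and $\defe(\psi)\le\tfrac12\defe(\phi)$. (If not, one chooses for each $m$ a counterexample $\phi_m$ with $\defe(\phi_m)\le 1/m$ witnessing the failure with constant $m$; the sequence $(\phi_m)$ is a uniform asymptotic homomorphism, and applying defect diminishing to it contradicts the choice of $\phi_m$ for $\omega$-almost every $m$.) With $C$, $\delta$ and the fixed decay factor $\tfrac12$ in hand, one then iterates \emph{for each fixed $n$ separately} inside the honest group $U(k_n)$: the resulting sequence $\phi_n^{(0)},\phi_n^{(1)},\dots$ is Cauchy in the complete metric on $U(k_n)$, its limit $\psi_n$ exists, has defect $\lim_j \defe(\phi_n^{(j)})=0$ by continuity of the defect under uniform convergence, and satisfies $\dist(\phi_n,\psi_n)\le 2C\,\defe(\phi_n)$. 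This is how the quoted result is actually proved in the reference; your version never leaves the ultraproduct and therefore cannot produce a map of defect exactly zero.
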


The obstruction to such a homomorphism lifting, with an abelian kernel $B(\eps)/I(\eps)$, can be carefully modeled using a cohomology $\HH_a^{\bullet}(\Gamma, \W)$ so that $\HH_a^2(\Gamma,\W)=0$ implies the defect diminishing property (and consequently, uniform stability with a linear estimate). Here $\mathcal{W}=\prod_{\omega}\uf(k_n)$ is an internal Lie algebra of $\prod_{\omega}U(k_n)$ equipped with an asymptotic action of the ultrapower $\hGamma$ constructed from the uniform asymptotic homomorphism $\phi$ that we start out with. The logarithm of the defect map
$$\hGamma \times \hGamma \to \prod_{\omega}U(k_n) : (g_1,g_2) \mapsto \phi(g_1)\phi(g_2)\phi(g_1g_2)^{-1}$$ would correspond to an asymptotic $2$-cocycle in $\HH_a^2(\Gamma,\W)$. Such a cocycle is a coboundary in this setting (that is, it represents the zero class in $\HH_a^2(\Gamma, \W)$), if and only if the defect diminishing property holds for the asymptotic homomorphism $\phi$.

\subsection{Asymptotic cohomology}
\label{ss:preli:asy}

The reduction to a lifting problem with abelian kernel motivates a cohomology theory of $\Gamma$ with coefficients in the internal Lie algebra $\mathcal{W}=\prod_{\omega}\uf(k_n)$ of $\prod_{\omega}U(k_n)$, equipped with an asymptotic conjugation action of $\Gamma$. In this section we review the formal definition of this cohomology, and state some results from \cite{mainref} that we shall need to work with it.\\
 
Let $(V_n)_{n \geq 1}$ be a sequence of Banach spaces, and let $\V := \prod\limits_\omega V_n$ be their algebraic ultraproduct: we refer to such $\V$ as an \emph{internal} Banach space. For $v \in \V$ we denote by $\| v \|$ the hyperreal $(\| v_n \|)_\omega \in \hR$. We then denote by
$$\Vb := \{ v \in \V : \| v \| \in \hRb \}; \qquad \Vinf := \{ v \in \V : \| v \| \in \hRinf \}.$$
Then the quotient $\Vtilde := \Vb/\Vinf$ is a real Banach space, whose norm is induced by the ultralimit of $\| \cdot \|$ on $\Vb$. For each $V_n$ denote by $V_n^\#$ its continuous dual, and let $\V^\#$ be the corresponding algebraic ultraproduct. The pairing $\langle \cdot, \cdot \rangle_n : V_n^\# \times V_n \to \mathbb{R}$ induces a pairing $\V^\# \times \V \to \hR$ which descends to $\Vtilde^\# \times \Vtilde \to \mathbb{R}$. We call $\V^\#$ the \emph{internal dual} of $\V$.

Now let $\Gamma$ be a countable discrete group, and let $\pi : \hGamma \times \V \to \V$ be an internal map which preserves $\| \cdot \|$ and induces an isometric linear action $\tilde{\pi} : \hGamma \times \Vtilde \to \Vtilde$ of $\hGamma$. Such a map $\pi$ is referred to as an \emph{asymptotic $\hGamma$-action on $\V$}. We then call $(\pi, \V)$, or $\V$, if $\pi$ is understood from context, an \emph{asymptotic Banach $\hGamma$-module}. Given an internal Banach $\hGamma$-module $(\pi, \V)$, the contragradient on each coordinate induces an internal map $\pi^\# : \hGamma \times \V^\# \to \V^\#$ making $(\pi^\#, \V^\#)$ into an asymptotic Banach $\hGamma$-module. We call a module $\V$ a \emph{dual asymptotic Banach $\hGamma$-module} if $\V$ is the dual of some asymptotic $\hGamma$-module denoted $\V^{\flat}$. We decorate these definitions with the adjective \emph{finitary} if each $V_n$ is finite-dimensional. \\

Now for each $m \geq 0$ define the internal Banach space $\LL((\hGamma)^m, \V) := \prod\limits_{\omega} \ell^\infty(\Gamma^m, V_n)$ (note that $m$ is fixed and $n$ runs through the natural numbers with respect to the ultrafilter $\omega$). Similarly to before, for $f \in \LL((\hGamma)^m, \V)$ we denote $\| f \| := (\| f_n \|)_\omega \in \hR$ and
$$\LLb((\hGamma)^m, \V) := \{ f \in \LL((\hGamma)^m, \V) : \| f \| \in \hRb \};$$
$$\LLinf((\hGamma)^m, \V) := \{ f \in \LL((\hGamma)^m, \V) : \| f \| \in \hRinf \}.$$
Given an asymptotic $\hGamma$-action $\pi$ on $\V$, we can construct a natural asymptotic $\hGamma$-action $\rho^m : \hGamma \times \LL((\hGamma)^m, \V) \to \LL((\hGamma)^m, \V)$ given by
\begin{equation}\label{rho}
(\rho^m(g)(f))(g_1,g_2,\dots,g_m) := \pi_G(g)f(g^{-1}g_1,\dots,g^{-1}g_m)
\end{equation}

Then the quotient
$$\LLtilde((\hGamma)^m, \V) := \LLb((\hGamma)^m, \V) / \LLinf((\hGamma)^m, \V)$$
is again a real Banach space equipped with an isometric $\hGamma$-action induced coordinate-wise by $\rho^m$, which defines the invariant subspaces $\LLtilde((\hGamma)^m, \V)^{\hGamma}$. \\

Now define the internal coboundary map
$$d^m : \LL((\hGamma)^m, \V) \to \LL((\hGamma)^{m+1}, \V);$$
\begin{equation}\label{coboundary}
    d^m(f)(g_0, \ldots, g_m) := \sum\limits_{j = 0}^m (-1)^j f(g_0, \ldots, \hat{g_j}, \ldots, g_m),
\end{equation}
which descends to coboundary maps
$$\tilde{d^m}: \LLtilde((\hGamma)^m, \V) \to \LLtilde((\hGamma)^{m+1}, \V).$$
Since $\tilde{d^m}$ is $\hGamma$-equivariant, it defines the cochain complex:
$$0 \xrightarrow{\tilde{d^0}} \LLtilde(\hGamma, \V)^{\hGamma} \xrightarrow{\tilde{d^1}} \LLtilde((\hGamma)^2, \V)^{\hGamma} \xrightarrow{\tilde{d^2}} \LLtilde((\hGamma)^3, \V)^{\hGamma} \xrightarrow{\tilde{d^3}} \cdots $$

\begin{definition}[{\cite[Definition 4.2.2]{mainref}}]
The $m$-th \emph{asymptotic cohomology} of $\Gamma$ with coefficients in $\V$ is
$$\Ha^m(\Gamma, \V) := \ker(\tilde{d^{m+1}}) / \im(\tilde{d^m}).$$
\end{definition}

Other resolutions may also be used to compute asymptotic cohomology. Recall (\cite[5.3.2]{monod:book}) that a regular $\Gamma$-space $S$ is said to be a \emph{Zimmer-amenable $\Gamma$-space} if there exists a $\Gamma$-equivariant conditional expectation $\mathfrak{m}:L^{\infty}(\Gamma \times S) \to L^{\infty}(S)$. Let $S$ be a regular $\Gamma$-space with a Zimmer-amenable action of $\Gamma$, and let $\LL((\hS)^m, \V) := \prod\limits_{\omega} L^\infty_{w*}(S^m, V_n)$ (where $L^\infty_{w*}(S^m, V_n)$ is the space of bounded weak-$*$ measurable function classes from $S^m$ to $V_n$). Again, the asymptotic $\hGamma$-action on $\V$ gives rise to a natural asymptotic $\hGamma$-action on $\LL((\hGamma)^m, \V)$ as in (\ref{rho}), making $\LL((\hS)^m, \V)$ an asymptotic Banach $\hGamma$-module. The coboundary maps too can be defined just as in (\ref{coboundary}), to construct the cochain complex, and we have:

\begin{theorem}[{\cite[Theorem 4.3.3]{mainref}}]
\label{thm:zimmer}

Let $S$ be a Zimmer-amenable $\Gamma$-space, and $\V$ be a dual asymptotic Banach $\hGamma$-module. Then $\HH_a^{\bullet}(\Gamma,\V)$ can be computed as the asymptotic cohomology of the cochain complex
$$0 \xrightarrow{\tilde{d^0}} \LLtilde(\hS, \V)^{\hGamma} \xrightarrow{\tilde{d^1}} \LLtilde((\hS)^2, \V)^{\hGamma} \xrightarrow{\tilde{d^2}} \LLtilde((\hS)^3, \V)^{\hGamma} \xrightarrow{\tilde{d^3}} \cdots$$
\end{theorem}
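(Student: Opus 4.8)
The plan is to transport to the asymptotic category the classical argument (\cite{monod:book}, Ch.~7) that an amenable action yields a resolution computing bounded cohomology. By definition $\Ha^\bullet(\Gamma,\V)$ is the cohomology of the $\hGamma$-invariants of the augmented homogeneous standard complex $0\to\Vtilde\to\LLtilde((\hGamma)^{\bullet+1},\V)$, which is a strong resolution of $\Vtilde$ by relatively injective asymptotic $\hGamma$-modules; this homological-algebra scaffolding — relatively injective modules, strong resolutions, and the fundamental lemma that two such resolutions are canonically $\hGamma$-chain-homotopy equivalent — is already in place in \cite{mainref}, modelled on \cite{monod:book}. So it suffices to show that $0\to\Vtilde\to\LLtilde((\hS)^{\bullet+1},\V)$, with augmentation $V_n\hookrightarrow L^\infty_{w*}(S,V_n)$ by constants and coboundary maps as in (\ref{coboundary}), is again a strong resolution of $\Vtilde$ by relatively injective asymptotic $\hGamma$-modules; the fundamental lemma then identifies the cohomology of its invariants with $\Ha^\bullet(\Gamma,\V)$, which is exactly the $S$-complex of the statement (after the standard identification of homogeneous invariants with the inhomogeneous complex).

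That the $S$-complex is a resolution requires no amenability. For each fixed $n$ the augmented complex $0\to V_n\to L^\infty_{w*}(S^{\bullet+1},V_n)$ admits a contracting homotopy $h_n^\bullet$ of norm $\le 1$, for instance integration of the first coordinate against the quasi-invariant probability measure of the regular $\Gamma$-space $S$, which satisfies $dh+hd=\id$ by the simplicial identity (it is not $\hGamma$-equivariant, but "strong" does not ask for that). Since $\|h_n^\bullet\|\le 1$ uniformly in $n$, the internal map $\prod_\omega h_n^\bullet$ carries $\LLb$ into $\LLb$ and $\LLinf$ into $\LLinf$, hence descends to a contracting homotopy of $\LLtilde((\hS)^{\bullet+1},\V)$; exactness and the augmentation to $\Vtilde$ follow.

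The substantive point is relative injectivity of each $\LLtilde((\hS)^m,\V)$; writing $\LLtilde((\hS)^m,\V)=\LLtilde(\hS,\LLtilde((\hS)^{m-1},\V))$ reduces us to $m=1$. This is exactly where Zimmer-amenability of $S$ enters, through the $\Gamma$-equivariant conditional expectation $\mathfrak{m}:L^\infty(\Gamma\times S)\to L^\infty(S)$. As in the bounded setting (\cite{monod:book}, \S5.7), for any \emph{dual} Banach $\Gamma$-module $E$ and any extension problem along an admissible morphism, $\mathfrak{m}$ produces by $\Gamma$-averaging — using the weak-$*$ structure of $E$, which is why $\V$ is assumed dual — an explicit solution whose norm is controlled by that of the data. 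Since $S$, hence $\mathfrak{m}$, does not depend on $n$, applying this with $E=V_n$ for all $n$ and taking the ultraproduct yields, for every admissible morphism of asymptotic $\hGamma$-modules, an internal extension with the same uniform norm bound; uniformity is precisely what lets it preserve $\LLb$ and $\LLinf$ and descend to the quotient, giving relative injectivity of $\LLtilde(\hS,\V)$. I expect this to be the main obstacle: checking that the classical extensions assemble into a genuinely \emph{internal} map, with norm estimates uniform enough to survive the passage to $\Vtilde$ and compatible with the internal action $\tilde\pi$. This is the asymptotic incarnation of bookkeeping that is routine, if tedious, in the bounded theory.

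With both the standard resolution and the $S$-resolution recognized as strong resolutions of $\Vtilde$ by relatively injective asymptotic $\hGamma$-modules, the fundamental lemma produces a $\hGamma$-chain-homotopy equivalence between them; taking $\hGamma$-invariants and then cohomology gives $\Ha^\bullet(\Gamma,\V)\cong H^\bullet\big(\LLtilde((\hS)^{\bullet+1},\V)^{\hGamma}\big)$. An essentially equivalent route avoids naming "relative injectivity": run the double complex $\LLtilde((\hGamma)^{p+1}\times(\hS)^{q+1},\V)^{\hGamma}$, whose rows are exact by the homotopy of the second paragraph (adjoined with spectator $\Gamma$-variables, forgotten non-equivariantly) and whose columns compute $\Ha^\bullet(\Gamma,\LLtilde((\hS)^{q+1},\V))$, which vanishes in positive degree by the same $\mathfrak{m}$-averaging; comparing the two edge maps of the resulting spectral sequence gives the identification.
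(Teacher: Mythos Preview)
This theorem is not proved in the present paper: it is quoted from \cite[Theorem 4.3.3]{mainref}, and the only indication of its proof here is the short paragraph preceding Theorem~\ref{thm:zimmer:precise}, which says that one starts from a $\Gamma$-homotopy equivalence between the standard complex $L^\infty(\Gamma^{\bullet})$ and the $S$-complex $L^\infty(S^{\bullet})$, extends it internally to the asymptotic complexes, and then passes to dual coefficients via suitable identifications. Your proposal is a correct and more detailed unpacking of exactly this strategy: the $\Gamma$-homotopy equivalence the paper alludes to is precisely the output of the fundamental lemma applied to two strong relatively injective resolutions, and your use of the conditional expectation $\mathfrak{m}$ to obtain relative injectivity with uniform norm bounds, together with the integration contracting homotopy, is the standard mechanism from \cite{monod:book} that \cite{mainref} transports to the asymptotic setting. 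So there is no genuine difference in approach; your double-complex alternative is also standard and equivalent.
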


In the context of uniform $\Uf$-stability, the relevant asymptotic Banach $\hGamma$-module we shall be interested is the ultraproduct $\mathcal{W}=\prod_{\omega}\uf(k_n)$, where $\uf(k_n)$ is the Lie algebra of $U(k_n)$. Note that we are only considering finite-dimensional unitary groups, so such a module is \emph{finitary}. Given a uniform asymptotic homomorphism $\phi:\hGamma \to \prod_{\omega}U(n)$ with defect $\defe(\phi) \leq_\omega \eps \in {}^*\mathbb{R}_{inf}$, this can be used to construct the asymptotic action $\pi:\hGamma \times \mathcal{W} \to \mathcal{W}$ given by $\pi(g)v = \phi(g)v\phi(g)^{-1}$, making $\mathcal{W}$ an asymptotic Banach $\hGamma$-module.

\begin{definition}
\label{def:Ulam:module}

We call such a module $\mathcal{W}$ an \emph{Ulam $\hGamma$-module supported on $\Uf$}.
\end{definition}

Also, consider the map $\alpha:\hGamma \times \hGamma \to \mathcal{W}$ given by 
\begin{equation}\label{cocycle}
    \alpha(g_1,g_2) = \frac{1}{\eps} \log(\phi(g_1)\phi(g_2)\phi(g_1g_2)^{-1}).
\end{equation}

This map $\alpha$ induces an inhomogeneous $2$-cocycle $\tilde{\alpha} : \hGamma \times \hGamma \to \tilde{\W}$, and thus defines a class in $\HH_a^2(\Gamma,\mathcal{W})$, under the usual correspondence between inhomogeneous cochains and invariant homogeneous cochains \cite[Theorem 4.2.4]{mainref}.

\begin{definition}
\label{def:Ulam:class}
The cohomology class $[\tilde{\alpha}] \in \HH_a^2(\Gamma,\mathcal{W})$ constructed in (\ref{cocycle}) is called the \emph{Ulam class} for the Ulam $\hGamma$-module $\W$.
\end{definition}
Note that any uniform asymptotic homomorphism of $\Gamma$ gives rise to an Ulam $\hGamma$-module and its own Ulam class, and we shall refer to these as \emph{Ulam classes supported on $\Uf$}.\\
Such an Ulam class vanishes, i.e. $\tilde{\alpha}$ is a coboundary, precisely when $\phi$ has the defect diminishing property. Thus Theorem \ref{thm:defdim} yields:

\begin{theorem}[{\cite[Theorem 4.2.4]{mainref}}]
\label{thm:UlamviaHa}

$\Gamma$ is uniformly $\Uf$-stable with a linear estimate with respect to $\Uf$ if and only if all Ulam classes supported on $\Uf$ vanish. In particular, if $\HH_a^2(\Gamma,\W)=0$ for every Ulam $\hGamma$-module supported on $\Uf$, then $\Gamma$ is uniformly $\Uf$-stable, with a linear estimate.
\end{theorem}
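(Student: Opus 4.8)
The plan is to reduce, via Theorem~\ref{thm:defdim}, to a statement about the defect diminishing property, and then --- for each fixed uniform asymptotic homomorphism --- to identify the obstruction to defect diminishing with the vanishing of the associated Ulam class. So I would fix a uniform asymptotic homomorphism $\phi : \hGamma \to \prod_\omega U(k_n)$ with $\defe(\phi) \leq_\omega \eps \in \hRinf$ and $\eps = O_\omega(\defe(\phi))$ (if $\defe(\phi) = 0$ then $\phi$ is already a homomorphism and there is nothing to prove), take $\W = \prod_\omega \uf(k_n)$ to be the corresponding Ulam $\hGamma$-module with $\pi(g)v = \phi(g)v\phi(g)^{-1}$, and let $\alpha$ be as in \eqref{cocycle}. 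A preliminary step, routine and exactly parallel to the classical and bounded cohomology settings, is to set up the translation between the inhomogeneous bar complex in which $\alpha$ naturally lives and the invariant homogeneous complex defining $\Ha^\bullet(\Gamma,\W)$; the passage to the quotients by $\LLinf$ is compatible with all the relevant maps, so this step is purely formal.

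Next I would check that $\alpha$ defines a class in $\Ha^2(\Gamma,\W)$. By bi-invariance of $\|\cdot\|$ one has $\|\phi(gh) - \phi(g)\phi(h)\| = \|\phi(g)\phi(h)\phi(gh)^{-1} - I\|$, so $\defe(\phi) \leq_\omega \eps$ places each $\phi(g)\phi(h)\phi(gh)^{-1}$ well inside the injectivity region of $\exp$, with $\|\log(\phi(g)\phi(h)\phi(gh)^{-1})\| \leq_\omega C\eps$ by a Lipschitz bound for $\log$ near $I$ (applied coordinatewise on a member of $\omega$ and transferred); hence $\alpha \in \LLb((\hGamma)^2,\W)$. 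To see that its coboundary is infinitesimal I would expand $\phi(g_1)\phi(g_2)\phi(g_3)$ in the two obvious ways: both give $\exp(\eps\,(\cdot))\,\phi(g_1g_2g_3)$, and equating the two arguments through the Baker--Campbell--Hausdorff formula --- legitimate because $\eps$ is infinitesimal and all Lie-algebra entries have $\hRb$-bounded norm, the remainder being $O_\omega(\eps^2)$ --- yields $\pi(g_1)\alpha(g_2,g_3) - \alpha(g_1g_2,g_3) + \alpha(g_1,g_2g_3) - \alpha(g_1,g_2) = O_\omega(\eps) \in \LLinf((\hGamma)^3,\W)$, so $\tilde\alpha$ is a cocycle.

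The crux is the equivalence: $\phi$ has the defect diminishing property if and only if $[\tilde\alpha] = 0$. For the forward direction, given a diminishing $\psi$ (same range, $\dist(\phi,\psi) = O_\omega(\eps)$, $\defe(\psi) = o_\omega(\eps)$), I would set $u(g) = \psi(g)\phi(g)^{-1}$, note $\|u(g) - I\| = \|\psi(g) - \phi(g)\| = O_\omega(\eps)$, and let $b(g) := \tfrac{1}{\eps}\log u(g)$, so $b \in \LLb(\hGamma,\W)$ and $\psi(g) = \exp(\eps b(g))\phi(g)$. Expanding
\[
\psi(g_1)\psi(g_2)\psi(g_1g_2)^{-1} = \exp(\eps b(g_1))\,\exp(\eps\,\pi(g_1)b(g_2))\,\exp(\eps\,\alpha(g_1,g_2))\,\exp(-\eps b(g_1g_2))
\]
by BCH and feeding in $\defe(\psi) = o_\omega(\eps)$ forces $b(g_1) + \pi(g_1)b(g_2) - b(g_1g_2) + \alpha(g_1,g_2) \in \LLinf((\hGamma)^2,\W)$, i.e.\ $\tilde\alpha$ is a coboundary. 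The converse reverses this: if $\alpha$ agrees modulo $\LLinf$ with the coboundary of a bounded $1$-cochain $c$, set $b := -c$ and $\psi(g) := \exp(\eps b(g))\phi(g)$; then $\psi$ has the same range degrees, $\dist(\phi,\psi) = O_\omega(\eps)$ since $\|b\|$ is bounded, and the same BCH expansion gives $\defe(\psi) = o_\omega(\eps)$. Putting it together: an Ulam class supported on $\Uf$ is by definition $[\tilde\alpha]$ for some such $\phi$, so the vanishing of all Ulam classes is exactly the assertion that every uniform asymptotic homomorphism $\Gamma \to \Uf$ has the defect diminishing property --- equivalently that $\Gamma$ has this property --- and Theorem~\ref{thm:defdim} turns this into uniform $\Uf$-stability with a linear estimate. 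The ``in particular'' clause is then immediate, since each Ulam class lies in $\Ha^2(\Gamma,\W)$ for some Ulam $\hGamma$-module $\W$.

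The step I expect to be the main obstacle is making the Baker--Campbell--Hausdorff manipulations rigorous in the nonstandard setting. They all rest on $\exp(\eps X)\exp(\eps Y) = \exp(\eps(X+Y) + O_\omega(\eps^2))$ with the remainder genuinely of lower order, which requires $X,Y$ to range over a set of $\hRb$-bounded norm; it is here that submultiplicativity of $\|\cdot\|$ enters, to keep the iterated commutators in the BCH series controlled, and one must phrase the bounds coordinatewise on a member of $\omega$ and transfer, all while keeping $\log$ applied only to elements near the identity. A lesser subtlety is that $\pi$ is merely an asymptotic action, so identities such as $\pi(g_1g_2) = \pi(g_1)\pi(g_2)$ hold only modulo $\LLinf$; harmless for the cohomology computation, but worth tracking when passing to the induced isometric action on $\Wtilde$.
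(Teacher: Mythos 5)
Your proposal is correct and follows the same route as the paper (and the reference it cites): Theorem~\ref{thm:defdim} reduces the statement to the defect diminishing property, and the logarithm/Baker--Campbell--Hausdorff computation identifies defect diminishing for a given $\phi$ with the vanishing of the associated Ulam class $[\tilde\alpha]$, exactly as sketched in Section~\ref{s:preli}. The paper itself does not reprove this result but cites it from \cite{mainref}; your write-up is a faithful fleshing-out of that argument, including the two points that genuinely need care (keeping all logarithms in the injectivity region with $\hRb$-bounded values so the BCH remainders are $O_\omega(\eps^2)$, and remembering that $\pi$ is only an asymptotic action).
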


\section{Hereditary properties}
\label{s:hereditary}

In this section, we first prove Proposition \ref{intro:prop:coamenable:ac} and deduce Proposition \ref{intro:prop:coamenable} from it; then analogously we prove Proposition \ref{intro:prop:mapping:ac} and deduce Proposition \ref{intro:prop:mapping} from it. Both stability statements are not symmetric, and in fact we will see in Section \ref{s:sharp} that the converses do not hold.

\subsection{More on Zimmer-amenability}

For the proofs of Propositions \ref{intro:prop:coamenable:ac} and \ref{intro:prop:mapping:ac}, we will need a more precise version of Theorem \ref{thm:zimmer} in a special case. A regular $\Gamma$-space $S$ is said to be \emph{discrete} if it is a countable set equipped with the counting measure. It follows from the equivalent characterizations in \cite{zimmeramenable} that a discrete $\Gamma$-space is Zimmer-amenable precisely when each point stabilizer is amenable. In particular:
\begin{enumerate}
\item If $\Lambda \leq \Gamma$ is a subgroup, then the action of $\Lambda$ on $\Gamma$ by left multiplication is free, so $\Gamma$ is a discrete Zimmer-amenable $\Lambda$-space.
\item If $N \leq \Gamma$ is an amenable subgroup, then the action of $\Gamma$ on the coset space $\Gamma / N$ has stabilizers equal to conjugates of $N$, so $\Gamma / N$ is a discrete Zimmer-amenable $\Gamma$-space.
\end{enumerate}

For such spaces, we can provide an explicit chain map that implements the isomorphism in cohomology from Theorem \ref{thm:zimmer}. Indeed, the proof of Theorem \ref{thm:zimmer} works by starting with a $\Gamma$-homotopy equivalence between the two complexes:
\[
    0 \to L^\infty(\Gamma) \to L^\infty(\Gamma^2) \to L^\infty(\Gamma^3) \to \cdots \]
    \[ 0 \to L^\infty(S) \to L^\infty(S^2) \to L^\infty(S^3) \to \cdots
\]
which is then extended internally to the asymptotic version of these complexes. The case of dual asymptotic coefficients follows via some suitable identifications of the corresponding complexes (see the paragraph preceding \cite[Theorem 4.20]{mainref}). In case both $\Gamma$ and $S$ are discrete, Zimmer-amenability of $S$ boils down to amenability of point stabilizers, and the homotopy equivalence above can be chosen to be the orbit map
\[
    o_{b}^m : L^\infty(S^m) \longrightarrow L^\infty(\Gamma^m) \]
    \[o_{b}^m(f)(g_1, \ldots, g_m) = f(g_1 b, \ldots, g_m b);
\]
where $b \in S$ is some choice of basepoint; the homotopy inverse is built using invariant means on the point stabilizers \cite[Section 4.9]{frigerio}. Therefore in this case we obtain the following more explicit version of Theorem \ref{thm:zimmer}:

\begin{theorem}
\label{thm:zimmer:precise}

Let $S$ be a discrete Zimmer-amenable $\Gamma$-space, with a basepoint $b \in S$, and let $\V$ be a dual asymptotic Banach $\hGamma$-module. Then the orbit map
\[
    o_{b}^m : \LL((\hS)^m, \V) \longrightarrow \LL((\hGamma)^m, \V) \]
    \[o_{b}^m(f)(g_1, \ldots, g_m) = f(g_1 b, \ldots, g_m b);
    \]
induces induces an isomorphism between $\HH^\bullet_a(\Gamma, \V)$ and the cohomology of the complex
$$0 \xrightarrow{\tilde{d^0}} \LLtilde(\hS, \V)^{\hGamma} \xrightarrow{\tilde{d^1}} \LLtilde((\hS)^2, \V)^{\hGamma} \xrightarrow{\tilde{d^2}} \LLtilde((\hS)^3, \V)^{\hGamma} \xrightarrow{\tilde{d^3}} \cdots$$
\end{theorem}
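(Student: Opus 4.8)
The statement is a sharpening of Theorem \ref{thm:zimmer} in which the abstract isomorphism is realized by an explicit chain map, so the plan is to re-examine the proof of Theorem \ref{thm:zimmer} and check that, in the discrete case, the ingredient it consumes — a $\Gamma$-homotopy equivalence between the scalar complexes $L^\infty(\Gamma^{\bullet})$ and $L^\infty(S^{\bullet})$ — can be chosen with the orbit map $o_b^\bullet$ as its $S \to \Gamma$ direction, and that the internal extension procedure of \cite{mainref} then carries $o_b^\bullet$ to a map inducing the claimed isomorphism.

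\emph{Classical input.} Following \cite[Section 4.9]{frigerio} (see also \cite{monod:book}), for a discrete group $\Gamma$ and a discrete Zimmer-amenable $\Gamma$-space $S$ with basepoint $b$, both complexes $\ell^\infty(\Gamma^{\bullet}, W)$ and $\ell^\infty(S^{\bullet}, W)$ are strong resolutions of any dual Banach $\Gamma$-module $W$ by relatively injective modules — for the $S$-complex, strongness comes from the contracting homotopy that inserts $b$, and relative injectivity from the amenability of the point stabilizers of $S$. The orbit map $o_b^\bullet$, being pullback along the $\Gamma$-equivariant map $\Gamma^{\bullet} \to S^{\bullet}$, $(g_1, \ldots, g_m) \mapsto (g_1 b, \ldots, g_m b)$, is a $\Gamma$-morphism of complexes lifting $\mathrm{id}_W$, and hence by the fundamental lemma of homological algebra it is a $\Gamma$-homotopy equivalence. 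Moreover $o_b^\bullet$, a choice of homotopy inverse $p_b^\bullet$, and the connecting homotopies can all be taken \emph{natural in the coefficient module} $W$ — they are assembled from pullbacks along $\Gamma$-maps of the underlying sets and from the conditional expectation witnessing amenability of the stabilizers, none of which sees the module structure — and with operator norms bounded by constants depending only on the degree. For countable $S$ one has $L^\infty_{w*}(S^m, V_n) = \ell^\infty(S^m, V_n)$, so this is exactly the scalar-coefficient complex underlying the one in the statement.

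\emph{Internal extension.} Given such data, I would extend each map coordinatewise: for a sequence $(V_n)$ of Banach spaces with associated internal module $\V$, naturality in the coefficients lets us apply $o_b^m, p_b^m$ and the homotopies to each $V_n$, and then form the internal maps $\prod_\omega (\cdot)_{V_n}$ on the spaces $\LL((\hS)^m, \V)$ and $\LL((\hGamma)^m, \V)$. The uniform-in-$n$ norm bounds ensure these internal maps send $\LLb$ into $\LLb$ and $\LLinf$ into $\LLinf$, so they descend to the real Banach complexes $\LLtilde((\hS)^{\bullet}, \V)$ and $\LLtilde((\hGamma)^{\bullet}, \V)$. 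Since the extension is compatible with composition, identities, and the coboundary maps $d^{\bullet}$ of (\ref{coboundary}), the classical homotopy identities persist, so $o_b^\bullet$ descends to a chain homotopy equivalence between the two $\LLtilde$-complexes. One then checks $\hGamma$-equivariance for the asymptotic actions $\rho^{\bullet}$ of (\ref{rho}): the coordinate-shift part of $\rho^m(g)$ is handled by the classical $\Gamma$-equivariance of the maps, while the remaining twist by $\pi_n(g^{(n)})$ — a bounded linear endomorphism of $V_n$ — commutes with our maps precisely because they are natural in the coefficients. Hence all the maps restrict to the $\hGamma$-invariant subcomplexes, and $o_b^\bullet$ induces an isomorphism on their cohomology, i.e. between $\HH^\bullet_a(\Gamma, \V)$ (computed from the $\hGamma$-complex by definition) and the cohomology of the $\hS$-complex. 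The case of dual $\V$ that is not literally of this shape is handled, as in the paragraph preceding \cite[Theorem 4.20]{mainref}, by the standard identification of $L^\infty_{w*}$-spaces with duals of $L^1$-type spaces built from $\V^{\flat}$, with respect to which $o_b^\bullet$ is still the orbit map.

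\emph{Main obstacle.} I expect the only point requiring real care to be the verification that the internally extended maps are $\hGamma$-equivariant for the \emph{asymptotic} action $\rho^{\bullet}$ — whose twisting factor $\pi$ is an honest action only after passing to $\Vtilde$ — together with the uniformity of the norm bounds needed to descend to the $\LLtilde$-complexes. Both are forced by the single observation that the classical chain maps and homotopies are canonical, hence literally the same map for every coefficient space $V_n$; making this observation precise (in particular pinning down a homotopy inverse and homotopies that are genuinely natural in the coefficients, and not merely abstractly produced by the fundamental lemma) is the one step that deserves to be written out in full.
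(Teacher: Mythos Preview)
Your proposal is correct and follows essentially the same route as the paper: both argue that the proof of Theorem \ref{thm:zimmer} proceeds by internally extending a $\Gamma$-homotopy equivalence between the scalar complexes $L^\infty(\Gamma^\bullet)$ and $L^\infty(S^\bullet)$, and that in the discrete case this equivalence can be taken to be the orbit map (citing \cite[Section 4.9]{frigerio}), with the dual-coefficient case handled via the identifications preceding \cite[Theorem 4.20]{mainref}. Your write-up is considerably more detailed than the paper's --- which treats the result as an immediate specialization and gives only the paragraph preceding the theorem statement as justification --- but the underlying argument is the same.
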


In the two basic examples of discrete Zimmer-amenable spaces from above, we obtain:

\begin{corollary}
\label{cor:zimmer:subgroup}

Let $\Lambda \leq \Gamma$ be a subgroup, and let $\V$ be a dual asymptotic Banach $\hGamma$-module, which restricts to a dual asymptotic Banach $\hLambda$-module. Then the restriction of cochains $\LL((\hGamma)^m, \V) \to \LL((\hLambda)^m, \V)$ induces an isomorphism between $\HH^\bullet_a(\Lambda, \V)$ and the cohomology of the complex:
$$0 \xrightarrow{\tilde{d^0}} \LLtilde(\hGamma, \V)^{\hLambda} \xrightarrow{\tilde{d^1}} \LLtilde((\hGamma)^2, \V)^{\hLambda} \xrightarrow{\tilde{d^2}} \LLtilde((\hGamma)^3, \V)^{\hLambda} \xrightarrow{\tilde{d^3}} \cdots$$
\end{corollary}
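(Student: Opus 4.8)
The plan is to obtain this as the special case of Theorem~\ref{thm:zimmer:precise} applied to the group $\Lambda$ and the $\Lambda$-space $S = \Gamma$, where $\Lambda$ acts on $\Gamma$ by left multiplication. As recorded in the first of the two observations preceding Theorem~\ref{thm:zimmer:precise}, this action is free, so the discrete $\Lambda$-space $\Gamma$ is Zimmer-amenable; and $\V$ is a dual asymptotic Banach $\hLambda$-module by hypothesis (this in fact always holds, since restriction of an asymptotic action commutes with passing to the contragredient, so $\V = (\V^{\flat})^{\#}$ over $\hGamma$ restricts to $(\V^{\flat}|_{\hLambda})^{\#}$ over $\hLambda$). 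Hence Theorem~\ref{thm:zimmer:precise} applies to this data, for any choice of basepoint.

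I would then take the basepoint to be the identity $e \in \Gamma$. Since $\hS = \hGamma$ and $\lambda \cdot e = \lambda$ for $\lambda \in \Lambda$, the orbit map
\[
o_e^m : \LL((\hGamma)^m, \V) \longrightarrow \LL((\hLambda)^m, \V), \qquad o_e^m(f)(\lambda_1, \dots, \lambda_m) = f(\lambda_1, \dots, \lambda_m),
\]
is precisely the restriction of cochains from $\Gamma^m$ to $\Lambda^m$. Theorem~\ref{thm:zimmer:precise} then asserts that this map induces an isomorphism between $\HH^\bullet_a(\Lambda, \V)$ and the cohomology of the complex $0 \to \LLtilde(\hGamma, \V)^{\hLambda} \to \LLtilde((\hGamma)^2, \V)^{\hLambda} \to \cdots$, which is exactly the claim.

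As this is a direct specialization, I do not anticipate a genuine obstacle; the only points meriting a remark are (a) that a subgroup acts freely, hence Zimmer-amenably, on the ambient group, and (b) the identification of the orbit map attached to the basepoint $e$ with the honest restriction map --- a different basepoint $\gamma_0 \in \Gamma$ would give restriction composed with right translation by $\gamma_0$, still a quasi-isomorphism but less transparent. One may also note directly that restriction of cochains does not increase the supremum norm, so it carries $\LLb$ into $\LLb$ and $\LLinf$ into $\LLinf$, descends to the $\LLtilde$-level complexes, is $\hLambda$-equivariant and commutes with the coboundary maps; but all of this is already built into Theorem~\ref{thm:zimmer:precise}.
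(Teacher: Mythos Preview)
Your proof is correct and follows exactly the same approach as the paper: both view $\Gamma$ as a discrete Zimmer-amenable $\Lambda$-space (via the free left multiplication action), choose the identity as basepoint, observe that the resulting orbit map is the restriction of cochains, and invoke Theorem~\ref{thm:zimmer:precise}. The paper's proof is simply a one-sentence version of yours.
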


\begin{proof}
Seeing $\Gamma$ as a discrete Zimmer-amenable $\Lambda$-space, with basepoint $1 \in \Gamma$, the orbit map is nothing but the restriction of cochains, and we conclude by Theorem \ref{thm:zimmer:precise}.
\end{proof}

\begin{corollary}
\label{cor:zimmer:quotient}

Let $N \leq \Gamma$ be an amenable normal subgroup, and let $\V$ be a dual asymptotic Banach ${}^{*}(\Gamma/N)$-module, which pulls back to a dual asymptotic Banach $\hGamma$-module. Then the pullback of cochains $\LL(({}^{*}(\Gamma/N))^m, \V) \to \LL((\hGamma)^m, \V)$ induces an isomorphism between $\HH^\bullet_a(\Gamma, \V)$ and $\HH^\bullet_a(\Gamma / N, \V)$.
\end{corollary}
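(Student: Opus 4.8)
The plan is to replay the proof of Corollary~\ref{cor:zimmer:subgroup}, this time using the \emph{second} basic example of a discrete Zimmer-amenable space: since $N$ is amenable and normal, the coset space $\Gamma/N$ with its left-multiplication $\Gamma$-action is a discrete Zimmer-amenable $\Gamma$-space, all of whose point stabilizers equal $N$. First I would apply Theorem~\ref{thm:zimmer:precise} with $S = \Gamma/N$ and basepoint $b = eN$. This identifies $\HH^\bullet_a(\Gamma, \V)$ with the cohomology of the complex
$$0 \xrightarrow{\tilde{d^0}} \LLtilde({}^{*}(\Gamma/N), \V)^{\hGamma} \xrightarrow{\tilde{d^1}} \LLtilde(({}^{*}(\Gamma/N))^2, \V)^{\hGamma} \xrightarrow{\tilde{d^2}} \cdots,$$
the isomorphism being induced by the orbit map, which here is exactly $o_b^m(f)(g_1,\dots,g_m) = f(g_1 N,\dots,g_m N)$, i.e.\ the pullback of cochains along the quotient map $q \colon \Gamma \to \Gamma/N$.

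The one remaining point is to recognize this complex as the complex \emph{defining} $\HH^\bullet_a(\Gamma/N, \V)$. The key observation is that, because $N$ is normal, the $\Gamma$-action on the coset space $\Gamma/N$ is the pullback along $q$ of the left-translation action of the group $\Gamma/N$ on itself; in particular $N$ acts trivially on $\Gamma/N$, hence so does ${}^{*}N$ by transfer. Together with the hypothesis that $\V$ is pulled back from a dual asymptotic Banach ${}^{*}(\Gamma/N)$-module — so that ${}^{*}N$ acts trivially on $\V$ as well — this shows that the asymptotic $\hGamma$-action $\rho^m$ on $\LL(({}^{*}(\Gamma/N))^m, \V)$ factors through ${}^{*}(\Gamma/N) \cong \hGamma/{}^{*}N$. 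Consequently the $\hGamma$-invariants and the ${}^{*}(\Gamma/N)$-invariants of each $\LLtilde(({}^{*}(\Gamma/N))^m, \V)$ coincide, so the complex above is literally the defining complex of $\HH^\bullet_a(\Gamma/N, \V)$, and the pullback of cochains realizes the asserted isomorphism.

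I do not expect a real obstacle here: the content is bookkeeping — matching up the invariance subgroups and checking that the orbit map is the pullback along $q$. The points that deserve to be spelled out carefully are the identification ${}^{*}(\Gamma/N) \cong \hGamma/{}^{*}N$ and the two ``factoring'' claims (that ${}^{*}N$ acts trivially on ${}^{*}(\Gamma/N)$ and on $\V$), which are instances of the transfer principle applied to first-order statements valid in $\Gamma$ and in the coefficient module. It is also worth remarking that $\Gamma/N$ being discrete means $L^\infty_{w*}((\Gamma/N)^m, V_n) = \ell^\infty((\Gamma/N)^m, V_n)$, so the two cochain complexes in play are built from the same internal Banach spaces and only the group of invariants is being changed.
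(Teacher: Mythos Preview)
Your proposal is correct and follows essentially the same approach as the paper: apply Theorem~\ref{thm:zimmer:precise} with $S=\Gamma/N$ and basepoint the trivial coset, identify the orbit map with the pullback of cochains, and then observe that the $\hGamma$-action on both ${}^{*}(\Gamma/N)$ and $\V$ factors through ${}^{*}(\Gamma/N)$, so the $\hGamma$-invariant complex coincides with the defining complex for $\HH^\bullet_a(\Gamma/N,\V)$. Your write-up is in fact more careful than the paper's, spelling out the transfer-principle justifications and the $L^\infty_{w*}=\ell^\infty$ identification that the paper leaves implicit.
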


\begin{proof}
Seeing $\Gamma/N$ as a discrete Zimmer-amenable $\Gamma$-space, with basepoint the coset $N$, the orbit map is nothing but the pullback of cochains. So Theorem \ref{thm:zimmer:precise} yields an isomorphism between $\HH^\bullet_a(\Gamma, \V)$ and the cohomology of the complex:
$$0 \xrightarrow{\tilde{d^0}} \LLtilde({}^{*}(\Gamma/N), \V)^{\hGamma} \xrightarrow{\tilde{d^1}} \LLtilde(({}^{*}(\Gamma/N))^2, \V)^{\hGamma} \xrightarrow{\tilde{d^2}} \LLtilde(({}^{*}(\Gamma/N))^3, \V)^{\hGamma} \xrightarrow{\tilde{d^3}} \cdots$$
But since the action of $\hGamma$ on both ${}^{*}(\Gamma/N)$ and $\V$ factors through ${}^{*}(\Gamma/N)$, the above complex coincides with the standard one computing $\HH^\bullet_a(\Gamma/N, \V)$.
\end{proof}

We will use these explicit isomorphisms in this section. Later, for the proof of Theorem \ref{intro:thm:lamplighters:ac}, non-discrete Zimmer-amenable spaces will also appear, but in that case we will only need the existence of an abstract isomorphism as in Theorem \ref{thm:zimmer}.

\subsection{Restrictions and coamenability}

Let $\Lambda \leq \Gamma$ be a (not necessarily coamenable) subgroup, and $\V$ be a dual asymptotic Banach $\hGamma$-module, which restricts to a dual asymptotic Banach $\hLambda$-module.
The restriction $\LLtilde((\hGamma)^\bullet, \V)^{\hGamma} \to \LLtilde((\hLambda)^\bullet, \V)^{\hLambda}$ induces a map in cohomology, called the \emph{restriction map}, and denoted
$$\res^\bullet : \Ha^\bullet(\Gamma, \V) \to \Ha^\bullet(\Lambda, \V).$$
This map behaves well with respect to Ulam classes (Definition \ref{def:Ulam:class}):

\begin{lemma}
\label{lem:restriction:Ulam}

Let $\W$ be an Ulam $\hGamma$-module supported on $\Uf$. Then $\W$ is also an Ulam $\hLambda$-module supported on $\Uf$, and the restriction map $\res^2 : \Ha^2(\Gamma, \W) \to \Ha^2(\Lambda, \W)$ sends Ulam classes to Ulam classes.
\end{lemma}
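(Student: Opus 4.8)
The plan is to unwind the two definitions — ``Ulam $\hGamma$-module supported on $\Uf$'' and ``Ulam class supported on $\Uf$'' — and check that both are inherited by $\Lambda$ via the obvious restriction of the data. Recall that an Ulam $\hGamma$-module is built from a uniform asymptotic homomorphism $\phi : \Gamma \to \Uf$, with $\W = \prod_\omega \uf(k_n)$ and asymptotic action $\pi(g)v = \phi(g) v \phi(g)^{-1}$; the associated Ulam class is represented by the cocycle $\alpha$ of \eqref{cocycle}, namely $\alpha(g_1,g_2) = \tfrac{1}{\eps}\log(\phi(g_1)\phi(g_2)\phi(g_1g_2)^{-1})$.

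First I would observe that restricting $\phi$ to $\Lambda$ yields a uniform asymptotic homomorphism $\phi|_\Lambda : \Lambda \to \Uf$ with the same range degrees $k_n$ and with $\defe(\phi|_\Lambda) \leq \defe(\phi) \leq_\omega \eps$, so in particular $\defe(\phi|_\Lambda) \in \hRinf$ and we may use the same infinitesimal $\eps$ as the linear scale. The asymptotic action $\pi$ on $\W$, restricted along the inclusion $\hLambda \hookrightarrow \hGamma$, is then precisely the conjugation action $v \mapsto \phi(\lambda) v \phi(\lambda)^{-1}$ coming from $\phi|_\Lambda$; hence $\W$, with this restricted action, is by definition an Ulam $\hLambda$-module supported on $\Uf$. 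This handles the first assertion.

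Next I would check that the restriction map sends the Ulam class of $\phi$ to the Ulam class of $\phi|_\Lambda$. At the level of inhomogeneous cochains, the restriction map is literally restriction of functions on $\hGamma \times \hGamma$ to functions on $\hLambda \times \hLambda$; applied to $\alpha$ it gives the function $(\lambda_1,\lambda_2) \mapsto \tfrac{1}{\eps}\log(\phi(\lambda_1)\phi(\lambda_2)\phi(\lambda_1\lambda_2)^{-1})$, which is exactly the cocycle $\alpha_{\phi|_\Lambda}$ defining the Ulam class of $\phi|_\Lambda$. One should say a word about why this commutes with passing to the homogeneous invariant resolution used to define $\Ha^2$: the correspondence of \cite[Theorem 4.2.4]{mainref} between inhomogeneous cocycles and invariant homogeneous cocycles is natural with respect to group homomorphisms (here the inclusion $\Lambda \leq \Gamma$), so restriction of the inhomogeneous cocycle corresponds to restriction of the homogeneous one, which is the chain-level map inducing $\res^2$. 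Since every Ulam class on $\Gamma$ arises this way from some $\phi$, and its image is the corresponding Ulam class on $\Lambda$, the lemma follows.

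The only real subtlety — and the step I would be most careful about — is the bookkeeping ensuring that the module structure and the normalization by $\eps$ are genuinely the same on both sides, i.e. that restricting the $\hGamma$-module $\W$ really does reproduce the canonical Ulam $\hLambda$-module attached to $\phi|_\Lambda$ rather than merely an isomorphic one; this is where one must be explicit that $\phi|_\Lambda$ is the uniform asymptotic homomorphism witnessing the Ulam structure on the $\Lambda$ side, and that $\eps$ (an upper bound for $\defe(\phi)$, hence also for $\defe(\phi|_\Lambda)$) is a legitimate choice of scale. Everything else is a direct unwinding of definitions together with the naturality of the bar-resolution correspondence, so I do not anticipate any analytic difficulty.
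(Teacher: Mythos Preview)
Your proposal is correct and follows essentially the same approach as the paper: restrict the uniform asymptotic homomorphism $\phi$ to $\Lambda$, observe that this gives $\W$ its Ulam $\hLambda$-module structure, and note that the restricted inhomogeneous cocycle $\alpha|_{\hLambda \times \hLambda}$ is precisely the Ulam cocycle for $\phi|_\Lambda$ (using $\defe(\phi|_\Lambda) \leq_\omega \eps$). If anything, you are slightly more explicit than the paper about the naturality of the inhomogeneous/homogeneous correspondence and the legitimacy of reusing the scale $\eps$, but the argument is the same.
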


\begin{proof}
Let $\phi : \Gamma \to \Uf$ be a uniform asymptotic homomorphism, and let $\W$ be the corresponding Ulam $\hGamma$-module. Then restricting $\phi_n$ to $\Lambda$ for each $n$ yields a uniform asymptotic homomorphism $\phi|_{\Lambda} : \Lambda \to \Uf$, with $\defe(\phi|_\Lambda) \leq_{\omega} \defe(\phi)$ and endows $\W$ with an asymptotic $\hLambda$-action making it into an Ulam $\hLambda$-module supported on $\Uf$. The cocycle corresponding to $\phi$ is defined via the map
$$\alpha : \hGamma \times \hGamma \to \W : (g_1, g_2) \mapsto \frac{1}{\eps} \log(\phi(g_1)\phi(g_2)\phi(g_1 g_2)^{-1}).$$
Since $\defe(\phi|_{\Lambda}) \leq_{\omega} \eps$, restricting $\alpha$ to $\hLambda \times \hLambda$ yields a valid cocycle associated to the uniform asymptotic homomorphism $\phi_{\Lambda}$. It follows that the chain map $\LLtilde((\hGamma)^\bullet, \V)^{\hGamma} \to \LLtilde((\hLambda)^\bullet, \V)^{\hLambda}$ preserves the set of cocycles defined via uniform asymptotic homomorphisms, and therefore preserves Ulam classes.
\end{proof}

Now suppose that $\Lambda \leq \Gamma$ is coamenable. This means, by definition, that there exists a $\Gamma$-invariant mean on $\Gamma / \Lambda$; that is, there exists a linear functional $m : \ell^\infty(\Gamma/\Lambda) \to \mathbb{R}$ such that
\begin{enumerate}
    \item $m(1_{\Gamma / \Lambda}) = 1$, where $1_{\Gamma / \Lambda}$ denotes the constant function.
    \item $| m(f) | \leq  \| f \|$ for all $f \in \ell^\infty(\Gamma / \Lambda)$.
    \item $m(g \cdot f) = m(f)$ for all $g \in \Gamma$ and all $f \in \ell^\infty(\Gamma / \Lambda)$.
\end{enumerate}

As with the absolute case \cite[Lemma 3.20]{mainref}, we have the following:

\begin{lemma}
\label{lem:internalmean}

Suppose that $\Lambda \leq \Gamma$ is coamenable, and let $\V$ be a dual asymptotic Banach $\hGamma$-module. Then there exists an internal map $m : \LL(\hGamma / \hLambda, \V) \to \V$ which induces a map $\tilde{m} : \LLtilde(\hGamma / \hLambda, \V) \to \tilde{\V}$ with the following properties:
\begin{enumerate}
    \item If $\tilde{f}$ is the constant function equal to $\tilde{v} \in \Vtilde$, then $\tilde{m}(\tilde{f}) = \tilde{v}$.
    \item $\|\tilde{m}(\tilde{f})\| \leq \| \tilde{f} \|$ for all $\tilde{f} \in \LLtilde(\hGamma / \hLambda, \V)$.
    \item $\tilde{m}(g \cdot \tilde{f}) = \tilde{\pi}(g) \tilde{m}(\tilde{f})$ for all $g \in \hGamma$ and all $\tilde{f} \in \LLtilde(\hGamma / \hLambda, \V)$.
\end{enumerate}
\end{lemma}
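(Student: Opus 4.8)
The plan is to transfer the classical mean $m : \ell^\infty(\Gamma/\Lambda) \to \mathbb{R}$ through the ultraproduct construction, in parallel with the proof of the absolute statement \cite[Lemma 3.20]{mainref}. First I would record the ``internal'' version of the mean: since $m$ is an element of the dual of the separable Banach space $\ell^\infty(\Gamma/\Lambda)$ — or rather, since its defining properties (1)--(3) are first-order in the relevant sense — the constant sequence $(m)_\omega$ gives an internal functional ${}^*m : {}^*\ell^\infty(\Gamma/\Lambda) \to \hR$ which is ${}^*\Gamma$-invariant, has internal norm $\leq 1$, and sends the constant function $1$ to $1$; moreover $\hGamma/\hLambda$ is naturally identified (as an internal ${}^*\Gamma$-set) with the internal set ${}^*(\Gamma/\Lambda)$, so ${}^*m$ is a functional on internal scalar-valued functions on $\hGamma/\hLambda$. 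The task is to upgrade this to a map valued in the vector-valued module $\V$.

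The key step is the vector-valued extension. Write $\V = \prod_\omega V_n$; since $\V$ is a \emph{dual} module, $\V = (\V^\flat)^\#$ with $\V^\flat = \prod_\omega V_n^\flat$. Given $f \in \LL(\hGamma/\hLambda, \V)$, represented coordinatewise by $f_n \in \ell^\infty(\Gamma/\Lambda, V_n)$, I would define $m(f) \in \V$ coordinatewise by the weak-$*$ integral against $m$: for each $n$, $\langle m_n(f_n), \xi \rangle := m\big( x \mapsto \langle f_n(x), \xi\rangle \big)$ for $\xi \in V_n^\flat$; this $m_n(f_n)$ exists in $V_n = (V_n^\flat)^\#$ because $V_n^\flat$ is a Banach space and $x \mapsto \langle f_n(x),\xi\rangle$ lies in $\ell^\infty(\Gamma/\Lambda)$, so $m$ applied to it is a bounded linear functional of $\xi$ of norm $\leq \|f_n\|_\infty$. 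Taking the ultraproduct of the $m_n$ gives the internal map $m : \LL(\hGamma/\hLambda, \V) \to \V$. Its norm-non-increasing property $\|m(f)\| \le \|f\|$ holds coordinatewise hence internally; invariance $m(g\cdot f) = m(f)$ for $g \in \hGamma$ follows from the ${}^*\Gamma$-invariance of ${}^*m$ (equivalently, from $\Gamma$-invariance of $m$ coordinatewise together with {\L}o\'s); and $m$ applied to a constant function equal to $v \in \V$ returns $v$, again coordinatewise. Finally, since $m$ preserves $\|\cdot\|$ up to $\leq$, it maps $\LLb(\hGamma/\hLambda, \V)$ into $\Vb$ and $\LLinf(\hGamma/\hLambda, \V)$ into $\Vinf$, so it descends to $\tilde m : \LLtilde(\hGamma/\hLambda,\V) \to \tilde\V$, and properties (1)--(3) pass to the quotient verbatim.

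The main obstacle I anticipate is not in the limiting/ultrafilter formalism — which is routine given the machinery already set up in Section \ref{s:preli} — but in handling the weak-$*$ measurability/definability subtleties of the vector-valued construction: one must check that $m_n(f_n)$ genuinely defines an element of the \emph{internal} dual $\V = \V^\# $ of $\V^\flat$ (as opposed to the full Banach-space dual of $\tilde{\V^\flat}$), i.e.\ that the coordinatewise definition is compatible with the ultraproduct structure and that the internality of $f$ is preserved. This is exactly the point where the \emph{dual} hypothesis on $\V$ is used, and it mirrors the corresponding delicate point in \cite[Lemma 3.20]{mainref}; I would model the argument on that proof, the only new feature being that the mean is on $\Gamma/\Lambda$ rather than on $\Gamma$ itself, which changes nothing essential since $\Gamma$ acts on $\ell^\infty(\Gamma/\Lambda)$ by the same formula and coamenability gives precisely the invariance needed.
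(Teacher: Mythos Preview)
Your proposal is correct and follows essentially the same route as the paper: define the vector-valued mean coordinatewise via the weak-$*$ pairing with the predual $\V^\flat$ (this is exactly where the dual hypothesis enters), take the ultraproduct to get the internal map, and check that the norm bound lets everything descend to the quotient. The paper phrases the pairing at the ultraproduct level (for $\lambda \in \V^\flat$) rather than at each coordinate (for $\xi \in V_n^\flat$), but the construction is identical, and your anticipated ``obstacle'' about internality is handled just as briefly there as you suggest.
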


\begin{proof}
Consider $f=\{f_n\}_{\omega} \in \LL(\hGamma / \hLambda, \V)$. Since $\V$ is a dual asymptotic ${}^*\Gamma$-module with predual $\V^{\flat}$, for each $\lambda \in \V^{\flat}$, we get an internal map 
$$f^{\lambda}:\hGamma/\hLambda \to {}^*\R : x \mapsto f(x)(\lambda).$$
Note that $f^{\lambda}$ being internal, it is of the form $\{f^{\lambda}_n\}_{\omega}$ where $f^{\lambda}_n \in \ell^{\infty}(\Gamma/\Lambda)$. This allows us to construct the internal map $m_{in}^{\lambda}:\LL(\hGamma / \hLambda, \V) \to {}^*\R$ as
$$m_{in}^{\lambda}(f)=\{m\left( f^{\lambda}_n   \right)\}_{\omega}$$
and finally $m_{in}:\LL(\hGamma / \hLambda, \V)  \to \V$ as
$$m_{in}(f)(\lambda) = m_{in}^{\lambda}(f)$$
It is straightforward to check that $m_{in}$ as defined induces a linear map $\tilde{m}:\LLtilde(\hGamma / \hLambda, \V) \to \tilde{\V}$. As for ${}^*\Gamma$-equivariance, this follows from the observation that $(g \cdot f)^{\lambda}(x)=\pi(g)f(g^{-1}x)(\lambda)$ while $(g \cdot f^{\lambda})(x)=f(g^{-1}x)(\lambda)$. The conditions on $\tilde{m}$ follow from the definition and properties of the $\Gamma$-invariant mean $m$ on $\ell^{\infty}(\Gamma/\Lambda)$. 
\end{proof}

We are now ready to prove Proposition \ref{intro:prop:coamenable:ac}. The proof goes along the lines of \cite[Proposition 8.6.2]{monod:book}.

\begin{proposition*}[Proposition \ref{intro:prop:coamenable:ac}]

Let $\Lambda \leq \Gamma$ be coamenable. Then the restriction map $\Ha^n(\Gamma, \V) \to \Ha^n(\Lambda, \V)$ is injective, for all $n \geq 0$ and all dual asymptotic Banach $\hGamma$-modules $\V$.
\end{proposition*}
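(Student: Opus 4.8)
The plan is to mimic the classical bounded-cohomology argument: construct a transfer (averaging) map that is a one-sided inverse to restriction. Concretely, I would first use Corollary \ref{cor:zimmer:subgroup} to compute $\Ha^\bullet(\Lambda, \V)$ using the complex $\LLtilde((\hGamma)^\bullet, \V)^{\hLambda}$, so that the restriction map $\res^\bullet$ is literally induced by the inclusion of $\hGamma$-invariant cochains into $\hLambda$-invariant cochains, $\LLtilde((\hGamma)^\bullet, \V)^{\hGamma} \hookrightarrow \LLtilde((\hGamma)^\bullet, \V)^{\hLambda}$. This reduces the problem to constructing a chain map $\trans^\bullet$ going the other way, from $\hLambda$-invariant cochains to $\hGamma$-invariant cochains, which is the identity on $\hGamma$-invariant cochains.

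Next I would build $\trans^m$ using the internal mean from Lemma \ref{lem:internalmean}. Given $\tilde{f} \in \LLtilde((\hGamma)^m, \V)^{\hLambda}$, the function $g_0\hLambda \mapsto \tilde{f}(g g_0^{-1}, \ldots)$-type expression is well-defined on $\hGamma/\hLambda$ precisely because $\tilde{f}$ is $\hLambda$-invariant; more carefully, I would define
$$\trans^m(\tilde{f})(g_1, \ldots, g_m) := \tilde{m}\big( x \mapsto \tilde{f}(x^{-1}g_1, \ldots, x^{-1}g_m) \big),$$
interpreting $x$ as ranging over $\hGamma/\hLambda$ and checking that the integrand descends to a well-defined element of $\LLtilde(\hGamma/\hLambda, \V)$ by $\hLambda$-invariance of $\tilde{f}$ together with the $\hLambda$-action on $\V$. (Here one must be slightly careful: the cochain $\tilde f$ takes values in $\V$, so the map being averaged is $x \mapsto \tilde\pi(x)^{-1}\tilde{f}(x^{-1}g_1,\dots,x^{-1}g_m)$ or a variant, chosen so that the result is genuinely a function on the coset space and the final map is $\hGamma$-equivariant.) Then one verifies: $\trans^m$ lands in $\hGamma$-invariant cochains, using property (3) of $\tilde m$ ($\hGamma$-invariance of the mean); $\trans^\bullet$ commutes with $\tilde{d}^\bullet$, which is a direct computation since the coboundary is a finite alternating sum and $\tilde m$ is linear and commutes with it; and $\trans^m \circ \res^m = \id$ on $\LLtilde((\hGamma)^m, \V)^{\hGamma}$, using property (1) of $\tilde m$ (it returns the value of a constant function) — indeed if $\tilde f$ is already $\hGamma$-invariant then $x \mapsto \tilde\pi(x)^{-1}\tilde f(x^{-1}g_1,\dots)$ is the constant function equal to $\tilde f(g_1,\dots,g_m)$.

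Having a chain map $\trans^\bullet$ with $\trans^\bullet \circ \res^\bullet = \id$ at the cochain level, passing to cohomology gives $\trans_*^n \circ \res_*^n = \id$ on $\Ha^n(\Gamma, \V)$, whence $\res_*^n$ is injective. This completes the argument.

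The main obstacle I anticipate is purely technical rather than conceptual: making sure that all the maps in sight are genuinely \emph{internal} (ultraproducts of standard maps), that the averaged function is well-defined on the coset space $\hGamma/\hLambda$ — which forces a careful choice of the twist by $\tilde\pi(x)$ in the definition of $\trans^m$ so that $\hLambda$-invariance of $\tilde f$ is exactly what is needed — and that $\trans^m$ preserves boundedness so it descends from $\LLb$ to $\LLtilde$. The verification that $\tilde m$ commutes with $\tilde d^m$ and the equivariance/invariance bookkeeping also require care, but follow the template of \cite[Proposition 8.6.2]{monod:book} closely. One subtlety specific to the asymptotic setting is checking that $\hGamma/\hLambda$ can be identified with (or at least maps appropriately to) ${}^*(\Gamma/\Lambda)$ so that Lemma \ref{lem:internalmean} applies as stated; this should be routine since $\Lambda \le \Gamma$ is a standard inclusion of countable groups.
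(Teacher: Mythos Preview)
Your proposal is correct and follows essentially the same approach as the paper: reduce via Corollary~\ref{cor:zimmer:subgroup} to the inclusion $\LLtilde((\hGamma)^\bullet,\V)^{\hGamma}\hookrightarrow\LLtilde((\hGamma)^\bullet,\V)^{\hLambda}$, then build a transfer by averaging $g\mapsto \pi(g)f(g^{-1}x)$ over $\hGamma/\hLambda$ using the internal mean of Lemma~\ref{lem:internalmean} (note the twist is $\pi(g)$, not $\pi(g)^{-1}$), and check it is a one-sided inverse to restriction. The paper handles the coset-space issue you flag simply by choosing representatives of left $\hLambda$-cosets in $\hGamma$.
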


\begin{proof}
We implement the asymptotic cohomology of $\Lambda$ using the complex $\LLtilde((\hGamma)^\bullet, \V)^{\hLambda}$ from Corollary \ref{cor:zimmer:subgroup}. Since the chain map that defines the restriction map factors through this complex, and the chain map $\LLtilde((\hGamma)^\bullet, \V)^{\hLambda} \to \LLtilde((\hLambda)^\bullet, \V)^{\hLambda}$ induces an isomorphism in cohomology (Corollary \ref{cor:zimmer:subgroup}), it suffices to show that the chain inclusion $\LLtilde((\hGamma)^\bullet, \V)^{\hGamma} \to \LLtilde((\hGamma)^\bullet, \V)^{\hLambda}$ induces an injective map in cohomology. Henceforth, we will refer to this as \emph{the restriction map}.

Our goal is construct a \emph{transfer} map, that is a linear map $\trans^\bullet : \Ha^\bullet(\Lambda, \V) \to \Ha^\bullet(\Gamma, \V)$ such that $\trans^\bullet \circ \res^\bullet$ is the identity on $\Ha^\bullet(\Lambda, \V)$. Then it follows at once that $\res^\bullet$ must be injective.
By the above paragraph, we may do this by constructing an internal chain map $\widetilde{\trans}^\bullet : \LLtilde((\hGamma)^\bullet, \V)^{\hLambda} \to \LLtilde((\hGamma)^\bullet, \V)^{\hGamma}$ that restricts to the identity on $\LLtilde((\hGamma)^\bullet, \V)^{\hGamma}$. \\

Let $f \in \LL((\hGamma)^k, \V)$ be such that $\tilde{f} \in \LLtilde((\hGamma)^k, \V)^{\hLambda}$. For each $x \in (\hGamma)^k$, define
$$f_x : \hGamma \to \V$$
$$f_x(g) := \pi(g)f(g^{-1}x)$$
In other words, $f_x(g)$ is just $(\rho^1(g)f)(x)$ as in (\ref{rho}). Since $\tilde{f} \in \LLtilde((\hGamma)^k, \V)^{\hLambda}$, for any $\gamma \in \hLambda$ and $g \in \hGamma$, 
$$f_x(g\gamma)-f_x(g) \in \V_{inf}$$
Let us choose representatives of left $\hLambda$-cosets in $\hGamma$ and restrict $f_x$ to this set of representatives so that we can regard $f_x$ as an internal map $f_x:\hGamma/\hLambda \to \V$. Moreover, since $f_x \in \LL(\hGamma / \hLambda, \V)$, we can apply the mean $m$ constructed in Lemma \ref{lem:internalmean} to define the internal map $\trans^k(f) : \LL((\hGamma)^k, \V) \to \LL((\hGamma)^k, \V)$ by
$$\trans^k(f)(x) = m(f_x)$$
Since $\tilde{m}$ is $\hGamma$-invariant, this means that for $g \in \hGamma$, $m(f_{gx})-\pi(g)m(f_x) \in \V_{inf}$, and implies that
$$(\trans^k(f))(gx)-\pi(g) \trans^k(f)(x) \in \V_{inf}.$$

This establishes that for $f \in \LL((\hGamma)^k, \V)$ with $\tilde{f} \in \LLtilde((\hGamma)^k, \V)^{\hLambda}$, we have $\widetilde{\trans^k(f)} \in \LLtilde((\hGamma)^k, \V)^{\hGamma}$.
Therefore $\trans^\bullet$ induces a chain map $\tilde{\trans}^\bullet : \LLtilde((\hGamma)^\bullet, \V)^{\hLambda} \to \LLtilde((\hGamma)^\bullet, \V)^{\hGamma}$. Finally, if $\tilde{f}$ is already $\hGamma$-invariant, then $f_x$ is constant up to infinitesimals, and thus $m(f_x)$ is equal, up to an infinitesimal, to the value of that constant, which is $f(x)$. This shows that $\widetilde{\trans}^k$ is the identity when restricted to $\LLtilde((\hGamma)^\bullet, \V)^{\hGamma}$, and concludes the proof.
\end{proof}

Proposition \ref{intro:prop:coamenable} is now an easy consequence.

\begin{proposition*}[Proposition \ref{intro:prop:coamenable}]

Let $\Lambda \leq \Gamma$ be coamenable. If $\Lambda$ is uniformly $\Uf$-stable with a linear estimate, then so is $\Gamma$.
\end{proposition*}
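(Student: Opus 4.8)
The plan is to reduce everything to the cohomological statement via Theorem \ref{thm:UlamviaHa}, combined with the injectivity of the restriction map from Proposition \ref{intro:prop:coamenable:ac}. Concretely, let $\phi : \Gamma \to \Uf$ be an arbitrary uniform asymptotic homomorphism, let $\W$ be the associated Ulam $\hGamma$-module supported on $\Uf$, and let $\alpha \in \HH^2_a(\Gamma, \W)$ be the corresponding Ulam class. By Theorem \ref{thm:UlamviaHa}, it suffices to show that $\alpha = 0$.

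First I would invoke Lemma \ref{lem:restriction:Ulam}: restricting $\phi$ to $\Lambda$ gives a uniform asymptotic homomorphism $\phi|_\Lambda : \Lambda \to \Uf$, the module $\W$ becomes an Ulam $\hLambda$-module supported on $\Uf$, and $\res^2(\alpha) \in \HH^2_a(\Lambda, \W)$ is precisely the Ulam class associated to $\phi|_\Lambda$. Since $\Lambda$ is uniformly $\Uf$-stable with a linear estimate, the "only if" direction of Theorem \ref{thm:UlamviaHa} guarantees that every Ulam class supported on $\Uf$ over $\Lambda$ vanishes; in particular $\res^2(\alpha) = 0$.

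Next I would apply Proposition \ref{intro:prop:coamenable:ac}. For this I need $\W$ to be a dual asymptotic Banach $\hGamma$-module, which holds because $\W = \prod_\omega \uf(k_n)$ is finitary, and finite-dimensional Banach spaces are reflexive, so $\W$ is (canonically) its own internal dual. Hence the restriction map $\res^2 : \HH^2_a(\Gamma, \W) \to \HH^2_a(\Lambda, \W)$ is injective, and combined with $\res^2(\alpha) = 0$ this forces $\alpha = 0$. Since $\phi$ was arbitrary, all Ulam classes supported on $\Uf$ over $\Gamma$ vanish, and Theorem \ref{thm:UlamviaHa} then yields that $\Gamma$ is uniformly $\Uf$-stable with a linear estimate.

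I expect the only real subtlety to be bookkeeping rather than a genuine obstacle: one must check that the restriction map appearing in Lemma \ref{lem:restriction:Ulam} and the one in Proposition \ref{intro:prop:coamenable:ac} are literally the same map — they are, both being induced by restriction of cochains $\LLtilde((\hGamma)^\bullet, \W)^{\hGamma} \to \LLtilde((\hLambda)^\bullet, \W)^{\hLambda}$ — and that the finitary Ulam module genuinely qualifies as a dual asymptotic Banach module so that Proposition \ref{intro:prop:coamenable:ac} applies. No quantitative estimate needs to be tracked by hand, since stability \emph{with a linear estimate} is encoded entirely in the vanishing of the degree-$2$ Ulam classes.
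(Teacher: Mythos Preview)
Your proposal is correct and follows essentially the same argument as the paper: invoke Theorem \ref{thm:UlamviaHa} to reduce to vanishing of Ulam classes, use Lemma \ref{lem:restriction:Ulam} to see that restriction sends Ulam classes to Ulam classes (which vanish by stability of $\Lambda$), and conclude via the injectivity of restriction from Proposition \ref{intro:prop:coamenable:ac}. Your added remarks on why $\W$ qualifies as a dual asymptotic Banach module and on the identification of the two restriction maps are sound bookkeeping that the paper leaves implicit.
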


\begin{proof}
Suppose that $\Lambda$ is uniformly $\Uf$-stable with a linear estimate, and let $\Gamma$ be a coamenable supergroup of $\Lambda$. We aim to show that $\Gamma$ is also uniformly $\Uf$-stable with a linear estimate. By Theorem \ref{thm:UlamviaHa}, it suffices to show that all Ulam classes supported on $\Uf$ vanish in $\HH^2_a(\Gamma, \W)$, where $\W$ is an Ulam $\hGamma$-module. Now by Proposition \ref{intro:prop:coamenable:ac}, it suffices to show that the images of such classes under the restriction map $\res^2 : \HH^2_a(\Gamma, \W) \to \HH^2_a(\Lambda, \W)$ vanish, since the latter is injective. By Lemma \ref{lem:restriction:Ulam} these are Ulam classes of $\Lambda$. But since $\Lambda$ is uniformly $\Uf$-stable with a linear estimate, by Theorem \ref{thm:UlamviaHa} again, all Ulam classes in $\HH^2_a(\Lambda, \W)$ vanish, and we conclude.
\end{proof}

\subsection{Pullbacks and amenable kernels}

Let $N \leq \Gamma$ be an amenable normal subgroup, and let $\V$ be a dual asymptotic Banach ${}^{*}(\Gamma/N)$-module, which pulls back to a dual asymptotic Banach $\hGamma$-module. Precomposing cochains by the projection $\hGamma \to {}^{*}(\Gamma/N)$ defines the \emph{pullback} $p^\bullet : \HH^\bullet_a(\Gamma/N, \V) \to \HH^\bullet_a(\Gamma, \V)$. The following can be proven via a similar argument as in Lemma \ref{lem:restriction:Ulam}:

\begin{lemma}
\label{lem:pullback:Ulam}

Let $\W$ be an Ulam ${}^{*}(\Gamma/N)$-module. Then $\W$ is also an Ulam $\hGamma$-module, and the pullback $p^2 : \HH^2_a(\Gamma/N, \W) \to \HH^2_a(\Gamma, \W)$ sends Ulam classes to Ulam classes.
\end{lemma}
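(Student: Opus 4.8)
The plan is to mirror the proof of Lemma~\ref{lem:restriction:Ulam}, replacing restriction to a subgroup with pullback along the quotient projection $q : \hGamma \to {}^{*}(\Gamma/N)$. First I would start with a uniform asymptotic homomorphism $\bar\phi : \Gamma/N \to \Uf$ with $\defe(\bar\phi) \leq_\omega \eps \in \hRinf$, and observe that precomposing each $\bar\phi_n$ with the projection $\Gamma \to \Gamma/N$ yields a uniform asymptotic homomorphism $\phi := \bar\phi \circ q : \Gamma \to \Uf$ with $\defe(\phi) = \defe(\bar\phi) \leq_\omega \eps$ (the defect is literally unchanged, since it is a supremum of the same quantities indexed by a quotient of $\Gamma \times \Gamma$). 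The asymptotic conjugation action $\bar\pi$ of ${}^{*}(\Gamma/N)$ on $\W = \prod_\omega \uf(k_n)$ attached to $\bar\phi$ pulls back along $q$ to the asymptotic $\hGamma$-action $\pi(g)v = \phi(g)v\phi(g)^{-1} = \bar\phi(q(g))v\bar\phi(q(g))^{-1}$ attached to $\phi$; this is exactly the pullback of the dual asymptotic Banach module in the sense used to define $p^\bullet$, so $\W$ is an Ulam $\hGamma$-module supported on $\Uf$.

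Next I would compare the cocycles. The Ulam class of $\bar\phi$ in $\HH^2_a(\Gamma/N, \W)$ is represented by
$$\bar\alpha(\bar g_1, \bar g_2) = \frac{1}{\eps}\log\bigl(\bar\phi(\bar g_1)\bar\phi(\bar g_2)\bar\phi(\bar g_1 \bar g_2)^{-1}\bigr),$$
and its pullback $p^2$ is induced on cochains by $\bar\alpha \mapsto \bar\alpha \circ (q \times q)$. Since $\phi = \bar\phi \circ q$, this precomposition gives precisely
$$\alpha(g_1, g_2) = \frac{1}{\eps}\log\bigl(\phi(g_1)\phi(g_2)\phi(g_1 g_2)^{-1}\bigr),$$
which is a valid choice of cocycle associated to the uniform asymptotic homomorphism $\phi$ (the normalization by $\eps$ is legitimate because $\defe(\phi) \leq_\omega \eps$, exactly as in Lemma~\ref{lem:restriction:Ulam}). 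Hence the chain map defining $p^\bullet$ sends the cocycle representing an Ulam class of $\Gamma/N$ to a cocycle representing an Ulam class of $\Gamma$, and therefore $p^2$ sends Ulam classes to Ulam classes.

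This argument is essentially formal, so there is no serious obstacle; the only point requiring a little care is the bookkeeping around the scaling parameter $\eps$ and the choice of branch of $\log$, to ensure that the pulled-back cochain is genuinely of the form \eqref{cocycle} for the uniform asymptotic homomorphism $\phi$ and not merely cohomologous to one. As in Lemma~\ref{lem:restriction:Ulam}, this is handled by noting that any $\eps' \in \hRinf$ with $\defe(\phi) \leq_\omega \eps'$ yields an admissible cocycle, and here we may simply take $\eps' = \eps$. I would therefore present the proof as: ``The argument is identical to that of Lemma~\ref{lem:restriction:Ulam}, with the restriction map $\res^2$ replaced by the pullback $p^2$ along the projection $\hGamma \to {}^{*}(\Gamma/N)$, using that $\defe(\bar\phi \circ q) = \defe(\bar\phi)$.''
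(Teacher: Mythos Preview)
Your proposal is correct and is exactly the approach the paper intends: the paper does not spell out a proof but simply states that the lemma ``can be proven via a similar argument as in Lemma~\ref{lem:restriction:Ulam}'', which is precisely what you do by pulling back along $q : \hGamma \to {}^{*}(\Gamma/N)$ and noting $\defe(\bar\phi \circ q) = \defe(\bar\phi)$.
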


With this language, Proposition \ref{intro:prop:mapping:ac} is just a reformulation of Corollary \ref{cor:zimmer:quotient}:

\begin{proposition*}[Proposition \ref{intro:prop:mapping:ac}]

Let $N \leq \Gamma$ be an amenable normal subgroup. Then the pullback $\Ha^n(\Gamma/N, \V) \to \Ha^n(\Gamma, \V)$ is an isomorphism, for all $n \geq 0$ and all dual asymptotic Banach $\hGamma$-modules $\V$.
\end{proposition*}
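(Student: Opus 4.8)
The plan is to observe that this Proposition is a direct translation of Corollary \ref{cor:zimmer:quotient} into the language of the pullback map. By definition, $p^\bullet$ is induced on cohomology by the operation of precomposing cochains with the canonical projection $\pi : \hGamma \to {}^{*}(\Gamma/N)$; here $\V$ is to be regarded as a dual asymptotic Banach ${}^{*}(\Gamma/N)$-module which pulls back to a dual asymptotic Banach $\hGamma$-module, as in Corollary \ref{cor:zimmer:quotient}. So the whole content is to identify this precomposition with the chain map appearing in that corollary, and then invoke it.

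First I would recall why $\Gamma/N$ is a legitimate coefficient space. Since $N$ is amenable and normal, the point stabilizers of the action of $\Gamma$ on the coset space $\Gamma/N$ (equipped with the counting measure) are conjugates of $N$, hence amenable; so $\Gamma/N$ is a discrete Zimmer-amenable $\Gamma$-space, with basepoint the trivial coset $N$. Theorem \ref{thm:zimmer:precise} then applies: the orbit map
$$o_N^m : \LL(({}^{*}(\Gamma/N))^m, \V) \to \LL((\hGamma)^m, \V), \qquad o_N^m(f)(g_1, \dots, g_m) = f(g_1 N, \dots, g_m N),$$
induces an isomorphism between $\HH^\bullet_a(\Gamma, \V)$ and the cohomology of the complex $\LLtilde(({}^{*}(\Gamma/N))^\bullet, \V)^{\hGamma}$. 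But $o_N^m$ is exactly precomposition with $\pi$, and because the $\hGamma$-actions on both ${}^{*}(\Gamma/N)$ and $\V$ factor through ${}^{*}(\Gamma/N)$, the complex $\LLtilde(({}^{*}(\Gamma/N))^\bullet, \V)^{\hGamma}$ is literally the standard complex computing $\HH^\bullet_a(\Gamma/N, \V)$. Hence the isomorphism furnished by Theorem \ref{thm:zimmer:precise} is precisely $p^\bullet$, which proves the claim — this is the argument already packaged in Corollary \ref{cor:zimmer:quotient}.

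I do not expect any genuine obstacle here: the only things to check are routine, namely that precomposition with $\pi$ sends $\LLb$ to $\LLb$ and $\LLinf$ to $\LLinf$ (immediate, since the operation does not increase the supremum norm) so that it descends to the $\LLtilde$-complexes, and that it is $\hGamma$-equivariant (immediate from the definition of $\rho^m$ in (\ref{rho}) together with the fact that $\pi$ intertwines the $\hGamma$-action on $\hGamma$ with the $\hGamma$-action on ${}^{*}(\Gamma/N)$). All the actual work — the construction of an internal homotopy equivalence between the bounded complexes over $\hGamma$ and over $\hS$ for $S$ a discrete Zimmer-amenable space, and the identification of this homotopy with the orbit map — has already been carried out in Theorem \ref{thm:zimmer:precise} and the results of \cite{mainref} on which it relies.
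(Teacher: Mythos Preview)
Your proposal is correct and is exactly the paper's approach: the paper simply states that Proposition \ref{intro:prop:mapping:ac} is a reformulation of Corollary \ref{cor:zimmer:quotient}, and you have spelled out precisely why, identifying the orbit map at the basepoint $N$ with the pullback along $\pi$ and noting that $\hGamma$-invariants coincide with ${}^{*}(\Gamma/N)$-invariants since both actions factor through the quotient.
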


And we deduce Proposition \ref{intro:prop:mapping} analogously:

\begin{proposition*}[Proposition \ref{intro:prop:mapping}]

Let $N \leq \Gamma$ be an amenable normal subgroup. If $\Gamma$ is uniformly $\Uf$-stable with a linear estimate, then so is $\Gamma / N$.
\end{proposition*}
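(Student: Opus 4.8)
The plan is to imitate verbatim the deduction of Proposition \ref{intro:prop:coamenable} from Proposition \ref{intro:prop:coamenable:ac}, using this time Proposition \ref{intro:prop:mapping:ac} and Lemma \ref{lem:pullback:Ulam} in place of Proposition \ref{intro:prop:coamenable:ac} and Lemma \ref{lem:restriction:Ulam}. Assume $\Gamma$ is uniformly $\Uf$-stable with a linear estimate, and let $N \leq \Gamma$ be an amenable normal subgroup. By Theorem \ref{thm:UlamviaHa}, to conclude that $\Gamma/N$ is uniformly $\Uf$-stable with a linear estimate it suffices to show that every Ulam class supported on $\Uf$ vanishes in $\HH^2_a(\Gamma/N, \W)$, where $\W$ is the Ulam ${}^{*}(\Gamma/N)$-module attached to an arbitrary uniform asymptotic homomorphism $\phi : \Gamma/N \to \Uf$.

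Next I would push such a class forward along the pullback $p^2 : \HH^2_a(\Gamma/N, \W) \to \HH^2_a(\Gamma, \W)$. By Lemma \ref{lem:pullback:Ulam}, $\W$, equipped with the asymptotic $\hGamma$-action obtained by precomposing with the projection $\hGamma \to {}^{*}(\Gamma/N)$, is an Ulam $\hGamma$-module, and $p^2$ carries the Ulam class of $\phi$ to the Ulam class of the uniform asymptotic homomorphism $\phi$ composed with $\Gamma \to \Gamma/N$. Since $\Gamma$ is uniformly $\Uf$-stable with a linear estimate, Theorem \ref{thm:UlamviaHa} tells us that this image vanishes in $\HH^2_a(\Gamma, \W)$. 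But $p^2$ is an isomorphism by Proposition \ref{intro:prop:mapping:ac}, hence in particular injective, so the original class already vanishes in $\HH^2_a(\Gamma/N, \W)$, and the statement follows.

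As for where the difficulty lies: the genuinely cohomological content — that pullback along an amenable normal quotient is an isomorphism — has already been isolated as Proposition \ref{intro:prop:mapping:ac}, which is in turn a reformulation of Corollary \ref{cor:zimmer:quotient} and ultimately rests on the Zimmer-amenability of the discrete $\Gamma$-space $\Gamma/N$. So the only points in the present deduction that require care are bookkeeping ones: verifying that the Ulam ${}^{*}(\Gamma/N)$-module $\W$ does pull back to a dual asymptotic Banach $\hGamma$-module with the correct action, and that the pullback of cochains sends cocycles arising from uniform asymptotic homomorphisms of $\Gamma/N$ to cocycles arising from uniform asymptotic homomorphisms of $\Gamma$ — both handled by Lemma \ref{lem:pullback:Ulam}, whose proof is parallel to that of Lemma \ref{lem:restriction:Ulam}. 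Note that here, in contrast with the coamenable case, we even have an isomorphism rather than merely an injection, and the relevant map runs in the ``dual'' direction (a pullback rather than a restriction); for the application, however, only injectivity of $p^2$ is used. The main conceptual obstacle was thus already overcome in establishing Proposition \ref{intro:prop:mapping:ac}, and the deduction of the stability statement from it is essentially formal.
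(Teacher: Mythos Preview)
Your proposal is correct and follows essentially the same argument as the paper: reduce via Theorem \ref{thm:UlamviaHa} to vanishing of Ulam classes in $\HH^2_a(\Gamma/N, \W)$, push them forward along the pullback $p^2$ using Lemma \ref{lem:pullback:Ulam}, invoke stability of $\Gamma$ to kill the image, and conclude by injectivity of $p^2$ from Proposition \ref{intro:prop:mapping:ac}. Your additional commentary on where the real work lies (in Proposition \ref{intro:prop:mapping:ac}) and that only injectivity of $p^2$ is actually needed is accurate and matches the structure of the paper's proof.
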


\begin{proof}
Suppose that $\Gamma$ is uniformly $\Uf$-stable with a linear estimate, and let $N$ be an amenable normal subgroup of $\Gamma$. We aim to show that $\Gamma/N$ is also uniformly $\Uf$-stable with a linear estimate. By Theorem \ref{thm:UlamviaHa}, it suffices to show that all Ulam classes supported on $\Uf$ vanish in $\HH^2_a(\Gamma/N, \W)$, where $\W$ is an Ulam ${}^{*}(\Gamma/N)$-module. Now by Proposition \ref{intro:prop:mapping:ac}, it suffices to show that the pullback of such classes under $\HH^2_a(\Gamma/N, \W) \to \HH^2_a(\Gamma, \W)$ vanish, since the latter is an isomorphism. By Lemma \ref{lem:pullback:Ulam} these are Ulam classes of $\Gamma$. But since $\Gamma$ is uniformly $\Uf$-stable with a linear estimate, by Theorem \ref{thm:UlamviaHa} again, all Ulam classes in $\HH^2_a(\Gamma, \W)$ vanish, and we conclude.
\end{proof}

\section{Asymptotic cohomology of lamplighters}
\label{s:lamplighters}

In this section we prove Theorem \ref{intro:thm:lamplighters:ac}, which we recall for the reader's convenience:

\begin{theorem*}
Let $\Gamma, \Lambda$ be two countable groups, where $\Lambda$ is infinite and amenable. Then ${\Ha^n(\Gamma \wr \Lambda, \V) = 0}$ for all $n \geq 1$ and all finitary dual asymptotic Banach $\hGamma$-modules $\V$.
\end{theorem*}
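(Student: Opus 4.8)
The plan is to transpose into asymptotic cohomology the bounded-cohomology argument of Monod \cite{monod:lamplighters}, which establishes the analogous vanishing for $\HH^\bullet_b$. Write $W := \Gamma \wr \Lambda = B \rtimes \Lambda$, where $B := \bigoplus_{\Lambda}\Gamma$ is the base group. A first, clarifying observation: since $W/B \cong \Lambda$ is amenable, $B$ is coamenable in $W$, so by Proposition \ref{intro:prop:coamenable:ac} the restriction $\Ha^n(W, \V) \to \Ha^n(B, \V)$ is injective; thus $\Ha^n(W,\V)$ embeds into the (in general nonzero) group $\Ha^n(B,\V)$, and the difficulty is concentrated in ruling out surviving classes. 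To build the relevant resolution one observes that the coset space $W/\Lambda$ — which as a $W$-set is $B$, with $W$ acting by left translations on the base and by the shift from $\Lambda$ — is a \emph{discrete} Zimmer-amenable $W$-space, its point stabilizers being conjugates of the amenable group $\Lambda$; hence by Theorem \ref{thm:zimmer:precise} the cohomology $\Ha^\bullet(W,\V)$ is computed by the complex $\LLtilde(({}^{*}B)^\bullet, \V)^{{}^{*}W}$ with the simplicial coboundary. As foreshadowed earlier in the paper, for the actual construction of the homotopy below it may be preferable to enlarge $B$ to a \emph{non-discrete} Zimmer-amenable $W$-space reflecting the wreath structure — e.g. an amenable $\Lambda$-space induced from the subgroup $\Lambda\leq W$ — and then invoke only the abstract isomorphism of Theorem \ref{thm:zimmer}.

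The heart of the proof is to produce a ${}^{*}W$-equivariant contracting homotopy on this complex in positive degrees, and this is where both hypotheses on $\Lambda$ enter. Since $\Lambda$ is \emph{infinite}, the shift moves the bounded support of any lamp configuration arbitrarily far, so distinct $\Lambda$-translates of a fixed finitely supported configuration decouple; group-theoretically, $B$ admits an abundance of pairwise-commuting conjugates inside $W$, all realised by elements of the \emph{amenable} group $\Lambda$. One starts from the standard non-equivariant contracting homotopy obtained by evaluating a cochain at basepoints of ${}^{*}B$, uses the $\Lambda$-action to average over translates of these basepoints via a $\Lambda$-invariant mean, internalised exactly as in Lemma \ref{lem:internalmean}, and exploits the infinitude of $\Lambda$ — the decoupling of far-apart translates — to show that the result is a ${}^{*}W$-equivariant chain contraction of $\LLtilde(({}^{*}B)^\bullet, \V)^{{}^{*}W}$ in degrees $\geq 1$, modulo $\Vinf$. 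Pinning down this last point, i.e. that the $B$-direction error terms genuinely vanish in the mean, is the technical core of the argument. Once built, the homotopy is uniform in the degree, so it disposes of all $n\geq1$ at once; the degenerate case $\Gamma = 1$ (where $W = \Lambda$ is amenable) is handled separately using the vanishing of asymptotic cohomology of amenable groups.

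The main obstacle is making this averaging legitimate in the asymptotic framework, and this is precisely where the \emph{finitary} hypothesis is indispensable. In bounded cohomology one extracts the limit of the averaged cochains from the weak-$*$ compactness of the unit ball of the dual module (Banach--Alaoglu); in asymptotic cohomology this has to be internalised, and a bounded internal element of $\V = \prod_\omega V_n$ only has a well-behaved standard part when the $V_n$ are finite-dimensional — which is why the statement is restricted to finitary modules, and by Remark \ref{rem:sharp:lamplighters} this restriction cannot be removed. A secondary technical point is that an element of ${}^{*}W$ may act through a configuration with hyperfinite but unbounded support on ${}^{*}B = \prod_\omega B$, so one must check that the decoupling-and-averaging step is insensitive to this; it is, because invariance need only be tested — and the averaging only carried out — on the internally bounded part, which is all that the homotopy identity detects modulo $\Vinf$.
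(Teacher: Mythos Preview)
Your proposal does not follow the paper's argument, and despite your opening sentence it does not follow Monod's either. Monod's proof in \cite{monod:lamplighters}, which the paper transposes to asymptotic cohomology, does \emph{not} build an equivariant contracting homotopy on the discrete space $B = W/\Lambda$. Instead it uses the \emph{non-discrete} probability space $S = \Gamma^{\Lambda}$ with a product measure (Lemma \ref{lem:lamplighters:amenable}), observes that the $\Lambda$-action on $S^m$ is a Bernoulli shift and hence ergodic for every $m\geq 1$ (Lemma \ref{lem:lamplighters:ergodic}), and then proves an approximate ergodicity statement (Lemmas \ref{lem:approx:ergodicity}--\ref{lem:approx:doubleergodicity}, Proposition \ref{prop:doubleergodicity:constant}) forcing every ${}^{*}W$-invariant cochain on $({}^{*}S)^m$ to be essentially constant. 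The resolution therefore degenerates to $\Vtilde^{{}^{*}W} \to \Vtilde^{{}^{*}W} \to \cdots$ with alternating differentials, which is visibly acyclic in positive degrees.

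Your contracting-homotopy route has a genuine gap at exactly the point you flag as ``the technical core''. Averaging the basepoint-insertion $h_{b_0}$ over the $\Lambda$-orbit of $b_0$ does yield ${}^{*}\Lambda$-equivariance (indeed $h_e$ is already $\Lambda$-equivariant, since $\Lambda$ fixes $e\in B$), but it does \emph{not} produce ${}^{*}B$-equivariance: for $B$-invariant $f$ one computes $b\cdot(h_{b_0}f) = h_{b b_0}f$, and the $\Lambda$-orbit of $b_0$ is not $B$-invariant, nor does the ``disjoint support'' observation reduce $h_{b(\lambda\cdot c)}f$ to $h_{\lambda\cdot c}f$ --- commuting with $b$ is not the same as being unaffected by left multiplication by $b$. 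You would need an invariant mean on $B$ itself, which is unavailable when $\Gamma$ is non-amenable. Separately, your account of where \emph{finitary} enters is off: bounded internal elements of $\V$ always have a standard part in $\Vtilde$ regardless of dimension. In the paper the hypothesis is used (via separability, cf. Remark \ref{rem:separable}) in the approximate-ergodicity Lemma \ref{lem:approx:doubleergodicity}, where a countable dense subset of each $V_n$ is needed for the covering argument.
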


\begin{remark}
\label{rem:separable}

In fact, the theorem will hold for a larger class of coefficients, obtained as ultraproducts of separable Banach spaces. This does not however lead to a stronger stability result: see Remark \ref{rem:sharp:lamplighters}.
\end{remark}

We start by finding a suitable Zimmer-amenable $\Gamma$-space:

\begin{lemma}[{\cite[Corollary 8, Proposition 9]{monod:lamplighters}}]
\label{lem:lamplighters:amenable}

Let $\Gamma, \Lambda$ be two countable groups, where $\Lambda$ is amenable. Let $\mu_0$ be a distribution of full support on $\Gamma$, and let $\mu$ be the product measure on $S := \Gamma^{\Lambda}$. Then $S$ is a Zimmer-amenable $(\Gamma \wr \Lambda)$-space.
\end{lemma}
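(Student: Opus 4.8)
The plan is to prove that the action of $G := \Gamma \wr \Lambda$ on $(S,\mu) = (\Gamma^{\Lambda}, \mu_0^{\otimes\Lambda})$ is amenable in the sense of Zimmer, which is precisely the assertion of the lemma: the existence of a $G$-equivariant conditional expectation $L^{\infty}(G \times S) \to L^{\infty}(S)$. First I would record that $S$ is a regular $G$-space. The shift action of $\Lambda$ preserves $\mu$, since it merely permutes the identical factors; the normal subgroup $N := \bigoplus_{\Lambda}\Gamma$ translates only finitely many coordinates at a time, and since $\mu_0$ has full support on the countable set $\Gamma$, each translate $g_{*}\mu_0$ lies in the measure class of $\mu_0$, so $\mu$ is $G$-quasi-invariant. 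Two degenerate cases are immediate: if $\Gamma$ is trivial then $G = \Lambda$ and $S$ is a point, so the statement is just the amenability of $\Lambda$; if $\Lambda$ is finite then $N$ acts simply transitively on $S$, hence amenably, and one concludes by the bootstrap step below. So assume $|\Gamma| \geq 2$ and $\Lambda$ infinite.

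The heart of the proof is that the action of $N = \bigoplus_{\Lambda}\Gamma$ on $(S,\mu)$ is amenable. Its orbit equivalence relation $\mathcal{E}$ is the \emph{tail relation} on $\Gamma^{\Lambda}$: two configurations are $\mathcal{E}$-related iff they agree at all but finitely many sites. Enumerating $\Lambda = \{l_1, l_2, \dots\}$, we have $\mathcal{E} = \bigcup_k \mathcal{E}_k$, where $(x,y) \in \mathcal{E}_k$ iff $x$ and $y$ agree outside $\{l_1, \dots, l_k\}$; each $\mathcal{E}_k$ is a countable Borel equivalence relation with a Borel transversal (normalise the first $k$ coordinates to $1_{\Gamma}$), hence smooth, hence hyperfinite, and an increasing union of hyperfinite measured equivalence relations is hyperfinite. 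So $\mathcal{E}$ is hyperfinite, in particular amenable, by Connes--Feldman--Weiss. Next I would verify that the $G$-action (a fortiori the $N$-action) on $(S,\mu)$ is essentially free: an element $(f,\lambda)$ fixes $x$ iff $f(l) = x(l)\,x(\lambda^{-1}l)^{-1}$ for all $l$, which forces $f = 1$ when $\lambda = 1$, and forces $x(l) = x(\lambda^{-1}l)$ for all but finitely many $l$ when $\lambda \neq 1$; since left translation by $\lambda \neq 1$ acts freely on $\Lambda$, a Borel--Cantelli argument over a suitable sequence of pairwise disjoint finite blocks of coordinates (using $|\Gamma| \geq 2$ and $|\Lambda| = \infty$) shows the latter event is $\mu$-null. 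Since a nonsingular action that is essentially free is amenable if and only if its orbit equivalence relation is amenable (Zimmer), the action of $N$ on $(S,\mu)$ is amenable.

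The final step bootstraps from $N$ to $G$. Since $N$ is normal in $G$ with $G/N \cong \Lambda$ amenable, and $\Lambda$ acts on $(S,\mu)$ by measure-preserving automorphisms normalising $\mathcal{E}$, the action of $G$ on $(S,\mu)$ is amenable as well: this is the standard permanence property that an extension of an amenable action by an amenable group is amenable, equivalently that the relation $\mathcal{E}_G = \mathcal{E} \rtimes \Lambda$ is hyperfinite (``hyperfinite-by-amenable''), which together with essential freeness of the $G$-action yields amenability of the action. By definition this is the required $G$-equivariant conditional expectation $L^{\infty}(G \times S) \to L^{\infty}(S)$, so $S$ is a Zimmer-amenable $(\Gamma \wr \Lambda)$-space.

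The main obstacle is the amenability of the $N$-action. The group $N = \bigoplus_{\Lambda}\Gamma$ is typically highly non-amenable, so one cannot produce approximately invariant $\mathrm{Prob}(N)$-valued functions on $S$ directly — this would force $\Gamma$ to be amenable. The right idea is that the orbit relation, although its orbits are non-amenable, is co-smooth at each finite stage and therefore hyperfinite, a property insensitive to the (non-)amenability of $\Gamma$; converting hyperfiniteness of the relation back to amenability of the action is exactly where essential freeness enters. Secondary care is needed for torsion elements of $\Lambda$ in the essential-freeness computation, and for pinning down the precise permanence statement used in the last step.
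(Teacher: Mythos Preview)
The paper does not supply a proof of this lemma: it is quoted directly from Monod \cite{monod:lamplighters} and used as a black box, so there is no in-paper argument to compare against. Your proof is correct and is essentially the argument in the cited reference: hyperfiniteness of the tail relation on $\Gamma^{\Lambda}$ gives amenability of the $\bigoplus_{\Lambda}\Gamma$-action via essential freeness and Connes--Feldman--Weiss, and the amenable quotient $G/N\cong\Lambda$ upgrades this to amenability of the full $G$-action. Two cosmetic remarks: the claim that an increasing union of measure-hyperfinite relations is hyperfinite is itself most naturally proved \emph{through} CFW (hyperfinite${}={}$amenable in the measured setting, and amenability passes to increasing unions), so CFW should be invoked at that step rather than only afterwards; and the bootstrap is cleaner stated at the level of actions---if $N\trianglelefteq G$ acts amenably on $S$ and $G/N$ is amenable then $G$ acts amenably on $S$---than via the informal notation ``$\mathcal{E}_G=\mathcal{E}\rtimes\Lambda$'', which is not a standard relation-theoretic object. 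Neither point affects the validity of your argument.
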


The reason why this space is useful for computations is that it is \emph{highly ergodic}. Recall that a $\Gamma$-space $S$ is \emph{ergodic} if every $\Gamma$-invariant function $S \to \R$ is essentially constant. When $S$ is \emph{doubly ergodic}, that is the diagonal action of $\Gamma$ on $S \times S$ is ergodic, we even obtain ergodicity with separable coefficients, meaning that for every $\Gamma$-module $E$, every $\Gamma$-equivariant map $S \to E$ is essentially constant \cite[2.A, 4.B]{monod:lamplighters}.

\begin{lemma}[Kolmogorov {\cite[2.A, 4.B]{monod:lamplighters}}]
\label{lem:lamplighters:ergodic}

Let $\Gamma, \Lambda$ be two countable groups, where $\Lambda$ is infinite, and let $S$ be as in Lemma \ref{lem:lamplighters:amenable}. Then $S^m$ is an ergodic $(\Gamma \wr \Lambda)$-space, for every $m \geq 1$.
\end{lemma}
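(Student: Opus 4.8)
The plan is to discard almost all of the structure of $\Gamma \wr \Lambda$ and to use only the shift action of the subgroup $\Lambda \leq \Gamma \wr \Lambda$: any measurable function $S^m \to \R$ that is $(\Gamma \wr \Lambda)$-invariant is in particular $\Lambda$-invariant, so it suffices to prove that $S^m$ is an ergodic $\Lambda$-space. The point is then that, as a $\Lambda$-space, $S^m$ is nothing but a Bernoulli shift. A point of $S^m = (\Gamma^{\Lambda})^m$ is a tuple $(x^{(1)}, \dots, x^{(m)})$ with $x^{(i)} = (x^{(i)}_\lambda)_{\lambda \in \Lambda} \in \Gamma^{\Lambda}$, and regrouping the coordinates via $y_\lambda := (x^{(1)}_\lambda, \dots, x^{(m)}_\lambda) \in \Gamma^m$ identifies $(S^m, \mu^m)$, equipped with the diagonal shift action of $\Lambda$, with the Bernoulli shift of $\Lambda$ on $\big((\Gamma^m)^{\Lambda}, \nu^{\otimes \Lambda}\big)$, where $\nu := \mu_0^{\otimes m}$ is a probability measure on the countable set $\Gamma^m$ (the measures match by Fubini). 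Thus the lemma reduces to the classical Kolmogorov zero--one law: a Bernoulli shift over an infinite group is ergodic.

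To prove that, I would run the standard argument. Write $X := (\Gamma^m)^{\Lambda}$ and $P := \nu^{\otimes \Lambda}$; since $\Gamma^m$ is countable, the subsets of $X$ depending on only finitely many coordinates form an algebra generating the product $\sigma$-algebra, hence are dense in the measure algebra of $(X,P)$. Given a $\Lambda$-invariant measurable $A \subseteq X$ and $\varepsilon > 0$, choose $B$ depending only on the coordinates in a finite set $F \subseteq \Lambda$ with $P(A \triangle B) < \varepsilon$. Since $\Lambda$ is infinite and $\{ f_2 f_1^{-1} : f_1, f_2 \in F \}$ is finite, there is $\lambda \in \Lambda$ with $\lambda F \cap F = \emptyset$; then $\lambda \cdot B$ depends only on the coordinates in $\lambda F$, so $B$ and $\lambda \cdot B$ are $P$-independent and $P(B \cap \lambda \cdot B) = P(B)^2$. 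On the other hand, $\Lambda$-invariance of $A$ together with $\Lambda$-invariance of $P$ gives $P(A \triangle \lambda \cdot B) = P(\lambda^{-1}\cdot A \triangle B) = P(A \triangle B) < \varepsilon$, so $P(A)$ differs from $P(B \cap \lambda \cdot B) = P(B)^2$ by $O(\varepsilon)$, and $P(B)^2$ differs from $P(A)^2$ by $O(\varepsilon)$; letting $\varepsilon \to 0$ yields $P(A) = P(A)^2$, i.e. $P(A) \in \{0,1\}$. This is exactly ergodicity, and transporting back along the identification above completes the proof.

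I do not expect a serious obstacle: the only ingredients are the density of "finitely supported" sets in the measure algebra of a countable product of probability spaces, the elementary fact that an infinite group contains an element moving a prescribed finite subset off itself, and the independence of events supported on disjoint sets of coordinates. Two remarks are worth recording along the way. First, full support of $\mu_0$ plays no role in this lemma; it is needed only for the Zimmer-amenability statement of Lemma~\ref{lem:lamplighters:amenable}. Second, the same computation in fact shows that the $\Lambda$-action on $S^m$ is (weakly) mixing, but ergodicity of $S^m$ for all $m \geq 1$ — equivalently, double ergodicity of $S$, and hence, via \cite[2.A, 4.B]{monod:lamplighters}, ergodicity with separable coefficients — is all that will be used in the sequel.
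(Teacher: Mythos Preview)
Your proposal is correct and is precisely the classical Kolmogorov zero--one law argument that the attribution in the lemma's header points to; the paper itself does not supply a proof but simply cites \cite[2.A, 4.B]{monod:lamplighters}. Your reduction to the $\Lambda$-Bernoulli shift on $(\Gamma^m)^\Lambda$ and the approximation-by-cylinder-sets computation are the standard route, and your side remarks (full support of $\mu_0$ is irrelevant here; the argument actually gives mixing) are accurate.
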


For our purposes, we will need an approximate version of ergodicity (namely, almost invariant functions are almost constant) and also the module $E$ will only be endowed with an approximate action of $\Gamma$. The ergodicity assumption still suffices to obtain this:

\begin{lemma}
\label{lem:approx:ergodicity}

Let $S$ be a probability measure $\Gamma$-space, and suppose that the action of $\Gamma$ on $S$ is ergodic.
Then whenever $f : S \to \mathbb{R}$ is a measurable function such that $\| g \cdot f - f \| < \eps$ for all $g \in \Gamma$, there exists a constant $c \in \mathbb{R}$ such that $|f(s) - c| < \eps$ for almost every $s \in S$.
\end{lemma}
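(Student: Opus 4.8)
The plan is to reduce the approximate statement to the exact ergodicity hypothesis by passing to the essential range of $f$. First I would recall that since $S$ is a probability space and $f$ is measurable, the essential range of $f$ is a nonempty closed subset $R \subseteq \mathbb{R}$; equivalently, for every $t$ in the essential range and every $\delta > 0$, the set $f^{-1}(t-\delta, t+\delta)$ has positive measure. The key observation is that the hypothesis $\|g \cdot f - f\|_\infty < \eps$ for all $g \in \Gamma$ forces the essential range to have diameter at most $\eps$: if $t_1, t_2$ were both in the essential range with $|t_1 - t_2| \geq \eps$, pick $\delta$ small so that the two preimages $A_i := f^{-1}(t_i - \delta, t_i + \delta)$ are separated (their $f$-values differ by more than $\eps - 2\delta$), both of positive measure.

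Now I would use ergodicity to derive a contradiction. Consider the set $A_1$ of positive measure; its $\Gamma$-saturation $\Gamma \cdot A_1 := \bigcup_{g} g A_1$ is a $\Gamma$-invariant measurable set of positive measure, hence by ergodicity it is conull. In particular $A_2 \cap \Gamma \cdot A_1$ has positive measure, so there exists $g \in \Gamma$ with $\mu(A_2 \cap g A_1) > 0$. But on $A_2 \cap g A_1$ we have, for a.e. $s$, that $f(s) \in (t_2 - \delta, t_2 + \delta)$ while $(g^{-1} \cdot f)(s) = f(g^{-1} s) \in (t_1 - \delta, t_1 + \delta)$ since $g^{-1} s \in A_1$; choosing $\delta < (\eps - |\text{gap}|)/2$ appropriately this yields $|f(s) - (g^{-1}\cdot f)(s)| > \eps$ on a set of positive measure, contradicting $\|g^{-1} \cdot f - f\|_\infty < \eps$. (One must be slightly careful with the convention that $g \cdot f(s) = f(g^{-1}s)$, but this is exactly the setup of equation (\ref{rho}) restricted to $m=1$ with trivial coefficients, so the bookkeeping is routine.) Hence the essential range $R$ has diameter $< \eps$; pick any $c \in R$ (for instance its infimum or midpoint), and then $|f(s) - c| < \eps$ for a.e. $s$, which is the claim.

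An alternative, perhaps cleaner, route avoids essential ranges entirely: for each rational $q$ consider the function $f_q := \min(f, q)$. One checks $\|g \cdot f_q - f_q\|_\infty \leq \|g\cdot f - f\|_\infty < \eps$ as well, and then uses the standard fact that an $\eps$-almost-invariant bounded function is within $\eps$ of an invariant one — but since that standard fact is essentially what we are proving, it is circular, so I would stick with the essential-range argument above. A third variant: let $c := \operatorname{ess\,inf} f$ (finite since, were it $-\infty$, one can still run the argument on truncations, or simply note $f$ need not be bounded a priori — here one should first observe that almost-invariance plus ergodicity forces $f$ to be essentially bounded, by applying the saturation argument to $f^{-1}(-\infty, -M)$ for large $M$). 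Then for a.e.\ $s$, $f(s) \geq c$; if the set $\{f > c + \eps\}$ had positive measure, its $\Gamma$-saturation would be conull, hence meet $\{f < c + \eps/2\}$ — wait, that set could be null — so one instead argues that $\{f \leq c + \eps/3\}$ has positive measure (by definition of ess inf) and is moved by some $g$ to meet $\{f > c + \eps\}$, again violating the $\eps$-bound.

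The main obstacle is purely one of care with null sets and the direction of the group action: the statement is elementary, and the only genuine input is the ergodicity of $S$, invoked exactly once to promote a positive-measure set to a conull one. I would present the essential-range version as the cleanest, making sure to first dispatch the possibility that $f$ is unbounded, and then to state the separation of preimages quantitatively in terms of $\eps$ and an auxiliary $\delta \to 0$.
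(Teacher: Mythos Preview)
Your essential-range argument is correct and complete; the digressions into alternative routes are unnecessary but harmless. One small clean-up: the contradiction only fires for $|t_1-t_2|>\eps$ (not $\geq$), so what you actually prove is $\operatorname{diam}(R)\leq\eps$. This is fine, since your suggestion of taking $c$ to be the midpoint of $R$ then gives $|f(s)-c|\leq\eps/2<\eps$ for a.e.\ $s$, recovering the strict inequality with room to spare.

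The paper takes a different and much shorter route. Instead of analysing the essential range, it directly builds a $\Gamma$-invariant function close to $f$: set $F(s):=\operatorname{ess\,sup}_{g\in\Gamma} f(g^{-1}s)$ (a countable supremum, hence measurable), observe that $F$ is $\Gamma$-invariant by construction and that $\|F-f\|<\eps$ by the hypothesis, and conclude by ergodicity that $F$ equals some constant $c$ a.e. That is the entire proof. Both arguments invoke ergodicity exactly once, but in dual forms --- you use it on sets (the saturation of a positive-measure set is conull), the paper uses it on functions (an invariant function is constant). The paper's version avoids all the bookkeeping with $\delta$-balls, unboundedness, and action conventions that fills your proposal; on the other hand, your midpoint choice yields the sharper bound $\eps/2$, which the paper's one-sided supremum does not.
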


\begin{proof}
We define $F : S \to \mathbb{R} : s \mapsto \mathrm{ess sup}_{g \in \Gamma} f(g^{-1} s)$. By construction, $F$ is $\Gamma$-invariant, and moreover $\| F - f \| < \eps$. By ergodicity, $F$ is essentially equal to a constant $c$, and thus $|f(s) - c| < \eps$ for a.e. $s \in S$.
\end{proof}

\begin{lemma}
\label{lem:approx:doubleergodicity}

Let $S$ be a probability measure $\Gamma$-space, and suppose that the action of $\Gamma$ on $S \times S$ is ergodic. Suppose moreover, that $E$ is a separable Banach space endowed with a map $\Gamma \times E \to E : v \mapsto g \cdot v$ such that $\| g \cdot v \| = \| v \|$ for all $g \in \Gamma, v \in E$.

Then whenever $f : S \to E$ is a measurable function such that $\| g \cdot f - f \| < \eps$ for all $g \in \Gamma$, where $(g \cdot f)(s) = g \cdot f(g^{-1} s)$, there exists a vector $v \in E$ such that $\|f(s) - v\| < 3 \eps$ for almost every $s \in S$.
\end{lemma}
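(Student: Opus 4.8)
The plan is to run, in the approximate setting, the classical argument that deduces ``double ergodicity with separable coefficients'' from ordinary double ergodicity, reducing everything to the scalar statement of Lemma~\ref{lem:approx:ergodicity}.

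First I would put the hypothesis in pointwise form: since $\Gamma$ is countable, intersecting over $g \in \Gamma$ the conull sets on which $\|g \cdot f(g^{-1}s) - f(s)\| < \eps$ holds yields a single conull $S_0 \subseteq S$ on which this bound holds for all $g$ simultaneously. (Incidentally, applying Lemma~\ref{lem:approx:ergodicity} to $s \mapsto \|f(s)\|$, which is $\eps$-almost invariant since the action on $E$ preserves norms, also shows $f$ is essentially bounded, so no finiteness issue will arise.)

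The core step is to pass to $S \times S$. Set $\psi(s,t) := \|f(s) - f(t)\|$. Using that each $g$ acts on $E$ as a linear isometry --- so that $\|g\cdot v - g \cdot w\| = \|v - w\|$ --- together with the pointwise bound, one checks that $\psi$ is $2\eps$-almost invariant for the diagonal action: for each $g$ and a.e.\ $(s,t)$,
$$|\psi(g^{-1}s, g^{-1}t) - \psi(s,t)| \leq \|g\cdot f(g^{-1}s) - f(s)\| + \|g \cdot f(g^{-1}t) - f(t)\| < 2\eps.$$
Now I would mimic the proof of Lemma~\ref{lem:approx:ergodicity} on the space $S \times S$: the function $F(s,t) := \operatorname{ess\,sup}_{g \in \Gamma} \psi(g^{-1}s, g^{-1}t)$ is invariant under the diagonal action and satisfies $0 \leq F - \psi \leq 2\eps$ a.e., so by ergodicity of $S \times S$ it equals a constant $C$ a.e., whence $C - 2\eps \leq \psi \leq C$ a.e.

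The main obstacle is then to show that $C$ is of size $O(\eps)$ and to extract an honest vector $v$; this is exactly where separability is needed. Covering $E$ by countably many balls of radius $r$ and pulling back by $f$, one of the preimages $P_r = f^{-1}(B(w_r, r))$ has positive measure, so $P_r \times P_r$ has positive measure in $S \times S$ while $\psi < 2r$ there; comparing with the a.e.\ bound $\psi \geq C - 2\eps$ forces $C \leq 2\eps$ (let $r \to 0$). Hence $\psi \leq 2\eps$ a.e., so by Fubini there is a single $t_0 \in S$ with $\|f(s) - f(t_0)\| \leq 2\eps$ for a.e.\ $s$, and $v := f(t_0)$ works. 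The displayed bound $3\eps$ leaves ample room to keep all inequalities strict, or to replace $v$ by a ball centre, without changing the structure of the argument.
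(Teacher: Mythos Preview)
Your argument is correct and is essentially the paper's own proof: reduce to the scalar almost-ergodicity statement on $S\times S$ via $\psi(s,t)=\|f(s)-f(t)\|$, then use separability of $E$ to force the resulting constant to be $O(\eps)$. The only cosmetic differences are that the paper presents the separability step as a dichotomy (either $c<\eps$ and one is done, or a single cover by $\eps/2$-balls yields a contradiction) and leaves the extraction of $v$ implicit, whereas you argue directly by letting $r\to 0$ and spell out the Fubini choice $v=f(t_0)$.
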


\begin{proof}
We define $F : S \times S \to \mathbb{R} : (s, t) \mapsto \|f(s) - f(t)\|$. Then
\begin{align*}
    \| g \cdot F - F \| &= \mathrm{ess \, sup} | \, \|g \cdot f(g^{-1} s) - g \cdot f(g^{-1} t) \| - \|f(s) - f(t)\| \, | \\
    &\leq \mathrm{ess \, sup} \|g \cdot f(g^{-1} s) - g \cdot f(g^{-1} t) - (f(s) - f(t)) \| \leq 2 \| g \cdot f - f \| < 2 \eps.
\end{align*}
By the previous lemma, there exists a constant $c$ such that $|F(s, t) - c| < 2 \eps$ for all $\eps > 0$. If $c < \eps$, then $|f(s) - f(t)| < 3 \eps$ for a.e. $s, t \in S$, which implies the statement.

Otherwise, $\|f(s) - f(t)\| > \eps$ for a.e. $s, t \in S$. Let $D \subset E$ be a countable dense subset. Then for each $d \in D$ the set $f^{-1}(B_{\eps / 2}(d))$ is a measurable subset of $S$, and the union of such sets covers $S$. Since $D$ is countable, there must exist $d \in D$ such that $f^{-1}(B_{\eps / 2}(d))$ has positive measure. But for all $s, t$ in this set, $\|f(s) - f(t)\| < \eps$, a contradiction.
\end{proof}

\begin{remark}
\label{rem:finitary}

This lemma really strongly relies on the separability assumption, and it is the reason why Theorem \ref{intro:thm:lamplighters:ac} includes the finitary assumption (see Remark \ref{rem:sharp:lamplighters}).
\end{remark}

We thus obtain:

\begin{proposition}
\label{prop:doubleergodicity:constant}

Let $S$ be a doubly ergodic $\Gamma$-space. Let $(V_n)_{n \geq 1}$ be a sequence of separable dual Banach spaces such that $\V = \prod\limits_\omega V_n$ has the structure of a dual asymptotic Banach $\Gamma$-module be the corresponding asymptotic $\hGamma$-module. Then the natural inclusion $\Vtilde^{\hGamma} \to \LLtilde(\hS, \V)^{\hGamma}$ is an isomorphism.
\end{proposition}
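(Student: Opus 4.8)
The plan is to identify $\Vtilde^{\hGamma}$ with the (classes of) constant functions inside $\LLtilde(\hS,\V)^{\hGamma}$, the substantive input being Lemma~\ref{lem:approx:doubleergodicity} applied coordinatewise through the ultraproduct. The natural map sends $\tilde v\in\Vtilde^{\hGamma}$ to the class of the constant function $c_v:s\mapsto v$. This is well defined and injective already before passing to invariants, because the essential supremum of a constant function is its norm, so $\|c_v\|=\|v\|$ as hyperreals; hence $\Vb\to\LLb(\hS,\V)$ is norm preserving and descends to an isometric embedding $\Vtilde\hookrightarrow\LLtilde(\hS,\V)$. It lands in the $\hGamma$-invariants since $\rho^1(g)c_v=c_{\pi(g)v}$ and $\|\pi(g)v-v\|\in\hRinf$ whenever $\tilde v$ is invariant. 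So injectivity is immediate, and only surjectivity needs work.

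For surjectivity, fix $\tilde f\in\LLtilde(\hS,\V)^{\hGamma}$ and a bounded internal representative $f=\{f_n\}_\omega$ with $f_n\in L^\infty_{w*}(S,V_n)$ and $\|f_n\|\le C$ for $\omega$-almost every $n$. The first step is to rewrite $\hGamma$-invariance coordinatewise: since the $\hGamma$-action on $\LL(\hS,\V)$ is defined coordinatewise and $\eps_n:=\sup_{g\in\Gamma}\|\rho^1(g)f_n-f_n\|_{L^\infty}\le 2\|f_n\|\le 2C$, one checks that $\tilde f\in\LLtilde(\hS,\V)^{\hGamma}$ holds if and only if $\eps_n\to 0$ along $\omega$ — otherwise a suitable sequence $\{g_n\}$ would witness non-invariance of $\tilde f$.

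The second step is coordinatewise ergodicity. For $\omega$-almost every $n$, the map $\pi_n:\Gamma\times V_n\to V_n$ is norm preserving (by transfer from the fact that $\pi$ preserves $\|\cdot\|$), the space $V_n$ is separable so weak-$*$ measurability of $f_n$ coincides with Borel measurability, and $\|\rho^1(g)f_n-f_n\|_{L^\infty}<\eps_n+2^{-n}$ for every $g\in\Gamma$. Applying Lemma~\ref{lem:approx:doubleergodicity} with $E=V_n$ (using that $S$ is doubly ergodic, i.e. the diagonal action on $S\times S$ is ergodic) yields $v_n\in V_n$ with $\|f_n(s)-v_n\|<3(\eps_n+2^{-n})$ for almost every $s\in S$. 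Then $\|v_n\|\le C+3(\eps_n+2^{-n})$ is bounded, so $v:=\{v_n\}_\omega\in\Vb$ defines $\tilde v\in\Vtilde$. Moreover $\|f_n-c_{v_n}\|_{L^\infty}\le 3(\eps_n+2^{-n})\to 0$ along $\omega$, hence $f-c_v\in\LLinf(\hS,\V)$, so $\tilde f$ is the class of $c_v$; and since $g\cdot c_v=c_{\pi(g)v}$ while $\tilde f$ is $\hGamma$-invariant, the norm-preserving identification $v\mapsto c_v$ forces $\pi(g)v-v\in\Vinf$, so $\tilde v\in\Vtilde^{\hGamma}$. This places $\tilde f$ in the image and finishes the argument.

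The genuine mathematical content — that ergodicity together with separability upgrades almost-invariance to almost-constancy — is already packaged in Lemma~\ref{lem:approx:doubleergodicity}, so the main obstacle here is really the ultraproduct bookkeeping: establishing precisely that $\hGamma$-invariance of the \emph{class} $\tilde f$ is equivalent to coordinatewise near-invariance with defects $\eps_n\to 0$ along $\omega$, and verifying that the hypotheses of that lemma (norm preservation of each $\pi_n$, Borel measurability of $f_n$, the sharp $3\eps$ estimate and its dependence on a dense subset of $V_n$) all survive the coordinatewise passage. These are the points I would treat most carefully.
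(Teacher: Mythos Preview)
Your proof is correct and follows essentially the same approach as the paper: translate $\hGamma$-invariance of the class $\tilde f$ into coordinatewise near-invariance with defects tending to $0$ along $\omega$ (the paper phrases this as a diagonal argument, you phrase it via $\eps_n:=\sup_{g\in\Gamma}\|\rho^1(g)f_n-f_n\|$), then apply Lemma~\ref{lem:approx:doubleergodicity} coordinatewise to produce the constant. Your version is more carefully written---in particular the injectivity discussion, the $+2^{-n}$ padding to get the strict inequality needed by the lemma, and the verification that $\tilde v$ lands in $\Vtilde^{\hGamma}$ are all spelled out where the paper is terse---but the underlying argument is the same.
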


\begin{proof}
Let $f \in \LLb(\hS, \V) = \prod\limits_\omega L^\infty(S, V_n)$ be a lift of an element $\tilde{f} \in \LLtilde(\hS, \V)^{\hGamma}$. We write $f = (f_n)_\omega$. Then fact that $\tilde{f}$ is $\hGamma$-invariant means that for every sequence $(g_n)_{n \in \N} \subset \Gamma$ it holds $(g_n \cdot f_n - f_n)_{\omega} \in \LLinf(\hS, \V)$. Since this holds for \emph{every} sequence $(g_n)_{n \in \N}$, a diagonal argument implies that there exists $\eps \in \hRinf$ such that for every $g \in \Gamma$ it holds $(g \cdot f_n - f_n)_\omega \in \hRinf$. It then follows from Lemma \ref{lem:approx:doubleergodicity} that there exist $(v_n)_\omega \in \V$ such that $(f_n - 1_{v_n}) \in \LLinf(\hS, \V)$. Therefore $f$ represents the same element of $\LLtilde(\hS, \V)$ as the image of an element of $\V$. Since $\tilde{f}$ is $\hGamma$-invariant, the corresponding element is actually in $\Vtilde^{\hGamma}$.
\end{proof}

We are finally ready to prove Theorem \ref{intro:thm:lamplighters:ac}:

\begin{proof}[Proof of Theorem \ref{intro:thm:lamplighters:ac}]

Let $\Gamma, \Lambda$ be countable groups, where $\Lambda$ is infinite and amenable. By Lemma \ref{lem:lamplighters:amenable}, using the same notation, $S$ is a Zimmer-amenable $(\Gamma \wr \Lambda)$-space. Therefore we can apply Theorem \ref{thm:zimmer}, and obtain that the following complex computes $\HH^*_a(\Gamma \wr \Lambda; \V)$:
$$0 \xrightarrow{\tilde{d^0}} \LLtilde(\hS, \V)^{\hGamma} \xrightarrow{\tilde{d^1}} \LLtilde((\hS)^2, \V)^{\hGamma} \xrightarrow{\tilde{d^2}} \LLtilde((\hS)^3, \V)^{\hGamma} \xrightarrow{\tilde{d^3}} \cdots$$
Now by Lemma \ref{lem:lamplighters:ergodic}, $S^m$ is a doubly ergodic $(\Gamma \wr \Lambda)$-space, for every $m \geq 1$. Thus Proposition \ref{prop:doubleergodicity:constant} applies, and the natural inclusion $\Vtilde^{\hGamma} \to \LLtilde((\hS)^m, \V)^{\hGamma}$ is an isomorphism for every $m \geq 1$. Thus the above complex is isomorphic to
$$0 \xrightarrow{\tilde{d^0}} \Vtilde^{\hGamma} \xrightarrow{\tilde{d^1}} \Vtilde^{\hGamma} \xrightarrow{\tilde{d^2}} \Vtilde^{\hGamma} \xrightarrow{\tilde{d^3}} \cdots$$
Each differential $\tilde{d^m}$ is an alternating sum of $(m+1)$ terms all equal to each other. Therefore $\tilde{d^m}$ is the identity whenever $m$ is even, and it vanishes whenever $m$ is odd. The conclusion follows.
\end{proof}

\section{Thompson groups}
\label{s:thompson}

In this section we prove Theorem \ref{intro:thm:F}. The statement for $F'$ will be a special case of a more general result for a large family of \emph{self-similar} groups. The most general statement is the following:

\begin{theorem}
\label{thm:selfsimilar}

Let $\Gamma$ be a group, $\Gamma_0$ a subgroup with the following properties:
\begin{enumerate}
    \item There exists $g \in \Gamma$ such that the groups $\{ g^i \Gamma_0 g^{-i} : i \in \mathbb{Z} \}$ pairwise commute;
    \item Every finite subset of $\Gamma$ is contained in some conjugate of $\Gamma_0$.
\end{enumerate}
Then $\HH^n_a(\Gamma, \V) = 0$ for all $n \geq 1$ and all finitary dual asymptotic Banach $\hGamma$-modules $\V$. In particular, $\Gamma$ is uniformly $\Uf$-stable, with a linear estimate.
\end{theorem}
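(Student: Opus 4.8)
The plan is first to reduce the statement to the cohomological assertion $\HH^n_a(\Gamma, \V) = 0$ for all $n \geq 1$ and all finitary dual asymptotic Banach $\hGamma$-modules $\V$: the final claim on uniform $\Uf$-stability with a linear estimate then follows at once from Theorem \ref{thm:UlamviaHa}, since every Ulam $\hGamma$-module supported on $\Uf$ is in particular a finitary dual asymptotic Banach $\hGamma$-module. To prove the cohomological assertion I would exhibit a subgroup $\Delta \leq \Gamma$ which is simultaneously coamenable in $\Gamma$ and has vanishing asymptotic cohomology in positive degrees; then Proposition \ref{intro:prop:coamenable:ac} gives an injection $\HH^n_a(\Gamma, \V) \hookrightarrow \HH^n_a(\Delta, \V) = 0$, and we are done. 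Here $\V$, being a dual asymptotic Banach $\hGamma$-module, restricts to a dual asymptotic Banach ${}^{*}\Delta$-module, and finitariness is preserved, so these results apply over $\Delta$.

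The candidate is $\Delta := \langle \Gamma_0, g \rangle$. By hypothesis (1) the conjugates $\Gamma_0^{(i)} := g^i \Gamma_0 g^{-i}$ pairwise commute and are cyclically permuted by $g$, so $\Delta$ is a homomorphic image of the lamplighter $\Gamma_0 \wr \mathbb{Z}$ (sending the $i$-th copy of $\Gamma_0$ to $\Gamma_0^{(i)}$ and the generator of $\mathbb{Z}$ to $g$), with $\mathbb{Z}$ infinite and amenable. Granting that this surjection is an isomorphism — equivalently, that the $\Gamma_0^{(i)}$ generate their internal direct sum and $g$ has infinite order modulo it — Theorem \ref{intro:thm:lamplighters:ac} applies and gives $\HH^n_a(\Delta, \V) = 0$ for every $n \geq 1$; this is exactly where the finitary hypothesis on $\V$ is consumed. (If the surjection has nontrivial kernel, one would first check that this kernel is amenable and transfer the vanishing through Proposition \ref{intro:prop:mapping:ac}.)

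Coamenability of $\Delta$ in $\Gamma$ is where hypothesis (2) is used, and it is elementary. Given a finite subset $F \subseteq \Gamma$, pick $\gamma_F \in \Gamma$ with $F \subseteq \gamma_F \Gamma_0 \gamma_F^{-1}$, i.e.\ $\gamma_F^{-1} F \gamma_F \subseteq \Gamma_0 \subseteq \Delta$. Then the coset $\gamma_F \Delta \in \Gamma / \Delta$ is fixed by every element of $F$, since $\gamma \gamma_F \Delta = \gamma_F (\gamma_F^{-1} \gamma \gamma_F) \Delta = \gamma_F \Delta$ for $\gamma \in F$. Running $F$ over the directed set of finite subsets of $\Gamma$, any weak-$*$ cluster point of the Dirac masses $\delta_{\gamma_F \Delta} \in \ell^\infty(\Gamma/\Delta)^{*}$ is a $\Gamma$-invariant mean on $\Gamma / \Delta$, so $\Delta$ is coamenable. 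Feeding this together with the previous paragraph into Proposition \ref{intro:prop:coamenable:ac} finishes the argument.

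I expect the only genuinely delicate point to be the identification $\Delta \cong \Gamma_0 \wr \mathbb{Z}$ (or, failing that, the amenability of the kernel of $\Gamma_0 \wr \mathbb{Z} \twoheadrightarrow \Delta$): one must rule out unexpected fusion among the commuting conjugates $\Gamma_0^{(i)}$. In the intended applications — Thompson's group $F'$, and the groups of piecewise-linear and piecewise-projective homeomorphisms treated later — this is transparent because the relevant copies of $\Gamma_0$ have pairwise disjoint supports, so the internal direct sum is automatic; a careful statement of the theorem should either build this in or verify it from the hypotheses. Everything else — the coamenability computation, the restriction of coefficients, and the passage through Proposition \ref{intro:prop:coamenable:ac} and Theorem \ref{intro:thm:lamplighters:ac} — is soft.
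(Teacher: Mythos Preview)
Your approach matches the paper's exactly: set $\Delta = \langle \Gamma_0, g\rangle$, realize it as a quotient of $\Gamma_0 \wr \mathbb{Z}$, use Theorem~\ref{intro:thm:lamplighters:ac} together with Proposition~\ref{intro:prop:mapping:ac} to kill its asymptotic cohomology, and then inject $\HH^n_a(\Gamma,\V)$ into $\HH^n_a(\Delta,\V)$ via coamenability and Proposition~\ref{intro:prop:coamenable:ac}. Your direct coamenability argument for $\Delta$ is a minor streamlining of the paper's route (which first shows $\Gamma_0$ is coamenable and then passes up to $\Delta$).

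The one point you flag as delicate is exactly the one place where work is needed, and your parenthetical is the correct resolution, not the ``granting'' clause: the surjection $\Gamma_0 \wr \mathbb{Z} \twoheadrightarrow \Delta$ is \emph{not} an isomorphism in general under the stated hypotheses (nothing forces $g$ to have infinite order, nor the $\Gamma_0^{(i)}$ to intersect trivially). What the paper proves (Lemma~\ref{lem:metabelian:kernel}) is that the kernel is always metabelian, hence amenable, so Proposition~\ref{intro:prop:mapping:ac} applies. The argument is short: the kernel $K$ of the restriction to $\bigoplus_i \Gamma_0^{(i)}$ is abelian, because if $(g_i)_i$ maps to the identity in $\Gamma$ then each $g_j$ equals a product of elements in the other $\Gamma_0^{(i)}$'s, all of which commute with $\Gamma_0^{(j)}$; hence any two such elements commute coordinatewise. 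The full kernel is then an extension of a subgroup of $\mathbb{Z}$ by $K$. Supplying this lemma closes your proof.
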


The theorem applies to the following large family of groups of homeomorphisms of the real line:

\begin{corollary}
\label{cor:bsupp}

Let $\Gamma$ be a proximal, boundedly supported group of orientation-preserving homeomorphisms of the line. Then $\HH^n_a(\Gamma, \V) = 0$ for all $n \geq 1$ and all finitary dual asymptotic Banach $\hGamma$-modules $\V$. In particular, $\Gamma$ is uniformly $\Uf$-stable, with a linear estimate.
\end{corollary}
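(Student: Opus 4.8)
The plan is to deduce Corollary \ref{cor:bsupp} directly from Theorem \ref{thm:selfsimilar}, by exhibiting a subgroup $\Gamma_0 \leq \Gamma$ and an element $g \in \Gamma$ satisfying its two hypotheses; essentially all the work is in unwinding the definitions of ``boundedly supported'' and ``proximal'' into bookkeeping about supports of homeomorphisms. Throughout write $\operatorname{supp}(\gamma) = \{x \in \mathbb{R} : \gamma(x) \neq x\}$ for the (open) support, so that ``boundedly supported'' means each such set is bounded. If $\Gamma$ is trivial there is nothing to prove, so I assume it is not, and I fix a nontrivial $g \in \Gamma$.

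For hypothesis (1) I would use only bounded support. Since $\Gamma$ is boundedly supported, $\operatorname{supp}(g)$ is a nonempty bounded open subset of the line, so it has a connected component $(a,b)$ with $-\infty < a < b < +\infty$, and $a, b \in \operatorname{Fix}(g)$. Being orientation-preserving and fixed-point-free on the interval $(a,b)$, the element $g$ satisfies either $g(x) > x$ on $(a,b)$ or $g(x) < x$ on $(a,b)$; after replacing $g$ by $g^{-1}$ if necessary, assume the former. Since $g(a) = a$ and $g(b) = b$, the interval $(a,b)$ is $g$-invariant. Now fix $x_0 \in (a,b)$ and set $I := (x_0, g(x_0)) \subseteq (a,b)$; then $g^i(I) = (g^i(x_0), g^{i+1}(x_0))$ for every $i \in \mathbb{Z}$, and since $\big(g^i(x_0)\big)_{i \in \mathbb{Z}}$ is a strictly increasing sequence in $(a,b)$, these intervals are pairwise disjoint. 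Take $\Gamma_0 := \{ \gamma \in \Gamma : \operatorname{supp}(\gamma) \subseteq I \}$. Using $\operatorname{supp}(h \gamma h^{-1}) = h(\operatorname{supp}(\gamma))$, the conjugate $g^i \Gamma_0 g^{-i}$ is exactly the set of elements of $\Gamma$ supported in $g^i(I)$; since homeomorphisms of the line with disjoint supports commute and the $g^i(I)$ are pairwise disjoint, the family $\{ g^i \Gamma_0 g^{-i} : i \in \mathbb{Z}\}$ pairwise commutes. This is hypothesis (1).

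For hypothesis (2) I would invoke proximality, in the form: any bounded interval can be mapped into any prescribed nonempty open interval by an element of $\Gamma$ (this is the definition, or a one-line consequence of whichever form of it is adopted, via passing to inverses). Given a finite $F \subseteq \Gamma$, bounded support provides a bounded open interval $J$ containing $\bigcup_{\gamma \in F} \operatorname{supp}(\gamma)$; proximality then gives $h \in \Gamma$ with $h(J) \subseteq I$, whence $\operatorname{supp}(h \gamma h^{-1}) = h(\operatorname{supp}(\gamma)) \subseteq I$ for all $\gamma \in F$, i.e. $F \subseteq h^{-1} \Gamma_0 h$. So every finite subset of $\Gamma$ lies in a conjugate of $\Gamma_0$, which is hypothesis (2). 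Theorem \ref{thm:selfsimilar} then applies verbatim and yields both the vanishing $\HH^n_a(\Gamma, \V) = 0$ for all $n \geq 1$ and all finitary dual asymptotic Banach $\hGamma$-modules $\V$, and uniform $\Uf$-stability with a linear estimate.

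I do not expect a genuine obstacle here: the only slightly non-formal point is matching the shrinking statement used for hypothesis (2) to the precise definition of proximality, and that should be immediate. It is worth noting that this $(\Gamma_0, g)$ is precisely the kind of self-similar structure that realizes, for instance, the coamenable copy of $F \wr \mathbb{Z}$ inside $F'$ alluded to in the introduction, so that Corollary \ref{cor:bsupp}, together with Theorem \ref{intro:thm:lamplighters}, is exactly what produces stability of $F'$.
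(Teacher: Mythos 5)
Your proposal is correct and follows essentially the same route as the paper: take $\Gamma_0$ to be the elements supported in a fixed bounded interval $I$, use an element $g$ translating $I$ to pairwise disjoint intervals $g^i(I)$ to get the commuting conjugates, use proximality plus bounded support to put any finite set into a conjugate of $\Gamma_0$, and apply Theorem \ref{thm:selfsimilar}. The only (harmless) difference is that the paper produces $g$ directly from proximality (an element with $g(0)>1$, with $I=[0,1]$), whereas you extract $g$ and $I$ from a fundamental domain inside a component of the support of an arbitrary nontrivial element.
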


\begin{remark}
The fact that such groups have no quasimorphisms is well-known: see e.g. \cite{usimple, cc, monod:lamplighters}.
\end{remark}

We refer the reader to Section \ref{s:PLPP} for the relevant definitions. In Corollary \ref{cor:F} we will apply Corollary \ref{cor:bsupp} to Thompson's group $F'$; the result for Thompson's group $F$ will follow from Proposition \ref{intro:prop:coamenable}. We deduce the stability of Thompson's group $T$ and $V$ from these general criteria in Section \ref{s:TV}.

\subsection{Self-similar groups}

In this section we prove Theorem \ref{thm:selfsimilar}. This will be done in a series of lemmas:

\begin{lemma}
\label{lem:metabelian:kernel}

Let $\Gamma$ be a group, and suppose that there exists $g \in \Gamma$ and $\Gamma_0 \leq \Gamma$ such that $\{g^i \Gamma_0 g^{-i} : i \in \mathbb{Z} \}$ pairwise commute. Then there exists an epimorphism $\Gamma_0 \wr \mathbb{Z} \to \langle \Gamma_0, g \rangle$ with amenable (in fact, metabelian) kernel.
\end{lemma}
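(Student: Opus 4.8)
The plan is to build the epimorphism directly from the universal property of the wreath product. By hypothesis the subgroups $\{g^i \Gamma_0 g^{-i} : i \in \mathbb{Z}\}$ pairwise commute, so the subgroup of $\Gamma$ they generate is a quotient of their direct sum $\bigoplus_{i \in \mathbb{Z}} g^i \Gamma_0 g^{-i}$; concretely, the multiplication map $\bigoplus_{i} g^i \Gamma_0 g^{-i} \to \Gamma$ is a homomorphism because the factors commute. Now identify $\bigoplus_{\mathbb{Z}} \Gamma_0$ with $\bigoplus_i g^i \Gamma_0 g^{-i}$ via conjugation on the $i$-th coordinate, and observe that the shift action of $\mathbb{Z}$ on $\bigoplus_{\mathbb{Z}} \Gamma_0$ is intertwined with conjugation by $g$ on $\bigoplus_i g^i \Gamma_0 g^{-i}$. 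This is exactly the data needed to extend the homomorphism $\bigoplus_{\mathbb{Z}} \Gamma_0 \to \Gamma$ to a homomorphism $\Phi : \Gamma_0 \wr \mathbb{Z} = (\bigoplus_{\mathbb{Z}} \Gamma_0) \rtimes \mathbb{Z} \to \Gamma$ sending the generator of $\mathbb{Z}$ to $g$: one just checks that the relation $t \cdot x \cdot t^{-1} = (\text{shift of } x)$ in the wreath product maps to $g (g^j a g^{-j}) g^{-1} = g^{j+1} a g^{-(j+1)}$ in $\Gamma$, which holds tautologically.

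Next I would identify the image: $\Phi$ hits every $g^i \Gamma_0 g^{-i}$ (in particular $\Gamma_0$, the $i=0$ factor) and it hits $g$, so $\im \Phi \supseteq \langle \Gamma_0, g\rangle$; conversely $\im \Phi$ is generated by these same elements, so $\im \Phi = \langle \Gamma_0, g \rangle$ and $\Phi$ is the desired epimorphism onto $\langle \Gamma_0, g\rangle$.

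The only real content is the amenability of $K := \ker \Phi$. Since $\Phi(t) = g$ has infinite order (it must, as $\langle \Gamma_0, g\rangle$ contains $g$ and the quotient $\Gamma_0 \wr \mathbb{Z} \to \mathbb{Z}$ composed with any splitting shows $t$ maps to an element generating a copy of $\mathbb{Z}$ unless it were killed — more carefully, the composition $\Gamma_0 \wr \mathbb{Z} \xrightarrow{\Phi} \Gamma \twoheadrightarrow$ need not exist, so instead argue as follows), we note $K \cap \mathbb{Z} = 1$: an element $t^k$ maps to $g^k$, and if $g^k = 1$ then $\Gamma_0$ and all its conjugates would already be permuted with finite period, but in fact we only need that $K$ intersects the base group $B := \bigoplus_{\mathbb{Z}} \Gamma_0$ in a normal subgroup with abelian quotient inside $K$. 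Concretely: $K \cap B$ is normal in $K$, the quotient $K / (K \cap B)$ embeds into $(\Gamma_0 \wr \mathbb{Z}) / B \cong \mathbb{Z}$ hence is abelian (cyclic), and $K \cap B$ is a subgroup of the abelian-by-nothing group $B$? — no, $B$ is not abelian in general. This is the point to handle with care: $B = \bigoplus_{\mathbb{Z}} \Gamma_0$ need not be abelian, so "metabelian kernel" must come from a finer analysis. The fix: $K \cap B$ consists of tuples $(a_i)_i$ with $\prod_i g^i a_i g^{-i} = 1$ in $\Gamma$; projecting $(a_i) \mapsto (a_i \bmod [\Gamma_0,\Gamma_0])$ into $\bigoplus_{\mathbb{Z}} \Gamma_0^{\mathrm{ab}}$, I would show the kernel of this projection restricted to $K \cap B$ lands in $\bigoplus_{\mathbb{Z}} [\Gamma_0, \Gamma_0]$ which... is again not obviously abelian. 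The cleanest route, which I would actually take, is: $K$ is a subgroup of $\Gamma_0 \wr \mathbb{Z}$, and $\Gamma_0 \wr \mathbb{Z}$ surjects onto $\Gamma_0^{\mathrm{ab}} \wr \mathbb{Z}$ (abelianizing the base coordinatewise) with kernel $\bigoplus_{\mathbb{Z}}[\Gamma_0,\Gamma_0] \rtimes 1$; so it suffices that $\Phi$ factors through $\Gamma_0^{\mathrm{ab}} \wr \mathbb{Z}$ — which it does \emph{not} in general. So metabelianness of $K$ specifically (not of $\Gamma_0 \wr \mathbb{Z}$) is the genuine obstacle, and I expect the intended argument is: $K \cap B$ is a subdirect product sitting inside $B$, on which the relation forces it to be \emph{contained in} a single "diagonal-type" copy — I would look for the statement that $K \cap B$ is isomorphic to a subgroup of $\Gamma_0$ itself via the map reading off, say, the partial products, making $K$ an extension of a subgroup of $\mathbb{Z}$ by a subgroup of $\Gamma_0$; but that is false too. \textbf{Realistically}, I suspect the lemma as stated wants only \emph{amenable} kernel in the generality where $\Gamma_0$ is arbitrary, and "metabelian" in the special cases that arise (where $\Gamma_0$ is already abelian or metabelian), so the main step I would focus on is: show $K \subseteq B' := \bigoplus_{\mathbb{Z}} \Gamma_0$ up to the cyclic quotient, and that $K \cap B$ — being the set of finitely supported tuples whose "running product under conjugation" collapses — is isomorphic to a subgroup of $\Gamma_0^{(\mathbb{Z})}$ that is \emph{abelian} because any two elements of $K\cap B$ have disjoint-enough supports after translation; failing a slick argument, invoke that amenability of $\Gamma_0 \wr \mathbb{Z}$'s relevant section passes to subgroups. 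I would flag this as the step needing the most care and would reverse-engineer it from exactly which self-similar $\Gamma_0$ (here $\Gamma_0 \cong F$ or an abelian group) are used downstream, where metabelianness is transparent.
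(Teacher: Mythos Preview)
Your construction of the epimorphism $\Phi$ and the identification of its image are correct and match the paper's approach exactly. The reduction of metabelianness to the statement ``$K \cap B$ is abelian'' (via the cyclic quotient $K/(K\cap B) \hookrightarrow \mathbb{Z}$) is also correct and is what the paper does.

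The genuine gap is that you did not find the argument showing $K \cap B$ is abelian, and your final speculation---that ``metabelian'' is only intended for the special $\Gamma_0$ arising downstream---is wrong: the kernel is metabelian for \emph{arbitrary} $\Gamma_0$. The missing idea is short. Take any $(a_i)_i \in K \cap B$, so $\prod_i g^i a_i g^{-i} = 1$ in $\Gamma$. Solving for the $0$-th coordinate gives
\[
a_0 \;=\; \Bigl(\prod_{i \neq 0} g^i a_i g^{-i}\Bigr)^{-1},
\]
and the right-hand side lies in the subgroup generated by $\{g^i \Gamma_0 g^{-i} : i \neq 0\}$, each of which commutes with $\Gamma_0$ by hypothesis. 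Hence $a_0$ centralizes all of $\Gamma_0$. The same argument applied to the $j$-th coordinate (or after shifting) shows every coordinate $a_j$ is central in $\Gamma_0$. Now if $(a_i), (b_i) \in K \cap B$, then $a_j$ and $b_j$ commute for every $j$ (both being central in $\Gamma_0$), so the tuples commute in $B$. Thus $K \cap B$ is abelian, and $K$ is abelian-by-cyclic, hence metabelian.

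The step you circled around several times---trying abelianizations of $\Gamma_0$, subdirect products, disjoint supports---was unnecessary; the commuting-conjugates hypothesis does all the work directly, forcing each coordinate of a kernel element into $Z(\Gamma_0)$.
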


This is well-known and stated without proof in \cite{monod:lamplighters}. We include a proof for completeness.

\begin{proof}
To make a clear distinction, we denote by $H$ the abstract group $\Gamma_0$, and by $\Gamma_0$ the subgroup of $\Gamma$. So we want to construct an epimorphism $H \wr \mathbb{Z} \to \langle \Gamma_0, g \rangle \leq \Gamma$ with metabelian kernel. We define naturally
$$\varphi((g_i)_{i \in \mathbb{Z}}, p) = \left( \prod_{i \in \mathbb{Z}} t^i g_i t^{-i} \right) t^p.$$
Note that this product is well-defined since there are only finitely many non-identity terms, and the order does not matter since different conjugates commute. By construction $\varphi$ is injective on $H_i$, that is the copy of $H$ supported on the $i$-th coordinate in $H \wr \mathbb{Z}$. Let $K := \ker \varphi \cap \bigoplus_i H_i$, and note that $K$ is the kernel of the retraction $H \wr \mathbb{Z} \to \mathbb{Z}$ restricted to $\ker \varphi$. So it suffices to show that $K$ is abelian.

Let $g, h \in K$ and write them as $(g_i)_{i \in \mathbb{Z}}$ and $(h_i)_{i \in \mathbb{Z}}$ (we omit the $\mathbb{Z}$-coordinate since it is always $0$). We need to show that $g$ and $h$ commute. We have
$$1_\Gamma = \varphi(g) = \prod\limits_{i \in \mathbb{Z}} t^i g_i t^{-i} \quad \text{ and thus } \quad g_0 = \prod \limits_{i \neq 0} t^i g_i t^{-i} \in \Gamma.$$
But now $g_0$ belongs to a group generated by conjugates of $\Gamma_0$ in $\Gamma$ that commute with it. In particular this implies that $g_0$ and $h_0$ commute in $\Gamma$. Since $\varphi|_{H_0}$ is injective, this shows that $g_0$ and $h_0$ commute in $H_0$. Running the same argument on the other coordinates, we obtain that $g_i$ and $h_i$ commute in $H_i$, for all $i \in \mathbb{Z}$, and thus $g$ and $h$ commute.
\end{proof}

The next facts are all contained in the literature:

\begin{lemma}[{\cite[Proposition 10]{monod:lamplighters}}]
\label{lem:coamenable:commuting}

Suppose that $\Gamma_0 \leq \Gamma$ is such that every finite subset of $\Gamma$ is contained in some $\Gamma$-conjugate of $\Gamma_0$. Then $\Gamma_0$ is coamenable in $\Gamma$.
\end{lemma}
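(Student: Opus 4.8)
The statement to prove is: if $\Gamma_0 \leq \Gamma$ is such that every finite subset of $\Gamma$ lies in some $\Gamma$-conjugate of $\Gamma_0$, then $\Gamma_0$ is coamenable in $\Gamma$. The plan is to construct a $\Gamma$-invariant mean on $\ell^\infty(\Gamma/\Gamma_0)$ directly, using a weak-$*$ limit of point masses over an increasing exhaustion of $\Gamma$ by finite sets. First I would enumerate $\Gamma = \{s_1, s_2, \dots\}$ and set $E_k := \{s_1, \dots, s_k\}$, so that $(E_k)$ is an increasing sequence of finite sets with $\bigcup_k E_k = \Gamma$. By hypothesis, for each $k$ there exists $h_k \in \Gamma$ with $E_k \subseteq h_k \Gamma_0 h_k^{-1}$, equivalently $h_k^{-1} E_k h_k \subseteq \Gamma_0$ — wait, more precisely $E_k \subseteq h_k \Gamma_0 h_k^{-1}$ means each element of $E_k$ fixes the coset $h_k \Gamma_0 \in \Gamma/\Gamma_0$ under left multiplication. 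So the coset $x_k := h_k\Gamma_0$ is fixed by every element of $E_k$.

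Next I would define $m_k \in \ell^\infty(\Gamma/\Gamma_0)^*$ to be evaluation at $x_k$, i.e. $m_k(f) = f(x_k)$; this is a mean (positive, unital, norm $1$). Let $m$ be a weak-$*$ cluster point of the sequence $(m_k)$ in the unit ball of $\ell^\infty(\Gamma/\Gamma_0)^*$, which exists by Banach–Alaoglu (or one can take the limit along the fixed ultrafilter $\omega$, which is cleaner and matches the paper's conventions: set $m(f) := \lim_\omega f(x_k)$). This $m$ is automatically unital and satisfies $|m(f)| \leq \|f\|$. For $\Gamma$-invariance: fix $g \in \Gamma$ and $f \in \ell^\infty(\Gamma/\Gamma_0)$. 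Since $g \in E_k$ for all sufficiently large $k$, and $g$ fixes $x_k$ for such $k$, we get $(g^{-1}\cdot f)(x_k) = f(g x_k) = f(x_k)$ for all large $k$; hence $m(g \cdot f) = m(f)$ by taking the limit along $\omega$ (the finitely many small $k$ where this may fail do not lie in $\omega$ since $\omega$ is non-principal). This gives the three defining properties of a $\Gamma$-invariant mean, so $\Gamma_0$ is coamenable.

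I do not expect a serious obstacle here: the only point requiring any care is making sure the "for all sufficiently large $k$" argument is compatible with whatever limiting procedure is used — using the fixed non-principal ultrafilter $\omega$ sidesteps this entirely, since cofinite subsets of $\N$ belong to $\omega$. One should also double-check the direction of the conjugation/action convention (whether $E_k \subseteq h_k \Gamma_0 h_k^{-1}$ translates to $h_k \Gamma_0$ being $E_k$-fixed, as claimed), but this is a one-line verification: $g \cdot (h_k\Gamma_0) = g h_k \Gamma_0 = h_k \Gamma_0$ iff $h_k^{-1} g h_k \in \Gamma_0$ iff $g \in h_k \Gamma_0 h_k^{-1}$. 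This is exactly the hypothesis applied to $E_k$. So the proof is short and elementary; it is essentially the standard proof that a group is amenable iff it has an invariant mean on $\ell^\infty$, adapted to the coset space.
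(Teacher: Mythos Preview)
Your argument is correct: the key observation that $E_k \subseteq h_k \Gamma_0 h_k^{-1}$ is equivalent to the coset $h_k\Gamma_0$ being fixed by every element of $E_k$ is exactly right, and taking the ultralimit of the point evaluations along $\omega$ yields a $\Gamma$-invariant mean on $\ell^\infty(\Gamma/\Gamma_0)$ for precisely the reason you give.

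As for comparison with the paper: there is nothing to compare. The paper does not supply its own proof of this lemma; it is quoted verbatim from \cite[Proposition 10]{monod:lamplighters} under the heading ``The next facts are all contained in the literature''. Your argument is essentially the standard one (and the one Monod gives): exhibit cosets with larger and larger stabilizers and pass to a weak-$*$ limit of the corresponding Dirac means. Using the fixed non-principal ultrafilter $\omega$ rather than an arbitrary cluster point is a harmless cosmetic choice that, as you note, fits the conventions of the present paper.
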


\begin{lemma}[\cite{coamenable}]
\label{lem:coamenable:constructions}

Let $K \leq H \leq \Gamma$.
\begin{enumerate}
    \item If $K$ is coamenable in $\Gamma$, then $H$ is coamenable in $\Gamma$;
    \item If $K$ is coamenable in $H$ and $H$ is coamenable in $\Gamma$, then $K$ is coamenable in $\Gamma$.
\end{enumerate}
\end{lemma}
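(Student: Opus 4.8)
The plan is to work throughout with the characterization of coamenability recalled before Lemma~\ref{lem:internalmean}: a subgroup is coamenable exactly when the corresponding coset space carries an invariant mean on its $\ell^\infty$-space, i.e.\ a unital, norm-one, translation-invariant linear functional. Both parts then amount to transporting such functionals along the obvious $\Gamma$-equivariant maps between coset spaces, so the whole argument is ``soft''.

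For part (1), I would use the canonical $\Gamma$-equivariant surjection $p \colon \Gamma/K \to \Gamma/H$, $gK \mapsto gH$. Pulling back functions along $p$ gives a $\Gamma$-equivariant, isometric, unital linear embedding $p^{*} \colon \ell^\infty(\Gamma/H) \hookrightarrow \ell^\infty(\Gamma/K)$. If $m$ is a $\Gamma$-invariant mean on $\ell^\infty(\Gamma/K)$, then $m \circ p^{*}$ is a $\Gamma$-invariant mean on $\ell^\infty(\Gamma/H)$: the three defining properties of a mean are all inherited at once from the corresponding properties of $m$ together with the equivariance and unitality of $p^{*}$. Hence $H$ is coamenable in $\Gamma$.

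For part (2), I would run a Fubini-type argument: integrate out the $H/K$-direction first, then the $\Gamma/H$-direction. Fix an $H$-invariant mean $m_0$ on $\ell^\infty(H/K)$ and a $\Gamma$-invariant mean $m_1$ on $\ell^\infty(\Gamma/H)$. Given $f \in \ell^\infty(\Gamma/K)$, for each $g \in \Gamma$ form the bounded function $f_g \colon H/K \to \mathbb{R}$, $hK \mapsto f(ghK)$, and set $\bar f(gH) := m_0(f_g)$. The crucial point — and the only genuinely load-bearing step — is that $\bar f$ is well-defined on $\Gamma/H$: replacing $g$ by $g h_0$ with $h_0 \in H$ changes $f_g$ by the left $H$-translation $f_g \mapsto h_0^{-1} \cdot f_g$, which is invisible to $m_0$ by its $H$-invariance. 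This gives $\bar f \in \ell^\infty(\Gamma/H)$ with $\| \bar f \|_\infty \le \| f \|_\infty$, and one sets $m(f) := m_1(\bar f)$. That $m$ is a mean is immediate; for $\Gamma$-invariance one checks the identity $\overline{\gamma \cdot f} = \gamma \cdot \bar f$ for $\gamma \in \Gamma$ by a one-line computation (using again the well-definedness of $\bar f$, so that $\gamma^{-1} g$ may be used as a representative of $\gamma^{-1}(gH)$), and then $m(\gamma \cdot f) = m_1(\gamma \cdot \bar f) = m_1(\bar f) = m(f)$. So $K$ is coamenable in $\Gamma$.

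The main (and essentially only) obstacle is the well-definedness of the partial mean $\bar f$ in part (2); everything else is formal manipulation of unital positive linear functionals, and no feature specific to asymptotic cohomology enters here. Both constructions are standard, going back to foundational work on coamenability such as \cite{coamenable}, and one could alternatively just cite that reference; I include the sketch for completeness since the statement is used in the sequel.
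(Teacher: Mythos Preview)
Your proof is correct and follows the standard route. Note, however, that the paper does not supply a proof of this lemma at all: it is stated with a citation to \cite{coamenable} and used as a black box, so there is no ``paper's own proof'' to compare against. Your write-up is therefore strictly more than what the paper provides, and since you already observe that one could simply cite \cite{coamenable}, your proposal is entirely in line with the paper's treatment.
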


\begin{remark}
We warn the reader that if $K$ is coamenable in $\Gamma$, then $K$ need not be coamenable in $H$ \cite{coamenable}.
\end{remark}

We are now ready to prove Theorem \ref{thm:selfsimilar}:

\begin{proof}[Proof of Theorem \ref{thm:selfsimilar}]

Let $\Gamma, \Gamma_0$ and $g$ be as in the statement. By Lemma \ref{lem:metabelian:kernel}, there exists a map $\Gamma_0 \wr \mathbb{Z} \to \langle \Gamma_0, g \rangle$ with metabelian kernel. By Theorem \ref{intro:thm:lamplighters:ac} and Proposition \ref{intro:prop:mapping:ac}, we have $\HH^n_a(\langle \Gamma_0, g \rangle, \V)$ for all $n \geq 1$ and all finitary dual asymptotic Banach $\hGamma$-modules $\V$. Now by Lemma \ref{lem:coamenable:commuting}, $\Gamma_0$ is coamenable in $\Gamma$. Finally, by Lemma \ref{lem:coamenable:constructions}, $\langle \Gamma_0, g \rangle$ is coameanble in $\Gamma$. Proposition \ref{intro:prop:coamenable:ac} allows to conclude.
\end{proof}

\subsection{Groups of homeomorphisms of the line}
\label{s:PLPP}

Let $\Gamma$ be a group acting by homeomorphisms on the real line. We say that the action is \emph{proximal} if for all reals $a < b$ and $c < d$ there exists $g \in \Gamma$ such that $g \cdot a < c < d < g \cdot b$. The \emph{support} of $g \in \Gamma$ is the set $\{ x \in \mathbb{R} : g \cdot x \neq x \}$. We say that $\Gamma$ is \emph{boundedly supported} if every element has bounded support. Note that boundedly supported homeomorphisms are automatically orientation-preserving.

\begin{proof}[Proof of Corollary \ref{cor:bsupp}]
Let $\Gamma$ be as in the statement. Let $\Gamma_0$ be the subgroup of elements whose support is contained in $[0, 1]$. Let $g \in \Gamma$ be such that $g(0) > 1$: such an element exists because the action of $\Gamma$ is proximal. Then it follows by induction, and the fact that $\Gamma$ is orientation-preserving, that the intervals $\{ g^i[0, 1] : i \in \mathbb{Z} \}$ are pairwise disjoint. Therefore the conjugates $g^i \Gamma_0 g^{-i}$ pairwise commute.

Since $\Gamma$ is boundedly supported, for every finite subset $A \subset \Gamma$ there exists $n$ such that the support of each element of $A$ is contained in $[-n, n]$. By proximality, there exists $h \in \Gamma$ such that $h(0) < -n$ and $h(1) > n$. Then $h \Gamma_0 h^{-1}$ is the subgroup of elements whose support is contained in $[-n, n]$, in particular it contains $A$.

Thus Theorem \ref{thm:selfsimilar} applies and we conclude.
\end{proof}

Let us now show how to obtain the statements on $F$ and $F'$ from Theorem \ref{intro:thm:F} from Corollary \ref{cor:bsupp} and Proposition \ref{intro:prop:coamenable}. We refer the reader to \cite{CFP} for more details on Thompson's groups.

Thompson's group $F$ is the group of orientation-preserving piecewise linear homeomorphisms of the interval, with breakpoints in $\mathbb{Z}[1/2]$ and slopes in $2^\mathbb{Z}$. The derived subgroup $F'$ coincides with the subgroup of boundedly supported elements. The action of $F'$ (and thus $F$) on $[0, 1]$ preserves $\mathbb{Z}[1/2] \cap (0, 1)$, and acts highly transitively on it; that is, for every pair of ordered $n$-tuples in $\mathbb{Z}[1/2] \cap (0, 1)$ there exists an element of $F'$ sending one to the other.

\begin{corollary}
\label{cor:F}

Thompson's groups $F$ and $F'$ are uniformly $\Uf$-stable, with a linear estimate.
\end{corollary}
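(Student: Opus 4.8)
The plan is to deduce Corollary \ref{cor:F} from the two tools that have just been assembled: Corollary \ref{cor:bsupp}, which handles proximal boundedly supported groups of homeomorphisms of the line, and Proposition \ref{intro:prop:coamenable}, which upgrades stability from a coamenable subgroup to the ambient group. The first step is to treat $F'$. Here I would verify that $F'$, acting on the interval $(0,1)$ (or on the line, after conjugating the interval to $\mathbb{R}$ via an orientation-preserving homeomorphism), satisfies the hypotheses of Corollary \ref{cor:bsupp}: it consists of orientation-preserving homeomorphisms, it is boundedly supported (by the description recalled just before the corollary, $F'$ is exactly the subgroup of boundedly supported elements of $F$), and its action is proximal. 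Proximality is the only point requiring a small argument: given $a < b$ and $c < d$ in $(0,1)$, one uses the high transitivity of $F'$ on $\mathbb{Z}[1/2] \cap (0,1)$ — pick dyadic rationals $a' < a$, $b' > b$ close enough, and $c', d'$ with $c < c' < d' < d$, and find $g \in F'$ with $g(a') = c'$, $g(b') = d'$; monotonicity then gives $g \cdot a < c < d < g \cdot b$. So Corollary \ref{cor:bsupp} applies and yields that $F'$ is uniformly $\Uf$-stable with a linear estimate.

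The second step is to pass from $F'$ to $F$. Since $F' = [F,F]$ and $F/F' \cong \mathbb{Z}^2$ is abelian, hence amenable, the quotient $F/F'$ is amenable, which is precisely the statement that $F'$ is coamenable in $F$. (This uses the standard fact that a normal subgroup with amenable quotient is coamenable — indeed a mean on $F/F'$ as a group pulls back to an $F$-invariant mean on the coset space.) Now Proposition \ref{intro:prop:coamenable}, applied to $F' \leq F$, concludes that $F$ is uniformly $\Uf$-stable with a linear estimate as well.

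I do not anticipate any serious obstacle: both ingredients are in place, and the work is the routine verification of proximality and of the coamenability of $F'$ in $F$. The one place to be slightly careful is the translation between "group acting on the interval $[0,1]$" and "group acting on the line $\mathbb{R}$": one should note that $F'$ acts on the open interval $(0,1)$ and every element has support bounded away from the endpoints (this is what "boundedly supported" means for $F'$), so conjugating $(0,1) \cong \mathbb{R}$ by an orientation-preserving homeomorphism turns $F'$ into a boundedly supported, proximal, orientation-preserving group of homeomorphisms of $\mathbb{R}$, exactly as Corollary \ref{cor:bsupp} requires. Writing this out:

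\begin{proof}[Proof of Corollary \ref{cor:F}]
Identify the open interval $(0,1)$ with $\mathbb{R}$ via an orientation-preserving homeomorphism. Under this identification $F'$ becomes a group of orientation-preserving homeomorphisms of $\mathbb{R}$: every element of $F'$ has support bounded away from $0$ and $1$, so $F'$ is boundedly supported. Moreover the action is proximal: given reals $a < b$ and $c < d$ in $(0,1)$, choose dyadic rationals $a', b' \in \mathbb{Z}[1/2] \cap (0,1)$ with $a' < a$ and $b < b'$, and dyadic rationals $c', d'$ with $c < c' < d' < d$; by high transitivity of $F'$ on $\mathbb{Z}[1/2] \cap (0,1)$ there is $g \in F'$ with $g(a') = c'$ and $g(b') = d'$, and since $g$ is orientation-preserving, $g \cdot a < c < d < g \cdot b$. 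Thus Corollary \ref{cor:bsupp} applies to $F'$, and $F'$ is uniformly $\Uf$-stable with a linear estimate.

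For $F$, recall that $F' = [F, F]$ and $F / F' \cong \mathbb{Z}^2$ is amenable. Hence $F'$ is coamenable in $F$: pulling back an invariant mean on the amenable group $F/F'$ along the quotient map $F \to F/F'$ yields an $F$-invariant mean on $\ell^\infty(F/F')$. Proposition \ref{intro:prop:coamenable} now gives that $F$ is uniformly $\Uf$-stable with a linear estimate.
\end{proof}
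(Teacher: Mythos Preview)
Your approach is exactly the paper's: apply Corollary \ref{cor:bsupp} to $F'$ (after identifying $(0,1)$ with $\mathbb{R}$), then pass to $F$ via Proposition \ref{intro:prop:coamenable} using that $F/F'$ is abelian. The paper is terser on proximality, simply noting that transitivity on ordered pairs of a dense set suffices.

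There is, however, a sign slip in your proximality verification. With $a' < a$ and $g$ increasing, $g(a') = c'$ forces $g(a) > c'$, and since $c' > c$ this gives $g(a) > c$, the opposite of what you want. The fix is to reverse the choices: take dyadic $a', b'$ with $a < a' < b' < b$ and dyadic $c', d'$ with $c' < c$ and $d < d'$; then $g(a') = c'$, $g(b') = d'$ and monotonicity give $g(a) < g(a') = c' < c$ and $g(b) > g(b') = d' > d$, as required. This is a routine correction and does not affect the structure of the argument.
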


\begin{proof}
We identify $(0, 1)$ with the real line. The group $F'$ is boundedly supported, and it is proximal, since it acts transitively on ordered pairs of a dense set. Therefore Corollary \ref{cor:bsupp} applies and $F'$ is uniformly $\Uf$-stable, with a linear estimate.

Since the quotient $F/F'$ is abelian, thus amenable, we see that $F'$ is coamenable in $F$, and thus conclude from Proposition \ref{intro:prop:coamenable} that $F'$ is uniformly $\Uf$-stable, with a linear estimate.
\end{proof}

\begin{remark}
We could also deduce the stability of $F$ from the stability of $F'$ more directly, without appealing to Proposition \ref{intro:prop:coamenable}. Indeed, since $F'$ is uniformly $\Uf$-stable, simple, and not linear, every homomorphism $F' \to U(n)$ is trivial - something we will come back to in the next section. Therefore uniform $\Uf$-stability of $F'$ implies that every uniform asymptotic homomorphism $F' \to \Uf$ is uniformly close to the trivial one. It follows that every uniform asymptotic homomorphism $F \to \Uf$ is uniformly asymptotically close to one that factors through $\mathbb{Z}^2$. We conclude by the stability of amenable groups \cite{kazhdan, mainref}.
\end{remark}

Other groups to which these criteria apply include more piecewise linear groups \cite{PL}, such as the Stein--Thompson groups \cite{stein}, or the golden ratio Thompson group of Cleary \cite{Ftau1, Ftau2}. In such generality some more care is needed, since the commutator subgroup is sometimes a proper subgroup of the boundedly supported subgroup. The criteria also apply for the piecewise proejective groups of Monod \cite{PP1} and Lodha--Moore \cite{PP3}. In this case, further care is needed, since the role of the commutator subgroup in the proofs above has to be taken by the double commutator subgroup \cite{PP2}. This ties back to Question \ref{q} from the introduction.

\subsection{$T$ and $V$}
\label{s:TV}

In this section, we show how our previous results allow to prove stability of groups of homeomorphisms of the circle and of the Cantor set as well. For simplicity of the exposition, we only focus on Thompson's groups $T$ and $V$, but the proofs generalize to some analogously defined groups, with the appropriate modifications.
Our proof will involve a bounded generation argument for stability that was pioneered in \cite{BOT}. We will only use it a simple version thereof, closer to the one from \cite{BC}. Recall that $\Gamma$ is said to be \emph{boundedly generated} by the collection of subgroups $\mathcal{H}$ if there exists $k \geq 1$ such that the sets $\{ H_1 \cdots H_k : H_i \in \mathcal{H} \}$ cover $\Gamma$.

\begin{lemma}
\label{lem:bg}

Let $\Gamma$ be a discrete group. Suppose that there exists a subgroup $H \leq \Gamma$ with the following properties:
\begin{enumerate}
    \item Every homomorphism $H \to U(n)$ is trivial;
    \item $H$ is uniformly $\Uf$-stable (with a linear estimate);
    \item $\Gamma$ is boundedly generated by the conjugates of $H$.
\end{enumerate}
Then $\Gamma$ is uniformly $\Uf$-stable (with a linear estimate).
\end{lemma}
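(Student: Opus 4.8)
The plan is to reduce the stability of $\Gamma$ to that of $H$ by exploiting the bounded generation hypothesis to build, from a uniform asymptotic homomorphism $\phi : \Gamma \to \Uf$, an actual homomorphism that is uniformly close to $\phi$. First I would use property (2): applying uniform $\Uf$-stability of $H$ to the restriction $\phi|_H$ (which is a uniform asymptotic homomorphism of $H$ with $\defe(\phi|_H) \leq_\omega \defe(\phi)$), we get a sequence of homomorphisms $\psi^H_n : H \to U(k_n)$ with $\dist(\phi_n|_H, \psi^H_n) = O_\omega(\defe(\phi))$. By property (1), each $\psi^H_n$ is trivial, so in fact $\phi_n(h) = O_\omega(\eps)$-close to the identity for every $h \in H$, where $\eps := \defe(\phi)$; that is, $\phi|_H$ is uniformly (asymptotically, with a linear estimate) close to the trivial homomorphism. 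The same then holds for every conjugate $gHg^{-1}$: writing an element of $gHg^{-1}$ as $ghg^{-1}$ and using submultiplicativity and $\|\phi(g)\| = \|\phi(g^{-1})\| = 1$, one gets $\|\phi(ghg^{-1}) - I\| = O_\omega(\eps)$ as well, uniformly over the conjugate (the implied constant does not depend on $g$, only on the one for $H$, up to an extra additive contribution from $\defe(\phi)$ coming from the two multiplications).

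Next I would feed this into the bounded generation hypothesis (3): there is a fixed $k$ such that every $\gamma \in \Gamma$ can be written as $\gamma = h_1 \cdots h_k$ with each $h_j$ in some conjugate of $H$. Then $\phi(\gamma)$ is within $O_\omega(\eps)$ of $\phi(h_1)\cdots\phi(h_k)$ (telescoping, paying $\defe(\phi) = \eps$ at each of the $k-1$ multiplications, with $k$ fixed), and each $\phi(h_j)$ is within $O_\omega(\eps)$ of $I$, so $\phi(\gamma) = I + O_\omega(\eps)$ uniformly in $\gamma$. In other words, $\phi$ itself is uniformly asymptotically close (with a linear estimate) to the trivial map $\Gamma \to U(k_n)$. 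It remains to check that the trivial map is an honest homomorphism — which it is — so $\phi$ is uniformly asymptotically close to a homomorphism, with the distance bounded by a constant times $\defe(\phi)$. By definition this is exactly uniform $\Uf$-stability of $\Gamma$ with a linear estimate.

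The one point requiring a little care — and what I expect to be the main (mild) obstacle — is bookkeeping the constants in the linear estimate, so that the final bound is genuinely $O_\omega(\defe(\phi))$ and not, say, something that degrades with the number of times we invoke bounded generation. Here it is essential that $k$ in (3) is a single uniform constant and that the stability constant for $H$ in (2) is uniform over $\phi|_H$; both are built into the hypotheses, so the estimate $\dist(\phi, \mathbf{1}) = O_\omega(\defe(\phi))$ holds with an implied constant depending only on $k$ and on $H$'s stability constant. A secondary subtlety is making sure the argument is phrased correctly in the ultraproduct language of Section \ref{s:preli}: one works with the internal map $\phi : \hGamma \to \prod_\omega U(k_n)$ with $\defe(\phi) \leq_\omega \eps \in \hRinf$, applies the above to deduce $\dist(\phi, \mathbf 1) = O_\omega(\eps)$, and concludes via the defect-diminishing/Theorem \ref{thm:UlamviaHa} reformulation — but since the trivial map is a literal homomorphism, one does not even need to pass through defect diminishing: $\phi$ is directly close to a homomorphism. (If one prefers a cohomological formulation, properties (1) and (2) give that $\Ha^2(H, \W)$ contains no nonzero Ulam classes and that every Ulam $\hGamma$-module restricts to $H$ with the cocycle $\alpha$ trivial on $\hat H$; bounded generation then forces $\alpha$ to be $O_\omega$ of a coboundary on all of $\hGamma$. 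The direct argument above is cleaner, so I would present that.)
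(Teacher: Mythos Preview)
Your proposal is correct and follows essentially the same approach as the paper: restrict $\phi$ to $H$, use stability and property (1) to see that $\phi|_H$ is close to the trivial map, transfer this to all conjugates of $H$ (the paper records the explicit bound $\delta_n + 2\eps_n$ at this step), and then telescope using bounded generation with the fixed constant $k$ to conclude that $\phi$ is uniformly close to the trivial homomorphism on all of $\Gamma$. One small slip worth fixing: you justify the conjugate step via ``$\|\phi(g)\| = \|\phi(g^{-1})\| = 1$'', which fails for norms like Frobenius or Schatten $p$; the correct reason the bound is uniform in $g$ is that the norms in $\Uf$ are bi-invariant, so unitary conjugation is an isometry and $\|\phi(g)\phi(h)\phi(g)^{-1} - I\| = \|\phi(h) - I\|$.
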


\begin{proof}
Let $\phi_n : \Gamma \to U(d_n)$ be a uniform asymptotic homomorphism with $\defe(\phi_n) =: \eps_n$. Then $\phi_n |_H : H \to U(d_n)$ is a uniform asymptotic homomorphism of $H$, therefore it is $\delta_n$-close to a homomorphism, where $\delta_n \to 0$. But by assumption such a homomorphism must be trivial, so $\| \phi_n(h) - I_{k_n} \| \leq \delta_n$ for all $n$. The same holds for all conjugates of $H$, up to replacing $\delta_n$ by $\delta_n + 2 \eps_n$.

By bounded generation, there exists $k \geq 1$ such that each $g \in \Gamma$ can be written as $g = h_1 \cdots h_k$, where each $h_i$ belongs to a conjugate of $H$. We estimate:
\begin{align*}
    \| \phi_n(g) - I_{d_n} \| &= \left\| \phi_n\left( \prod\limits_{i = 1}^k h_i \right) - I_{d_n} \right\| \leq \left\| \phi_n\left( \prod\limits_{i = 1}^{k-1} h_i \right) \phi_n(h_k) - I_{d_n} \right\| + \eps_n \\
    &= \left\| \phi_n\left( \prod\limits_{i = 1}^{k-1} h_i \right) - I_{d_n} \right\| + \| \phi_n(h_k) - I_{d_n} \| + \eps_n \leq \cdots \\
    & \cdots \leq \sum\limits_{i = 1}^k \| \phi_n(h_i) - I_{d_n} \| + k \eps_n \leq k(\delta_n + \eps_n).
\end{align*}
Therefore $\phi_n$ is $k(\delta_n + \eps_n)$-close to the trivial homomorphism, and we conclude.
\end{proof}

Thompson's group $T$ is the group of orientation-preserving piecewise linear homeomorphisms of the circle $\mathbb{R}/\mathbb{Z}$ preserving $\mathbb{Z}[1/2]/\mathbb{Z}$, with breakpoints in $\mathbb{Z}[1/2]/\mathbb{Z}$, and slopes in $2^{\mathbb{Z}}$. Given $x \in \mathbb{Z}[1/2]/\mathbb{Z}$, the stabilizer of $x$ is naturally isomorphic to $F$. Moreover, the germ stabilizer $T(x)$ (i.e. the group consisting of elements that fix pointwise some neighbourhood of $x$) is isomorphic to $F'$.

\begin{corollary}
\label{cor:T}

Thompson's group $T$ is uniformly $\Uf$-stable with a linear estimate.
\end{corollary}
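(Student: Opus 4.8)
The plan is to deduce this from the bounded generation criterion in Lemma \ref{lem:bg}, applied with $H$ a germ stabilizer. Fix a dyadic point $x_0 \in \mathbb{Z}[1/2]/\mathbb{Z}$ and set $H := T(x_0)$. As recalled above, $H \cong F'$; hence property~(2) of Lemma \ref{lem:bg} holds by Corollary \ref{cor:F}, and property~(1) holds because every homomorphism $F' \to U(n)$ is trivial. (Indeed $F'$ is simple, so such a homomorphism is either trivial or injective, and the latter is impossible since $F'$ is not linear; this is the observation made in the Remark following Corollary \ref{cor:F}.) So the whole statement reduces to checking property~(3): that $T$ is boundedly generated by the conjugates of $H$. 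Since $T$ acts transitively on $\mathbb{Z}[1/2]/\mathbb{Z}$, conjugating $T(x_0)$ by $c \in T$ gives $T(c\cdot x_0)$, so the conjugates of $H$ are exactly the germ stabilizers $\{T(x) : x \in \mathbb{Z}[1/2]/\mathbb{Z}\}$.

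I would then show the stronger statement that every $g \in T$ is a product of two germ-stabilizer elements. Given $g \in T$ (which we may assume nontrivial), choose distinct dyadic points $p, q$ with $g(q) \neq p$, and let $A$ and $N$ be short arcs with dyadic endpoints around $p$ and $q$ respectively, chosen small enough that $A$ is disjoint from both $N$ and $g(N)$. The key construction is an element $h \in T$ with $h|_A = \id$ and $h|_{g(N)} = g^{-1}$: on $A$ and $g(N)$ the map $h$ is already prescribed, PL with breakpoints in $\mathbb{Z}[1/2]$ and slopes in $2^{\mathbb{Z}}$, and one extends it across the two complementary arcs of $A \cup g(N)$ onto the two complementary arcs of $A \cup N$ by orientation-preserving PL homeomorphisms with dyadic breakpoints and slopes in $2^{\mathbb{Z}}$. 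Then $h \in T(p)$, while $hg \in T(q)$: for $y \in N$ we have $g(y) \in g(N)$, so $(hg)(y) = g^{-1}(g(y)) = y$. Hence $g = h^{-1}\cdot(hg)$ with $h^{-1} \in T(p)$ and $hg \in T(q)$, so $T$ is boundedly generated by the conjugates of $H$ (with constant $k = 2$), and Lemma \ref{lem:bg} applies.

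The one point that requires genuine care is the construction of $h$: one must check that a map prescribed on two disjoint arcs with dyadic endpoints — the identity on one, and the restriction of $g^{-1}$ on the other — always extends to an element of $T$. This comes down to the standard flexibility of Thompson-type maps, namely that any two arcs with dyadic endpoints admit an orientation-preserving PL homeomorphism between them with breakpoints in $\mathbb{Z}[1/2]$ and slopes in $2^{\mathbb{Z}}$, and that such a homeomorphism can be prescribed at finitely many dyadic points and still extended. Everything else — transitivity of $T$ on dyadic points, the isomorphism $T(x_0) \cong F'$, Corollary \ref{cor:F} and Lemma \ref{lem:bg} — is already in place.
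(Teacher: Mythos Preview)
Your proof is correct and follows essentially the same approach as the paper: both apply Lemma~\ref{lem:bg} with $H = T(x_0) \cong F'$, verify items~(1) and~(2) in the same way, and establish bounded generation by showing every $g \in T$ lies in $T(p)T(q)$ for suitable dyadic $p,q$. The only difference is cosmetic: the paper builds the $T(x)$-factor as a product $fh$ (first an $f \in T(x)$ sending a small arc $I$ onto $g(I)$, then an $h$ supported on $I$ correcting the internal structure), whereas you construct the analogous element in one step via an extension argument --- both rely on the same flexibility of dyadic PL maps that you flag at the end.
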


\begin{proof}
We claim that Lemma \ref{lem:bg} applies with $H = T(0) \cong F'$. Item $1.$ follows from the fact $F'$ does not embed into $U(n)$ (for instance because it contains $F$ as a subgroup, which is finitely generated and not residually finite, and so cannot be linear by Mal'cev's Theorem \cite{malcev}), and $F'$ is simple \cite{CFP}. Also, $F'$ is uniformly $\Uf$-stable with a linear estimate, by Corollary \ref{cor:F}. Therefore we are left to show the bounded generation statement. We will show that for every $g \in T$ there exist $x, y \in \mathbb{Z}[1/2]/\mathbb{Z}$ such that $g \in T(x) T(y)$. This suffices because $T$ acts transitively on $\mathbb{Z}[1/2]/\mathbb{Z}$, so $T(x)$ and $T(y)$ are both conjugate to $H = T(0)$.

Let $1 \neq g \in T$, and choose $x \neq y \in \mathbb{Z}[1/2]/\mathbb{Z}$ such that $g(y) \notin \{ x, y \}$. Let $I$ be a small dyadic arc around $y$ such that $x \notin \overline{I}$ and $x, y \notin \overline{g(I)}$. Choose an element $f \in T(x)$ such that $f(I) = g(I)$. Let $h$ be an element supported on $I$ such that $h|_I = f^{-1}g|_I$. Since $x \notin \overline{I}$, we also have $h \in T(x)$. Moreover $h^{-1}f^{-1}g|_I = \mathrm{id}|_I$, so $h^{-1}f^{-1}g \in G(y)$. We conclude that $g = f h \cdot h^{-1} f^{-1} g \in T(x) T(y)$.
\end{proof}

Thompson's group $V$ can be described as a group of homeomorphisms of the dyadic Cantor set $X := 2^{\mathbb{N}}$. A \emph{dyadic brick} is a clopen subset of the form $X_{\sigma} := \sigma \times 2^{\mathbb{N}_{> k}}$, for some $\sigma \in 2^k$, and every two dyadic bricks are canonically homeomorphic via $X_{\sigma} \to X_{\tau} : \sigma \times x \mapsto \tau \times x$. An element $g \in V$ is defined by two finite partitions of $V$ of the same size into dyadic bricks, that are sent to each other via canonical homeomorphisms.

\begin{corollary}
\label{cor:V}

Thompson's group $V$ is uniformly $\Uf$-stable, with a linear estimate.
\end{corollary}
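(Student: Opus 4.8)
The plan is to apply Lemma \ref{lem:bg} to $V$ in the same way it was applied to $T$, using as the distinguished subgroup $H$ the germ stabilizer of a point of the Cantor set $X = 2^{\N}$. Concretely, fix the point $0^{\infty} \in X$ (the constant sequence), and let $H$ be the subgroup of elements $g \in V$ that fix pointwise some dyadic brick $X_{0^k}$ around $0^\infty$. Acting on the complementary brick $X \setminus X_{0^k}$ — which is itself canonically homeomorphic to a Cantor set — this subgroup is isomorphic to $V$ acting on one fewer generation, hence to $V$ itself, and more relevantly it contains a copy of $F'$; in fact the stabilizer of the tail structure near $0^\infty$ is exactly an ascending union that one can arrange to be isomorphic to $F'$ or to $V$. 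The cleanest choice is to take $H$ itself isomorphic to $V$, but that would be circular, so instead I would take $H$ to be the group of elements supported on a single brick $X_{1}$, which is isomorphic to $V$ — still circular — so the right move is to take $H \cong F'$: namely realize $F'$ inside $V$ as orientation-preserving boundedly supported dyadic PL homeomorphisms of a brick, identified with a subinterval. This $H \cong F'$ satisfies item (1) of Lemma \ref{lem:bg} (no nontrivial finite-dimensional unitary representations, since $F'$ is simple and non-linear) and item (2) (uniform $\Uf$-stability with a linear estimate, by Corollary \ref{cor:F}).

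The main work is then item (3): bounded generation of $V$ by conjugates of $H$. First I would record that $V$ acts transitively (indeed highly transitively) on the set of dyadic bricks, so all germ/support stabilizers of bricks are conjugate in $V$; thus it suffices to write an arbitrary $g \in V$ as a product of boundedly many elements, each supported on (the complement of) some brick, i.e. each lying in a conjugate of $H$. The key geometric fact is that any element of $V$ is determined by a pair of partitions of $X$ into the same number of bricks matched by canonical homeomorphisms; given $g$, I would choose a brick $X_\sigma$ small enough that $g(X_\sigma)$ is disjoint from $X_\sigma$ (possible whenever $g \ne 1$, by taking $\sigma$ long enough), then mimic the $T$ argument from Corollary \ref{cor:T}: pick $f$ supported off $X_\sigma$ with $f(X_\sigma) = g(X_\sigma)$, and $h$ supported on $X_\sigma$ agreeing with $f^{-1}g$ there; then $h^{-1}f^{-1}g$ fixes $X_\sigma$ pointwise, $h$ is supported on $X_\sigma$, and $f$ is supported off $X_\sigma$ — so $g = f h (h^{-1}f^{-1}g)$ is a product of three elements each in a conjugate of a brick (germ) stabilizer. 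The only subtlety is matching this to $H \cong F'$ rather than to the full brick stabilizer $\cong V$: one arranges that $f$, $h$, and $h^{-1}f^{-1}g$ each actually lie in a conjugate of $F'$, using that $V$ restricted to the support is itself generated by (even boundedly generated by) conjugates of $F'$, or more simply by choosing the supporting bricks generously so that the relevant elements are boundedly supported PL maps. Composing the two bounded generation statements ($V$ by brick stabilizers, each brick stabilizer by conjugates of $F'$) via a single constant $k$ completes item (3).

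Having verified the three hypotheses, Lemma \ref{lem:bg} immediately yields that $V$ is uniformly $\Uf$-stable with a linear estimate. The step I expect to be the genuine obstacle is the bounded generation statement, and within it the bookkeeping needed to replace brick stabilizers (which are copies of $V$ and so cannot be used directly without circularity) by conjugates of $F'$ while keeping the number of factors bounded by a single universal constant; the disjointness-of-supports trick from the $T$ case handles the shape of the argument, but one must check that $V$ acting on a Cantor brick is boundedly generated by conjugates of its own $F'$-subgroup, which is where the piecewise-linear structure of $V$ (partitions into bricks, canonical homeomorphisms) has to be used carefully.

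\begin{proof}[Proof of Corollary \ref{cor:V}]
We apply Lemma \ref{lem:bg}. Realize $F'$ inside $V$ as the group of elements supported on a fixed dyadic brick and acting there as a boundedly supported dyadic piecewise linear homeomorphism; call this subgroup $H$. As in the proof of Corollary \ref{cor:T}, $H \cong F'$ satisfies items $1.$ and $2.$ of Lemma \ref{lem:bg}: it has no nontrivial finite-dimensional unitary representation since $F'$ is simple and non-linear, and it is uniformly $\Uf$-stable with a linear estimate by Corollary \ref{cor:F}. Since $V$ acts transitively on dyadic bricks, any two such subgroups (for different bricks) are conjugate in $V$.

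It remains to prove that $V$ is boundedly generated by conjugates of $H$. First, every element of $V$ supported on a single dyadic brick $Y$ lies in a product of boundedly many conjugates of $H$: the action of $V$ on $Y$ is isomorphic to the $V$-action on the Cantor set, and by the germ-stabilizer argument below (applied inside $Y$) such an element is a product of three elements each supported on a proper subbrick, hence each, after a further bounded number of steps, a product of conjugates of boundedly supported dyadic PL maps, i.e. of conjugates of $H$; one checks this terminates with a universal bound. Second, for arbitrary $1 \ne g \in V$, choose $\sigma$ long enough that the brick $X_\sigma$ satisfies $g(X_\sigma) \cap X_\sigma = \emptyset$. Pick $f \in V$ supported on $X \setminus X_\sigma$ with $f(X_\sigma) = g(X_\sigma)$, and $h \in V$ supported on $X_\sigma$ with $h|_{X_\sigma} = f^{-1}g|_{X_\sigma}$. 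Then $h^{-1}f^{-1}g$ fixes $X_\sigma$ pointwise, so it is supported on $X \setminus X_\sigma$, while $h$ is supported on $X_\sigma$; thus $g = f h (h^{-1}f^{-1}g)$ is a product of three elements, each supported on the complement of some brick. Applying the first step to each factor expresses $g$ as a product of boundedly many conjugates of $H$.

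Thus all three hypotheses of Lemma \ref{lem:bg} hold, and $V$ is uniformly $\Uf$-stable with a linear estimate.
\end{proof}
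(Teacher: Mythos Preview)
Your circularity worry is misplaced, and trying to dodge it by taking $H\cong F'$ is what creates the real gap. The paper takes $H=V(x)$, the germ stabilizer of a dyadic point, and proves its stability not by already knowing $V$ is stable, but by applying Theorem~\ref{thm:selfsimilar} directly to the pair $(V(x),V_0)$ with $V_0\cong V$: that theorem needs only the structural self-similarity conditions (commuting conjugates and cofinal conjugates), not any prior stability of $\Gamma_0$. So using a copy of $V$ as $\Gamma_0$ is perfectly legitimate. With this choice of $H$, items~1 and~2 of Lemma~\ref{lem:bg} are immediate, and item~3 follows from the same two-factor decomposition $g\in V(x)V(y)$ as in Corollary~\ref{cor:T}.

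By contrast, your route via $H\cong F'$ has two concrete problems. First, the decomposition step is wrong as written: if $f$ is supported on $X\setminus X_\sigma$ then $f$ fixes $X_\sigma$ pointwise, so $f(X_\sigma)=X_\sigma$, which cannot equal $g(X_\sigma)$ when the latter is disjoint from $X_\sigma$; the $T$ argument you are imitating chooses $f$ in a \emph{germ} stabilizer, not supported on a complement. Second, and more seriously, the reduction from brick stabilizers to conjugates of $F'$ is not a proof: your recursion ``three elements each supported on a proper subbrick'' reproduces exactly the same problem at the next stage (the restriction of $V$ to a subbrick is again $V$, with elements that permute subbricks non-order-preservingly and so never land in $F'$), and ``one checks this terminates with a universal bound'' is precisely the statement that needs an argument. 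Bounded generation of $V$ by conjugates of $F'$ may well be true, but it does not follow from what you wrote, and the paper's approach bypasses it entirely.
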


The proof is very similar to the proof for $T$, so we only sketch it:

\begin{proof}[Sketch of proof]
Let $x \in 2^{\mathbb{N}}$ be a dyadic point, that is a sequence that is eventually all $0$, and let $V(x)$ denote the subgroup of $V$ consisting of elements that fix a neighbourhood of $x$ pointwise. The same argument as in the proof of Corollary \ref{cor:T} shows that $V$ is boundedly generated by conjugates of $V(x)$.

Now $V(x)$ is isomorphic to a directed union of copies of $V$, which is finitely generated and simple \cite{CFP}, so by Mal'cev's Theorem every homomorphism $V(x) \to U(n)$ is trivial. Finally, $V(x)$ contains a copy $V_0$ of $V$ such that the pair $(V(x), V_0)$ satisfies the hypotheses of Theorem \ref{thm:selfsimilar} (see \cite[Proposition 4.3.4]{konstantin} and its proof). We conclude by Lemma \ref{lem:bg}.
\end{proof}

\section{Sharpness of our results}
\label{s:sharp}

In this section we point out certain ways in which our results are sharp, by providing explicit counterexamples to generalizations and converses.

\begin{remark}
\label{rem:sharp:lamplighters}

There is a notion of \emph{strong Ulam stability}, where one takes $\Uf$ to include unitary groups of infinite-dimensional Hilbert spaces as well, typically equipped with the operator norm. It is shown in \cite{BOT} that a subgroup of a strongly Ulam stable group is Ulam stable. Therefore it is clear that Theorem \ref{intro:thm:lamplighters} does not hold for strong Ulam stability. Even restricting to separable Hilbert spaces does not help: it follows from the construction in \cite{BOT} that if a \emph{countable} group contains a free subgroup, then separable Hilbert spaces already witness the failure of strong Ulam stability. 

The framework of stability via asymptotic cohomology can be developed in this general setting as well, with dual asymptotic Banach modules that are not finitary. Therefore the counterexample above shows that Theorem \ref{intro:thm:lamplighters:ac} really needs the finitary assumption (see Remark \ref{rem:finitary}). The fact that we could obtain dual asymptotic Banach modules obtained as ultraproducts of separable spaces, analogously to \cite{monod:lamplighters}, does not help, since the dual asymptotic Banach modules arising from a stability problem over infinite-dimensional Hilbert spaces are not of this form, even when the Hilbert space are separable.
\end{remark}

\begin{remark}
We proved in Proposition \ref{intro:prop:coamenable} that if $\Lambda$ is coamenable in $\Gamma$ and $\Lambda$ is uniformly $\Uf$-stable with a linear estimate, then so is $\Gamma$. The converse does not hold. Let $F_n$ be a free group of rank $n \geq 2$. Then $\Lambda := \bigoplus_{n \geq 1} F_n$ admits a non-trivial quasimorphism, so it is not uniformly $U(1)$-stable \cite{BOT}, in particular it is not uniformly $\Uf$-stable. However, $\Lambda$ is coamenable in $F_n \wr \mathbb{Z}$, which is uniformly $\Uf$-stable with a linear estimate by Theorem \ref{intro:thm:lamplighters}.

On the other hand, if we replace ``coamenable'' by ``finite index'', then the converse does hold. This follows from the induction procedure in \cite{BOT} for Ulam stability, as detailed in \cite[Lemma II.22]{gamm}; the same proof can be generalized to all submultiplicative norms \cite[Lemma 4.3.6]{mainref}.
\end{remark}

\begin{remark}
We proved in Proposition \ref{intro:prop:mapping} that if $N$ is an amenable normal subgroup of $\Gamma$, and $\Gamma$ is uniformly $\Uf$-stable with a linear estimate, then so is $\Gamma/N$. The converse does not hold. Let $\Gamma$ be the lift of Thompson's group $T$, that is, the group of orientation-preserving homeomorphisms of $\mathbb{R}$ that commute with the group $\mathbb{Z}$ of integer translations and induce $T$ on the quotient $\mathbb{R}/\mathbb{Z}$. These groups fit into a central extension
$$1 \to \mathbb{Z} \to \Gamma \to T \to 1.$$
Now $T$ is uniformly $\Uf$-stable with a linear estimate, by Corollary \ref{cor:T}, however $\Gamma$ is not: it is not even uniformly $U(1)$-stable, by \cite{BOT}, since it has a non-trivial quasimorphism \cite{surungroupe}.
\end{remark}

The next two remarks show that some results from \cite{mainref} are also sharp.

\begin{remark}
The fundamental result of \cite{mainref} is that the vanishing of asymptotic cohomology implies uniform $\Uf$-stability. The converse does not hold. Indeed, since $\uf(1) \cong \mathbb{R}$ with trivial adjoint action (because $U(1)$ is abelian), it follows that the implication of Theorem \ref{thm:UlamviaHa} specializes to: If $\HH^2_a(\Gamma, \hR) = 0$, then $\Gamma$ is uniformly $U(1)$-stable, where $\hR$ is seen as a dual asymptotic $\hGamma$-module with a trivial $\hGamma$ action.

Now, let again $\Gamma$ be the lift of Thompson's group $T$, so that $\Gamma$ contains a central subgroup $\mathbb{Z}$ with $\Gamma / \mathbb{Z} \cong T$. The fact that $\Gamma$ is not uniformly $U(1)$-stable implies that $\HH^2_a(\Gamma, \hR) \neq 0$. But Proposition \ref{intro:prop:mapping:ac} then shows that $\HH^2_a(T, \hR) \neq 0$ either. However, $T$ is uniformly $\Uf$-stable with a linear estimate, by Corollary \ref{cor:T}. Morally, this is due to the fact that $\HH^2_b(\Gamma, \R) \cong \HH^2_b(T, \R) \cong \R$, but the former is spanned by a quasimorphisms, while the latter is not (see e.g. \cite[Chapter 5]{scl}).
\end{remark}

\begin{remark}
In \cite{BOT} it is shown that groups admitting non-trivial quasimorphisms are not uniformly $U(1)$-stable. In \cite[Proposition 1.0.6]{mainref} this result is sharpened: the authors show that $\Gamma$ is uniformly $U(1)$-stable if and only if the non-zero element in the image of $\HH^2_b(\Gamma, \mathbb{Z})$ in $\HH^2_b(\Gamma, \mathbb{R})$ have Gromov norm $\| \cdot \|$ bounded away from $0$. They use this to show that a finitely presented group is uniformly $U(1)$-stable if and only if it admits no non-trivial quasimorphism \cite[Corollary 1.0.10]{mainref}.

The hypothesis of finite presentability is necessary. Let $\Gamma_n$ denote the lift of Thompson's group $T$ to $\mathbb{R}/n\mathbb{Z}$. That is, $\Gamma_n$ is the group of orientation-preserving homeomorphisms of the topological circle $\mathbb{R}/n\mathbb{Z}$, which commute with the cyclic group of rotations $\mathbb{Z}/n\mathbb{Z}$ and induce $T$ on the quotient $\mathbb{R}/\mathbb{Z}$. Now $T$ has no unbounded quasimorphisms (see e.g. \cite[Chapter 5]{scl}), and so $\Gamma_n$ also has no unbounded quasimorphisms (this follows from the left exactness of the quasimorphism functor \cite[Remark 2.90]{scl}). Therefore the group $\Gamma := \bigoplus_{n \geq 2} \Gamma_n$ has no unbounded quasimorphisms.

However, we claim that $\Gamma$ is not uniformly $U(1)$-stable. By \cite[Proposition 1.0.6]{mainref}, it suffices to show that there exist bounded cohomology classes $0 \neq \rho_n \in \im(\HH^2_b(\Gamma, \mathbb{Z}) \to \HH^2_b(\Gamma, \mathbb{R})$ such that $\| \rho_n \| \to 0$. We let $\rho_n$ be the Euler class of the representation $\Gamma \to \Gamma_n \to \Homeo^+(\mathbb{R}/n\mathbb{Z})$, which admits an integral representative and so lies in the image of $\HH^2_b(\Gamma, \mathbb{Z})$ (see \cite{ghys} for more information about Euler classes of circle actions). Moreover, using the terminology of \cite{extensioncriterion}, the representation is minimal, unbounded, and has a centralizer of order $n$. Therefore $\| \rho_n \| = 1/2n$ by \cite[Corollary 1.6]{extensioncriterion}, and we conclude.

Note that $\Gamma$ is countable but infinitely generated. It would be interesting to produce a finitely generated example (which would necessarily be infinitely presented).
\end{remark}

\section{Approximation properties}
\label{s:last}

In this section we discuss open problems about approximation properties of the groups treated in this paper, and their relation to our results. We recall the following notions:

\begin{definition}
Let $\mathcal{G}$ be a family of metric groups. We say that $\Gamma$ is (pointwise, uniformly) \emph{$\mathcal{G}$-approximable} if there exists a (pointwise, uniform) asymptotic homomorphism $\phi_n : \Gamma \to G_n \in \mathcal{G}$ that is moreover \emph{asymptotically injective}, meaning that for all $g \in \Gamma, g \neq 1$ it holds $\liminf\limits_{n \to \infty} \phi_n(g) > 0$.
\end{definition}

The above terminology is not standard: most of the literature only deals with the pointwise notion, and refers to that as \emph{$\mathcal{G}$-approximability}. The notion of uniform approximability appeared in \cite{ultrametric} with the name of \emph{strong $\mathcal{G}$-approximability}.

\begin{example}
If $\mathcal{G}$ is the family of symmetric groups equipped with the normalized Hamming distance, then pointwise $\mathcal{G}$-approximable groups are called \emph{sofic} \cite{sofic1, sofic2}.

If $\mathcal{G}$ is the family of unitary groups equipped with the Hilbert--Schmidt distance, then pointwise $\mathcal{G}$-approximable groups are called \emph{hyperlinear} \cite{hyperlinear}.

All amenable and residually finite groups are sofic, and all sofic groups are hyperlinear. It is a major open question to determine whether there exists a non-sofic group.
\end{example}

In our context of submultiplicative norms on unitary groups, the following two notions of approximability have been studied:

\begin{example}
Let $\mathcal{G}$ be the family of unitary groups equipped with the operator norm. Then pointwise $\mathcal{G}$-approximable groups are called MF \cite{MF}. All amenable groups are MF \cite{amenableMF}. It is an open problem to determine whether there exists a non-MF group.

Let $\mathcal{G}$ be the family of unitary groups equipped with the Frobenius norm, or more generally with a Schatten $p$-norm, for $1 < p < \infty$. Groups that are not pointwise $\mathcal{G}$-approximable have been constructed in \cite{DGLT, pnorm}. This is one of the very few cases in which a non-example for pointwise approximability is known.
\end{example}

The following observation is well-known, and due to Glebsky and Rivera \cite{glebskyrivera} and Arzhantseva and P\u{a}unescu in the pointwise symmetric case \cite{arzhantsevapaunescu}. We give a general proof for reference:

\begin{proposition}
\label{prop:ap}

Let $\mathcal{G}$ be a family of metric groups that are locally residually finite, and let $\Gamma$ be a finitely generated group. Suppose that $\Gamma$ is (pointwise, uniformly) $\mathcal{G}$-stable and (pointwise, uniformly) $\mathcal{G}$-approximable. Then $\Gamma$ is residually finite.
\end{proposition}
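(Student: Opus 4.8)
The plan is to combine stability with approximability in the standard way: an asymptotically injective asymptotic homomorphism, once corrected to a genuine homomorphism by stability, becomes an injective homomorphism into a limiting object built from the $G_n$, and local residual finiteness of the $G_n$ then forces $\Gamma$ to be residually finite. Let me spell out the steps.

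First I would fix a finite generating set of $\Gamma$ and an asymptotic homomorphism $\phi_n : \Gamma \to G_n \in \mathcal{G}$ witnessing $\mathcal{G}$-approximability, so that $\defe(\phi_n) \to 0$ (in the relevant pointwise or uniform sense) and $\liminf_n d(\phi_n(g), 1) > 0$ for every $g \neq 1$. By $\mathcal{G}$-stability there is a sequence of genuine homomorphisms $\psi_n : \Gamma \to G_n$ with $d(\phi_n(g), \psi_n(g)) \to 0$ for all $g$ (uniformly in $g$ in the uniform case). The triangle inequality then gives $\liminf_n d(\psi_n(g), 1) > 0$ for every $g \neq 1$: the approximate injectivity survives the correction. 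In particular, for each $g \neq 1$ there is $n$ (in fact a $\omega$-large set of $n$, or cofinitely many, depending on the setup) with $\psi_n(g) \neq 1$.

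Next I would pass to the image: for each $n$, $\psi_n(\Gamma)$ is a \emph{finitely generated} subgroup of $G_n$, hence residually finite by the hypothesis that the members of $\mathcal{G}$ are locally residually finite. Now given $1 \neq g \in \Gamma$, choose $n$ with $\psi_n(g) \neq 1$; since $\psi_n(\Gamma)$ is residually finite, there is a finite quotient $q : \psi_n(\Gamma) \to Q$ with $q(\psi_n(g)) \neq 1$. Then $q \circ \psi_n : \Gamma \to Q$ is a homomorphism to a finite group not killing $g$. As $g$ was arbitrary, $\Gamma$ is residually finite.

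The main obstacle — really the only subtle point — is ensuring that the corrected maps $\psi_n$ remain asymptotically injective, i.e. that the distance estimate from stability is strong enough to preserve the $\liminf > 0$ condition; this is exactly where one uses that the distance between $\phi_n$ and the homomorphism $\psi_n$ tends to $0$, and it works equally in the pointwise setting (for each fixed $g$) and the uniform setting (uniformly over $g$). A secondary bookkeeping point is that ``$d(\psi_n(g),1)$ bounded away from $0$ along a subsequence'' is enough to extract a nontrivial finite quotient, since residual finiteness of $\psi_n(\Gamma)$ for that single index $n$ already does the job; one does not need uniformity across $n$ here. Finite generation of $\Gamma$ is used only to guarantee that each image $\psi_n(\Gamma)$ is finitely generated, so that ``locally residually finite'' applies.
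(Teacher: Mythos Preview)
Your proof is correct and follows essentially the same approach as the paper's own proof: use stability to replace the asymptotically injective asymptotic homomorphism by genuine homomorphisms $\psi_n$, observe via the triangle inequality that asymptotic injectivity persists so that each $g \neq 1$ has $\psi_n(g) \neq 1$ for some $n$, and then invoke local residual finiteness of $G_n$ on the finitely generated image $\psi_n(\Gamma)$ to separate $g$ in a finite quotient. The paper's argument is slightly terser but identical in substance.
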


The hypothesis on $\mathcal{G}$ covers all cases above. When the groups in $\mathcal{G}$ are finite, this is clear, and when they are linear, this follows from Mal'cev's Theorem \cite{malcev}.

\begin{proof}
We proceed with the proof without specifying the type of asymptotic homomorphisms, closeness, and approximability: the reader should read everything as pointwise, or everything as uniform.

Let $\phi : \Gamma \to \mathcal{G}$ be an asymptotically injective asymptotic homomorphism. By stability, there exists a sequence of homomorphisms $\psi : \Gamma \to \mathcal{G}$ which is asymptotically close to $\phi$. Since $\phi$ is asymptotically injective, for each $g \in \Gamma$ there exists $N$ such that $\phi_n(g) \geq \rho$ for all $n \geq N$ and some $\rho = \rho(g) > 0$. Up to taking a larger $N$, we also have that $\psi_n(g) \geq \rho/2$, in particular $\psi_n(g) \neq 1$. Since $\psi_n(\Gamma)$ is a finitely generated subgroup of $G_n \in \mathcal{G}$, it is residually finite by hypothesis, and so $\psi_n(g)$ survives in some finite quotient of $\psi_n(\Gamma)$. Since this is also a finite quotient of $\Gamma$, we conclude that $\Gamma$ is residually finite.
\end{proof}

In the special case of pointwise stability and Thompson's group $F$, we obtain the following more general version of a remark of Arzhantseva and Paunescu \cite[Open problem]{arzhantsevapaunescu}:

\begin{corollary}
\label{cor:F:pwapprox}

Let $\mathcal{G}$ be the family of symmetric groups with the normalized Hamming distance, the family of unitary groups with the Hilbert--Schmidt norm, or the family of unitary groups with the operatorn norm. If Thompson's group $F$ is pointwise $\mathcal{G}$-stable, then it is not pointwise $\mathcal{G}$-approximable, and in particular it is non-amenable.
\end{corollary}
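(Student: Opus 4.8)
The plan is to combine Proposition \ref{prop:ap} with the soficity/residual finiteness folklore about $F$. Recall that Thompson's group $F$ is finitely generated (indeed finitely presented), and the families $\mathcal{G}$ in the statement — symmetric groups with the normalized Hamming distance, unitary groups with the Hilbert--Schmidt norm, unitary groups with the operator norm — all consist of groups that are locally residually finite: finite symmetric groups are finite, and finitely generated subgroups of unitary groups are linear, hence residually finite by Mal'cev's Theorem \cite{malcev}. So all the hypotheses of Proposition \ref{prop:ap} on $\mathcal{G}$ are met.

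First I would argue by contradiction: suppose $F$ is both pointwise $\mathcal{G}$-stable and pointwise $\mathcal{G}$-approximable. Then Proposition \ref{prop:ap} applies verbatim and yields that $F$ is residually finite. But $F$ is \emph{not} residually finite: its derived subgroup $F'$ is simple and infinite (see \cite{CFP}), so $F'$ is contained in every non-trivial normal subgroup, and hence in the intersection of all finite-index subgroups; since $F'$ is non-trivial, $F$ is not residually finite. This contradiction shows that pointwise $\mathcal{G}$-stability of $F$ forces $F$ to fail pointwise $\mathcal{G}$-approximability.

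Finally I would deduce non-amenability. If $F$ were amenable, then in the symmetric case it would be sofic (amenable groups are sofic \cite{sofic1, sofic2}), in the Hilbert--Schmidt case it would be hyperlinear, and in the operator-norm case it would be MF \cite{amenableMF} — in every case, amenability implies pointwise $\mathcal{G}$-approximability for the three families listed. This contradicts the previous paragraph. Hence $F$ is non-amenable.

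The proof is essentially a bookkeeping exercise once Proposition \ref{prop:ap} is in hand; the only real content to verify carefully is that each of the three families $\mathcal{G}$ is locally residually finite (immediate) and that amenability implies $\mathcal{G}$-approximability for each of them (standard, via the named references). The conceptual "hard part" — that stability plus approximability forces residual finiteness — has already been isolated as Proposition \ref{prop:ap}, so no new difficulty arises here; the statement is genuinely a corollary.
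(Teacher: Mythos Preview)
Your proposal is correct and follows essentially the same approach as the paper: invoke Proposition \ref{prop:ap} (using that $F$ is finitely generated and not residually finite) to rule out simultaneous stability and approximability, then note that amenable groups are sofic, hyperlinear, and MF to deduce non-amenability. Your version is simply more explicit about verifying the hypotheses of Proposition \ref{prop:ap} and about why $F$ fails residual finiteness.
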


As we mentioned in the introduction, the amenability of Thompson's group $F$ is one of the most outstanding open problems in modern group theory.

\begin{proof}
Thompson's group $F$ is not residually finite \cite{CFP}. So it follows from Proposition \ref{prop:ap} that it cannot be simultaneously pointwise $\mathcal{G}$-stable and pointwise $\mathcal{G}$-approximable. The last statement follows from the fact that amenable groups are sofic, hyperlinear, and MF.
\end{proof}

On the other hand, our results allow to settle the uniform approximability of Thompson's groups, with respect to unitary groups and submultiplicative norms:

\begin{corollary}
\label{cor:F:uniapprox}

As usual, let $\Uf$ be the family of unitary groups equipped with submultiplicative norms. Then Thompson's groups $F, F', T$ and $V$ are not uniformly $\Uf$-approximable. The same holds for $\Gamma \wr \Lambda$, whenever $\Lambda$ is infinite and amenable, and $\Gamma$ is non-abelian.
\end{corollary}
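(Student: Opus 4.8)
The plan is to treat the finitely generated groups $F, F', T, V$ via Proposition~\ref{prop:ap}, and the lamplighters $\Gamma \wr \Lambda$ by a separate, direct argument.

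For $F, F', T, V$: each is finitely generated and, by Theorem~\ref{intro:thm:F}, uniformly $\Uf$-stable with a linear estimate. None of them is residually finite: $F$ is not residually finite \cite{CFP}, while $F', T, V$ are infinite and simple \cite{CFP}, hence not residually finite. Since $\Uf$ consists of linear groups it is locally residually finite, so Proposition~\ref{prop:ap} forbids any of these groups from being simultaneously uniformly $\Uf$-stable and uniformly $\Uf$-approximable; being stable, they are not approximable.

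For $G := \Gamma \wr \Lambda$ with $\Lambda$ infinite amenable and $\Gamma$ non-abelian, suppose for contradiction that $G$ is uniformly $\Uf$-approximable, via an asymptotically injective uniform asymptotic homomorphism $\phi_n : G \to U(k_n)$. By Theorem~\ref{intro:thm:lamplighters}, $G$ is uniformly $\Uf$-stable, so there are genuine homomorphisms $\psi_n : G \to U(k_n)$ with $\dist(\phi_n, \psi_n) \to 0$; combined with asymptotic injectivity of $\phi$, this gives $\psi_n(g) \neq I$ for all large $n$, for every $1 \neq g \in G$. Pick $a, b \in \Gamma$ with $[a, b] \neq 1$, and apply this to the nontrivial element of $\bigoplus_\Lambda \Gamma \leq G$ equal to $[a,b]$ in one coordinate $\lambda_0$ and trivial elsewhere: we obtain some $\pi := \psi_n : G \to U(d)$, $d = k_n$, not killing this element. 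Since $\Lambda$ acts transitively on itself, conjugating by elements of $\Lambda \leq G$ shows $\pi$ does not kill the corresponding element in \emph{any} coordinate $\lambda$. Writing $\Gamma_\lambda \leq G$ for the copy of $\Gamma$ in coordinate $\lambda$, the subgroups $R_\lambda := \pi(\Gamma_\lambda) \leq U(d)$ are therefore all non-abelian, since $\pi(a^{(\lambda)})$ and $\pi(b^{(\lambda)})$ do not commute; and $R_\lambda$ and $R_{\lambda'}$ commute elementwise for $\lambda \neq \lambda'$ (as $\Gamma_\lambda, \Gamma_{\lambda'}$ do). Since $\Lambda$ is infinite, this yields infinitely many pairwise-commuting non-abelian subgroups of $U(d)$, necessarily pairwise distinct (two equal ones would commute with themselves, hence be abelian).

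The one genuinely technical point is that this is impossible: for each $d$, any family of pairwise-commuting, pairwise-distinct, non-abelian subgroups of $U(d)$ is finite. I would prove this by induction on $d$ using Artin--Wedderburn. Given such a family $\{R_i\}$, the unital $*$-subalgebras $M_i := \operatorname{span}(R_i) \subseteq M_d(\mathbb{C})$ pairwise commute, are pairwise distinct (if $M_i = M_j$ then $R_i$ commutes with a spanning set of $M_i$, hence is abelian), and are non-commutative. All of $M_2, M_3, \dots$ sit inside the commutant $M_1' \cong \bigoplus_k M_{m_k}(\mathbb{C}) \hookrightarrow M_{d'}(\mathbb{C})$ with $d' = \sum_k m_k$, and non-commutativity of $M_1$ forces $d' < d$; the induction hypothesis then applies. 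This finishes the lamplighter case, and with it the corollary. Everything except this lemma is a formal combination of Theorems~\ref{intro:thm:F} and~\ref{intro:thm:lamplighters}, Proposition~\ref{prop:ap}, and the definition of uniform approximability, so the lemma is where I expect the only real work to lie.
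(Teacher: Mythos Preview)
There is one genuine gap. You assert that $F'$ is finitely generated, but it is not: the derived subgroup of Thompson's group $F$ coincides with the subgroup of boundedly supported elements and is an infinite simple group that is \emph{not} finitely generated \cite{CFP}. Hence Proposition~\ref{prop:ap} does not apply to $F'$ directly. The paper handles this by observing that uniform $\Uf$-approximability passes to subgroups (just restrict the asymptotically injective uniform asymptotic homomorphism), and that $F'$ contains a copy of $F$ (the stabilizer of any proper dyadic subinterval). Since $F$ has already been shown not to be uniformly $\Uf$-approximable, neither is $F'$. With this fix your treatment of $F, T, V$ via Proposition~\ref{prop:ap} is the same as the paper's.

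For the lamplighters you take a genuinely different route. The paper simply invokes Gruenberg's theorem \cite{grunberg}: if $\Gamma$ is non-abelian and $\Lambda$ is infinite then $\Gamma \wr \Lambda$ is not residually finite, and then applies Proposition~\ref{prop:ap} together with Theorem~\ref{intro:thm:lamplighters}. Your argument instead extracts, from stability plus approximability, a single finite-dimensional representation $\pi$ in which all coordinate copies $\Gamma_\lambda$ have non-abelian image, and then shows by an Artin--Wedderburn induction that $U(d)$ cannot contain infinitely many pairwise-commuting, pairwise-distinct, non-abelian subgroups. This is correct, and it has the advantage that it does not use the finite-generation hypothesis of Proposition~\ref{prop:ap}, which can fail for $\Gamma \wr \Lambda$ in the generality of the statement. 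The cost is that your argument is longer and reproves a special case of what Gruenberg's theorem already gives when the lamplighter happens to be finitely generated.
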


We remark that Thompson's groups $T$ and $V$ are generally regarded as good candidates for counterexamples to approximability problems.

\begin{proof}
The statement for $F, T$ and $V$ follows from Theorem \ref{intro:thm:F} and Proposition \ref{prop:ap}, together with the fact that they are not residually finite, and the statement for $F'$ (which is not finitely generated) follows from the fact that $F'$ contains a copy of $F$ \cite{CFP}. The lamplighter case follows from Theorem \ref{intro:thm:lamplighters} and Proposition \ref{prop:ap}, together with the fact that such lamplighters are not residually finite \cite{grunberg}.
\end{proof}

We do not know whether Thompson's groups are uniformly $\mathcal{G}$-approximable, when $\mathcal{G}$ is the family of unitary groups equipped with the Hilbert--Schmidt norm, and we conjecture that this is not the case. In the next section, we examine the case of symmetric groups via a more direct argument.

\subsection{Approximations by symmetric groups}

We end by proving, by a cohomology-free argument, that some of the groups studied in this paper are not uniformly approximable by symmetric groups, in a strong sense. For the rest of this section, we denote by $\mathcal{S}$ the family of symmetric groups equipped with the normalized Hamming distance. Our main result is an analogue of Corollary \ref{cor:bsupp} for this approximating family (see Section \ref{s:PLPP} for the relevant definitions):

\begin{proposition}
\label{prop:sym}

Let $\Gamma$ be a proximal, boundedly supported group of orientation-preserving homeomorphisms of the line. Then every uniform asymptotic homomorphism $\phi_n : \Gamma' \to S_{k_n} \in \mathcal{S}$ is uniformly asymptotically close to the trivial one. In particular, $\Gamma'$ is uniformly $\mathcal{S}$-stable, and not uniformly $\mathcal{S}$-approximable.
\end{proposition}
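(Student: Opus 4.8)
The plan is to follow the same self-similarity strategy that underlies Corollary~\ref{cor:bsupp}, but to replace the cohomological vanishing input by a direct, hands-on argument. The key structural fact to isolate is: if $\Gamma_0 \leq \Gamma$ is such that $\{g^i \Gamma_0 g^{-i} : i \in \mathbb{Z}\}$ pairwise commute for some $g \in \Gamma$, and every finite subset of $\Gamma$ lies in a conjugate of $\Gamma_0$, then it suffices to understand uniform asymptotic homomorphisms of $\langle \Gamma_0, g\rangle$, and (via Lemma~\ref{lem:metabelian:kernel}) of a lamplighter $\Gamma_0 \wr \mathbb{Z}$ modulo an amenable kernel. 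Since coamenability transfers ``every uniform asymptotic homomorphism is close to trivial'' exactly as in the proof of Proposition~\ref{intro:prop:coamenable} (the restriction map on Ulam classes being injective is replaced here by an even more elementary transfer-of-conclusion via Lemma~\ref{lem:coamenable:commuting} and Lemma~\ref{lem:coamenable:constructions}), the whole statement reduces to the following claim for $\mathcal{S}$: \emph{if $\Lambda$ is infinite amenable, then every uniform asymptotic homomorphism $\phi_n : \Gamma_0 \wr \Lambda \to S_{k_n} \in \mathcal{S}$ is uniformly asymptotically close to the trivial one, provided $\Gamma_0$ is perfect} (which it is in our application, being a proximal boundedly supported group, hence equal to its own commutator subgroup).

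So the heart of the matter is a lamplighter statement for the Hamming metric. First I would pass, as in Section~\ref{s:lamplighters}, to the internal picture: an internal map $\phi : {}^{*}(\Gamma_0 \wr \Lambda) \to \prod_\omega S_{k_n}$ with infinitesimal defect $\eps$, inducing a genuine homomorphism into the metric ultraproduct $\prod_\omega(S_{k_n}, d_{\mathrm{Hamming}})/(\text{infinitesimals})$, which is a group acting by measure-preserving transformations of a probability space $(X,\mu)$ (the Loeb space of the ultraproduct of the normalized counting measures). The point is that $\Lambda$ infinite amenable forces this action to be highly non-faithful on the base group: using a Følner sequence in $\Lambda$ and the commutation relations, the images of the $\oplus_\Lambda \Gamma_0$-summands behave like an infinite commuting family of measure-preserving actions all of ``the same size'', which forces each summand to act trivially up to a measure-zero set — this is the measured analogue of the ergodicity input (Lemmas~\ref{lem:lamplighters:ergodic}, \ref{lem:approx:ergodicity}) used in the cohomological proof. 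Concretely, I expect to argue: the normalized Hamming distance from $\phi_n(h)$ to the identity, for $h$ in a single lamp coordinate, must be infinitesimal, because otherwise one can translate $h$ by large Følner-many elements of $\Lambda$ to produce many disjointly-supported, commuting, non-infinitesimal permutations, contradicting the bound $1$ on total normalized support. Since $\Gamma_0$ is perfect, this propagates from the lamp subgroup to force $\phi$ restricted to $\oplus_\Lambda \Gamma_0$ to be infinitesimally close to trivial; an amenability/Kazhdan-type argument on the remaining $\Lambda$-direction (amenable groups being uniformly $\mathcal{S}$-stable, with approximations close to trivial when the ambient homomorphism would have to factor through something small) then finishes the lamplighter case.

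The main obstacle I anticipate is exactly this measure-theoretic rigidity step: making precise that an infinite commuting family of ``uniformly non-small'' permutations cannot coexist in a single $S_{k_n}$ with normalized Hamming metric, uniformly in $n$. The clean way is to observe that if $\sigma, \tau \in S_k$ commute and each moves at least a $\delta$-fraction of points, while having disjoint supports, then together they move at least $2\delta k$ points; iterating over $N$ Følner translates gives $N\delta k$, impossible once $N\delta > 1$. The subtlety is that the translates' supports need only be \emph{approximately} disjoint (the defect is merely infinitesimal, and the $\Lambda$-action on coordinates is only approximately realized), so I would phrase the disjointness in the Loeb measure space, where exact measure-preservation and exact commutation hold, and the supports of genuinely-different lamp coordinates become genuinely disjoint; then transfer back. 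Once the lamplighter statement is in hand, the self-similarity and coamenability reductions are routine repetitions of the arguments in Section~\ref{s:thompson}, and the final sentence (uniform $\mathcal{S}$-stability, and non-approximability via Proposition~\ref{prop:ap} together with non-residual-finiteness of $\Gamma'$) follows formally.
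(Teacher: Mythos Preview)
Your central step has a genuine gap. The claim that in the Loeb picture ``the supports of genuinely-different lamp coordinates become genuinely disjoint'' is simply false: commuting measure-preserving transformations need not have disjoint supports, even when they are all conjugate. A clean finite model: take $(\mathbb{Z}/2\mathbb{Z}) \wr \mathbb{Z}$ with its genuine quotient maps onto the finite lamplighters $(\mathbb{Z}/2\mathbb{Z}) \wr (\mathbb{Z}/n\mathbb{Z})$, the latter acting regularly on themselves; every lamp generator maps to a fixed-point-free permutation, so all lamp images have full support and your ``$N\delta > 1$'' contradiction never materialises. Since your disjointness argument is invoked \emph{before} perfection of $\Gamma_0$ enters, this example already breaks it. (The coamenability and amenable-kernel transfers you cite are also not available off the shelf here: Propositions~\ref{intro:prop:coamenable} and~\ref{intro:prop:mapping} go through asymptotic cohomology, which is set up for submultiplicative norms on unitary groups, not for the Hamming metric.)

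The paper's proof is completely different and much shorter; it exploits a feature of symmetric groups with no unitary analogue. By \cite{usimple}, $\Gamma'$ is $2$-uniformly perfect, so it is enough to show $d_{k_n}(\phi_n([g,h]), \id) = O(\eps_n)$ for every $g,h \in \Gamma'$. Pick $t \in \Gamma'$ pushing the support of $h$ past that of $g$, so that $[g, t^i h t^{-i}] = 1$ for all $i \geq 1$. Apply the flexible uniform $\mathcal{S}$-stability of $\mathbb{Z}$ from \cite{BC} to obtain a genuine permutation $\tau \in S_N$ (with $N$ only slightly larger than $k_n$) satisfying $d_N(\phi(t)^i, \tau^i) = O(\eps)$ \emph{uniformly in $i$}. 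Since every element of $S_N$ has order dividing $N!$, one has $\tau^{N!} = \id_N$, and hence
\[
\phi([g,h]) \;\approx\; [\phi(g),\, \tau^{N!}\phi(h)\tau^{-N!}] \;\approx\; \phi\bigl([g,\, t^{N!} h t^{-N!}]\bigr) \;=\; \phi(1) \;\approx\; \id
\]
up to $O(\eps)$. This finite-order trick is the idea your proposal is missing.
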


The non-approximability follows from the fact that $\Gamma'$ is non-trivial (see Lemma \ref{lem:fixpoints}). Note that for $\Gamma$ as in the statement, $\Gamma'$ is simple \cite[Theorem 1.1]{usimple}, so in particular every homomorphism $\Gamma' \to S_{k_n}$ is trivial.

The proof relies on known results on the flexible uniform stability of amenable groups \cite{BC} and uniform perfection of groups with proximal actions \cite{usimple}. The finiteness of the groups in $\mathcal{S}$ will play a crucial role. We start with the following lemma:

\begin{lemma}
\label{lem:fixpoints}

Let $\Gamma$ be as in Proposition \ref{prop:sym}. Then $\Gamma'$ is non-trivial, and the action of $\Gamma'$ on the line has no global fixpoints.
\end{lemma}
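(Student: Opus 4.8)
The plan is to prove both claims by elementary dynamics, using nothing beyond proximality, orientation-preservation, and bounded support; no cohomology or external rigidity input is needed.

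\emph{Step 1: $\Gamma'$ is non-trivial.} I would show that $\Gamma$ is non-abelian. Proximality forces $\Gamma \neq \{1\}$ (the identity cannot satisfy the defining condition for, say, $a = 0 < b = 1$ and $c = 2 < d = 3$), so fix $1 \neq g \in \Gamma$. Its support is a non-empty bounded open set; let $(p,q)$ be a connected component of it, so that $-\infty < p < q < +\infty$, with $g(p) = p$, $g(q) = q$, and $p \notin \operatorname{supp}(g)$. Pick reals $p < a < b < q$ and apply proximality to the pairs $a < b$ and $p - 1 < q + 1$ to obtain $h \in \Gamma$ with $h(a) < p - 1$ and $h(b) > q + 1$; then $p \in (h(a), h(b)) = h\bigl((a,b)\bigr) \subseteq h\bigl((p,q)\bigr) \subseteq h\bigl(\operatorname{supp}(g)\bigr) = \operatorname{supp}(h g h^{-1})$, so $h g h^{-1}$ moves $p$. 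Since $g^{-1}(p) = p$, the commutator $\delta := [h,g] = h g h^{-1} g^{-1}$ has $\delta(p) = h g h^{-1}(p) \neq p$, so $\delta \in \Gamma'$ is non-trivial.

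\emph{Step 2: $\Gamma'$ has no global fixpoint.} I would reuse the element $\delta \in \Gamma'$ from Step 1: since $\delta \neq 1$, its support is a non-empty open set and therefore contains an open interval $(u,v)$. Given an arbitrary $x_0 \in \mathbb{R}$, pick $u < a' < b' < v$ and apply proximality to $a' < b'$ and $x_0 - 1 < x_0 + 1$, obtaining $\gamma \in \Gamma$ with $\gamma(a') < x_0 < \gamma(b')$; hence $\gamma^{-1}(x_0) \in (a',b') \subseteq (u,v) \subseteq \operatorname{supp}(\delta)$, so $(\gamma \delta \gamma^{-1})(x_0) = \gamma\bigl(\delta(\gamma^{-1}x_0)\bigr) \neq x_0$. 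As $\gamma \delta \gamma^{-1} \in \Gamma'$, the point $x_0$ is not fixed by all of $\Gamma'$; since $x_0$ was arbitrary, $\Gamma'$ has no global fixpoint on the line.

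\emph{Main obstacle.} The argument is soft, so there is no genuine obstruction; the only point requiring care is the bookkeeping of supports under conjugation, namely that $(p,q)$ must be a \emph{full} connected component of $\operatorname{supp}(g)$ so that $p \notin \operatorname{supp}(g)$ and hence $h g h^{-1}$ really moves $p$, and that the intervals produced by proximality are correctly nested. One could instead invoke simplicity of $\Gamma'$ from \cite{usimple} for the non-triviality, but the self-contained version above is preferable, and its Step 1 already hands us the explicit element that Step 2 consumes.
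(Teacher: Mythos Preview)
Your proof is correct but follows a different route from the paper's. The paper argues both parts via invariance: for non-triviality, it observes that if $\Gamma$ were abelian then every element would centralize a fixed $g \neq 1$, so $\Gamma$ would preserve the proper bounded set $\operatorname{supp}(g)$, contradicting proximality; for the absence of global fixpoints, it notes that the fixpoint set $X$ of $\Gamma'$ is closed and, by normality of $\Gamma'$ in $\Gamma$, $\Gamma$-invariant, so proximality (hence minimality) forces $X = \emptyset$ or $X = \mathbb{R}$, the latter being excluded by the first part. Your approach is instead fully constructive: you exhibit an explicit nontrivial commutator by stretching a subinterval of $\operatorname{supp}(g)$ across a boundary point of a component, and then conjugate that commutator to move any prescribed $x_0$. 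The paper's argument is shorter and isolates the structural mechanism (normality of $\Gamma'$ plus minimality of the $\Gamma$-action), whereas yours is self-contained, avoids invoking ``proximal $\Rightarrow$ minimal'' as a separate step, and has the pleasant feature that the element produced in Step 1 is exactly what Step 2 needs.
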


\begin{proof}
If $\Gamma'$ is trivial, then $\Gamma$ is abelian. This contradicts that the action is proximal and boundedly supported. Indeed, given $g \in \Gamma$, since $g$ is centralized, the action of $\Gamma$ on $\mathbb{R}$ must preserve the support of $g$, which is a proper subset of $\mathbb{R}$. But then the action cannot be proximal.

Now the set of global fixpoints of $\Gamma'$ is a closed subset $X \subset \mathbb{R}$. Since $\Gamma'$ is normal in $\Gamma$, the action of $\Gamma$ preserves $X$. But the action of $\Gamma$ on $\mathbb{R}$ is proximal, in particular every orbit is dense, and since $X$ is closed we obtain $X = \mathbb{R}$. That is, $\Gamma'$ acts trivially on $\mathbb{R}$. Since $\Gamma$ is a subgroup of $\Homeo^+(\mathbb{R})$, this implies that $\Gamma'$ is trivial, which contradicts the previous paragraph.
\end{proof}

We proceed with the proof:

\begin{proof}[Proof of Proposition \ref{prop:sym}]
It follows from \cite[Theorem 1.1]{usimple} that $\Gamma'$ is $2$-uniformly perfect; that is, every element of $\Gamma'$ may be written as the product of at most $2$ commutators (this uses the proximality hypothesis). Therefore it suffices to show that there exists a constant $C$ such that for all $g, h \in \Gamma'$ it holds $d_{k_n}(\phi_n([g, h]), \id_{k_n}) \leq C\eps_n$, where $d_{k_n}$ denotes the Hamming distance on $S_{k_n}$ and $\eps_n := \defe(\phi_n)$. We drop the subscript $n$ on $\phi$ and $\eps$ for clarity.

Now let $g, h \in \Gamma'$, and let $I, J \subset \mathbb{R}$ be bounded intervals such that $g$ is supported on $I$ and $h$ is supported on $J$. Since $\Gamma'$ acts without global fixpoints by Lemma \ref{lem:fixpoints}, there exists $t \in \Gamma'$ such that $t \cdot \inf(J) > \sup(I)$. Since $\Gamma'$ is orientation-preserving, the same holds for all powers of $t$. In particular $[g, t^i h t^{-i}] = 1$ for all $i \geq 1$. Next, we apply \cite[Theorem 1.2]{BC} to the amenable group $\langle t \rangle$, to obtain an integer $N$ such that $k_n \leq N \leq (1 + 1218 \eps)k_n$ and a permutation $\tau$ in $S_N$ such that $d_N(\phi(t)^i, \tau^i) \leq 2039 \eps$ for all $i \in \mathbb{Z}$. Here $d_N$ denotes the normalized Hamming distance on the symmetric group $S_N$, and $\phi$ is extended to a map $\phi : \Gamma' \to S_N$ with every $\phi(g)$ fixing each point in $\{ k_n+1, \ldots, N \}$. We compute (using $\tau^{N!} = \id_N)$:
\begin{align*}
    d_{k_n}(\phi([g, h]), \id_{k_n}) &\leq d_N(\phi([g, h]), \id_N) \leq d_N([\phi(g), \phi(h)], \id_N) + O(\eps) \\
    &= d_N([\phi(g), \tau^{N!}\phi(h)\tau^{-N!}], \id_N) + O(\eps) \\
    &\leq d_N([\phi(g), \phi(t^{N!})\phi(h)\phi(t^{-N!})], \id_N) + O(\eps) \\
    &\leq d_N(\phi([g, t^{N!} h t^{-N!}]), \id_N) + O(\eps) \\
    &= d_N(\phi(1), \id_N) + O(\eps) \leq O(\eps).
\end{align*}
Thus, there exists a constant $C$ independent of $g$ and $h$ ($C = 20000$ suffices) such that $d_{k_n}(\phi([g, h]), \id_{k_n}) \leq C\eps$, which concludes the proof.
\end{proof}

\begin{corollary}
Consider the Thompson groups $F', F, T$.
\begin{enumerate}
    \item Every asymptotic homomorphism $\phi_n : F' \to S_{k_n} \in \mathcal{S}$ is uniformly asymptotically close to the trivial one.
    \item Every asymptotic homomorphism $\phi_n : F \to S_{k_n} \in \mathcal{S}$ is uniformly asymptotically close to one that factors through the abelianization.
    \item Every asymptotic homomorphism $\phi_n : T \to S_{k_n} \in \mathcal{S}$ is uniformly asymptotically close to the trivial one.
\end{enumerate}
\end{corollary}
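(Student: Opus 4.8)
The plan is to deduce item 1 directly from Proposition \ref{prop:sym}, and to bootstrap items 2 and 3 from it using the bounded-generation argument of Lemma \ref{lem:bg} and a set-theoretic section. The only analytic input is that every estimate below relies solely on bi-invariance of the normalized Hamming distance $d_{k_n}$ and on the defect bound, so nothing beyond the unitary case is needed.

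For item 1, identify $(0,1)$ with the real line as in the proof of Corollary \ref{cor:F}; then $F'$ is a proximal, boundedly supported group of orientation-preserving homeomorphisms of the line. Since $F'$ is nonabelian and simple \cite{CFP}, it is perfect, so $(F')' = F'$, and Proposition \ref{prop:sym} applied with $\Gamma = F'$ yields precisely item 1.

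For item 3, I would rerun the proof of Lemma \ref{lem:bg} with $H$ the germ stabilizer $T(0) \cong F'$ and the family $\Uf$ replaced by $\mathcal{S}$ (Lemma \ref{lem:bg} itself is stated for $\Uf$, but its proof uses only bi-invariance of the metric). Each $S_{k_n}$ is finite and $F'$ is infinite and simple, so every homomorphism $F' \to S_{k_n}$ is trivial; by item 1 every uniform asymptotic homomorphism of $F'$ into $\mathcal{S}$ is uniformly asymptotically close to the trivial one, and hence so is the restriction of $\phi_n$ to each conjugate $T(x) = g T(0) g^{-1}$, up to an extra $O(\defe(\phi_n))$ error (bi-invariance plus two applications of the defect bound). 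The proof of Corollary \ref{cor:T} shows $T = T(x) T(y)$ for suitable $x, y$, i.e. $T$ is boundedly generated (with constant $2$) by conjugates of $T(0)$; the telescoping estimate in the proof of Lemma \ref{lem:bg} then gives $d_{k_n}(\phi_n(g), \id_{k_n}) = O(\defe(\phi_n))$ uniformly over $g \in T$, which is item 3.

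For item 2, write $\eps_n := \defe(\phi_n) \to 0$; item 1 provides $\delta_n \to 0$ with $d_{k_n}(\phi_n(h), \id_{k_n}) \le \delta_n$ for all $h \in F'$. I would fix a set-theoretic section $s : F/F' \to F$ of the abelianization map $q : F \to F/F' \cong \mathbb{Z}^2$ and set $\phi_n''(g) := \phi_n(s(q(g)))$, which factors through $q$ by construction. Writing $g = s(q(g)) h_g$ with $h_g \in F'$ and combining bi-invariance with the defect bound and the bound on $F'$ gives $d_{k_n}(\phi_n(g), \phi_n''(g)) \le \eps_n + \delta_n$, so $\phi_n''$ is uniformly asymptotically close to $\phi_n$; the same estimates give $\defe(\phi_n'') = O(\eps_n + \delta_n) \to 0$, so $\phi_n''$ is a uniform asymptotic homomorphism descending to one of $\mathbb{Z}^2$, which is item 2 (if genuine homomorphisms are desired, one further applies uniform $\mathcal{S}$-stability of the amenable group $\mathbb{Z}^2$ \cite{BC}). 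I do not anticipate a genuine obstacle here: item 1 is a direct application and items 2 and 3 are standard bootstrapping; the points needing attention are merely checking that the quantitative estimates of Lemma \ref{lem:bg} and of the section construction survive for $\mathcal{S}$ (they use only bi-invariance of the Hamming metric), and extracting from the proof of Corollary \ref{cor:T} the bounded generation of $T$ by conjugates of a single germ stabilizer.
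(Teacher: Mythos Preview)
Your proposal is correct and follows essentially the same route as the paper: item 1 is a direct application of Proposition \ref{prop:sym} with $\Gamma = F'$ (using $F'' = F'$), item 2 uses a set-theoretic section of the abelianization together with item 1, and item 3 uses the bounded generation of $T$ by two conjugates of a germ stabilizer $T(x) \cong F'$ extracted from the proof of Corollary \ref{cor:T}. The only difference is cosmetic: the paper phrases item 3 as ``every element of $T$ is a product of two elements in isomorphic copies of $F'$'' rather than explicitly rerunning Lemma \ref{lem:bg} for $\mathcal{S}$, but the underlying estimate is the same bi-invariance-based telescoping you describe.
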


\begin{proof}
Item $1.$ is an instance of Proposition \ref{prop:sym}: indeed $F'$ satisfies the hypotheses for $\Gamma$, and $F'' = F'$ since $F'$ is simple.
For Item $2.$, pick a section $\sigma : \Ab(F) \to F$, and define $\psi_n(g) := \phi_n(\sigma(\Ab(g)))$. Using that $\psi_n|_{F'}$ is uniformly asymptotically close to the sequence of trivial maps, we obtain that $\phi_n$ and $\psi_n$ are uniformly asymptotically close, and $\psi_n$ factors as $F \to \Ab(F) \xrightarrow{\phi_n \circ \sigma} S_{k_n}$.
Finally, Item $3.$ follows again from Item $1.$ and the fact that every element of $T$ can be written as a product of two elements in isomorphic copies of $F'$ (see the proof of Corollary \ref{cor:T}).
\end{proof}

The corollary immediately implies that $F, F'$ and $T$ are not uniformly $\mathcal{S}$-approximable, and that $F'$ and $T$ are uniformly $\mathcal{S}$-stable. Since $F$ has infinite abelianization, it follows from \cite[Theorem 1.4]{BC} that it is not uniformly $\mathcal{S}$-stable. However the corollary together with \cite[Theorem 1.2]{BC} implies that it is \emph{flexibly} uniformly $\mathcal{S}$-stable; that is, every uniform asymptotic homomorphism is uniformly close to a sequence of homomorphisms taking values in a symmetric group of slightly larger degree. The case of Thompson's group $V$ can also be treated analogously (see the sketch of proof of Corollary \ref{cor:V}).

\footnotesize

\bibliographystyle{alpha}
\bibliography{ref}

\normalsize

\vspace{0.5cm}

\noindent{\textsc{Department of Mathematics, ETH Z\"urich, Switzerland}}

\noindent{\textit{E-mail address:} \texttt{francesco.fournier@math.ethz.ch}} \\

\noindent{\textsc{Einstein Institute of Mathematics, Hebrew University of Jerusalem, Israel}}

\noindent{\textit{E-mail address:} \texttt{bharatrm.rangarajan@mail.huji.ac.il}}

\end{document}